\documentclass[11pt,english,leqno]{amsart}

\usepackage[latin1]{inputenc}

\usepackage{csquotes}

\makeatletter
\def\@tocline#1#2#3#4#5#6#7{\relax
  \ifnum #1>\c@tocdepth 
  \else
    \par \addpenalty\@secpenalty\addvspace{#2}%
    \begingroup \hyphenpenalty\@M
    \@ifempty{#4}{%
      \@tempdima\csname r@tocindent\number#1\endcsname\relax
    }{%
      \@tempdima#4\relax
    }%
    \parindent\z@ \leftskip#3\relax \advance\leftskip\@tempdima\relax
    \rightskip\@pnumwidth plus4em \parfillskip-\@pnumwidth
    #5\leavevmode\hskip-\@tempdima
      \ifcase #1
       \or\or \hskip 1em \or \hskip 2em \else \hskip 3em \fi%
      #6\nobreak\relax
    \dotfill\hbox to\@pnumwidth{\@tocpagenum{#7}}\par
    \nobreak
    \endgroup
  \fi}
\makeatother

\usepackage{hyperref}
\usepackage{endnotes}

\usepackage{mathcmd}
\usepackage{amssymb,amsmath, amscd,upgreek, amsthm}
\usepackage{mathrsfs}
\usepackage{euscript}
\usepackage[usenames,dvipsnames]{color}

\usepackage{color}
\usepackage[all,color,matrix,arrow,curve]{xypic}

\textheight 21cm
\textwidth 14.5cm
\setlength{\oddsidemargin}{1cm}
\setlength{\evensidemargin}{1cm}
\setlength{\topmargin}{0.5cm}

\numberwithin{equation}{section}
\bibliographystyle{plain}
\theoremstyle{plain}

\newtheorem{theorem}{Theorem}[section]
\newtheorem{fact}{Fact}
\newtheorem*{factn}{Fact}
\newtheorem{cor}[theorem]{Corollary}
\theoremstyle{plain}
\newtheorem{pro}[theorem]{Proposition}
\newtheorem{lemma}[theorem]{Lemma}
\theoremstyle{remark}
\newtheorem{rem}[theorem]{Remark}

\makeatletter
\@addtoreset{fact}{section}
\makeatother

\def\green#1{\textcolor{Green}{#1}}


\def\PP{{\mathfrak  P}}


\def\A{{\mathcal A}}

\def\C{{\mathbb  C}}
\def\Corps{\mathfrak F}

\def\F{{F}}
\def\G{{\rm G}}
\def\I{{{\tilde{\rm I}}}}

\def\KK{{\rm  K}}
\def\K{\KK}

\def\M{{\rm M}}

\def\N{{\mathbb N}}

\def\P{{\EuScript P}}

\def\R{{\rm R}}
\def\SS{{\rm S}}
\def\T{{\rm T}}

\def\V{{\rm V}}
\def\W{{\rm W}}
\def\X{{\rm X}}

\def\Z{{\mathbb Z}}


\def\Aa{\mathscr{A}}
\def\Bb{\EuScript{B}}
\def\BB{\mathscr{B}}

\def\Cute{\mathscr C}

\def\Dd{\EuScript{D}}

\def\Hh{{\tilde {\rm H}}}

\def\Oo{\mathfrak{O}}

\def\Rr{\mathscr{R}}

\def\Uu{\EuScript{U}}


\usepackage{accents}

\def\Wfg{\underline{\Wf}}
\def\Wg{{\underline\W}}

\def\Root{{\underline\Phi}}
\def\Coroot{\check{{\underline \Phi}}}
\def\Pig{{ \underline\Pi}}
\def\Sg{{\underline S}}

\def\Htg{{\Hh_\Z}}
\def\Ddg{{\underline{\Dd}}}
\def\ellg{\underline\ell}

\def\tg{\t}

\def\b{\beta}

\def\h{\varphi}

\def\m{{\mathfrak m}}

\def\t{\boldsymbol{\tau}}


\def\lp{\langle}
\def\rp{\rangle}
\def\>{\geqslant}
\def\<{\leqslant}


\def\Hom{{\rm Hom}}
\def\EInd{{\rm End}}

\def\GL{{\rm GL}}

\def\Ker{{\rm Ker}}
\def\Im{{\rm Im}}

\def\ind{{\rm ind}}


\def\1{{\bf 1}}

\def\XX{\tilde{\mathbf X}}
\def\Mod{{\rm Mod}(\H)}


\def\Hf{{\mathfrak H}}
\def\H{ {\tilde{\mathfrak H}}}

\def\Wf{\mathfrak W}

\newcommand\cal{\mathcal}


\def\fq{{\mathbb F}_q}

\def\val{\operatorname{\emph{val}}}



\def\c1{\mathbf C _{\bf 1}}

\def\h1{{\Hf_{\bf 1}}}

\def\Weight{{{\rm X}_*}}

\def\coweight{\lambda}

\def\w{{w_0}}
\def\wl{{}^\w\!\coweight}

\def\coroot{{\check\alpha}}

\def\root{{\alpha}}
\def\broot{{\beta}}


\def\beq{\begin{equation}}
\def\eeq{\end{equation}}
\def\barr{\begin{eqnarray}}
\def\earr{\end{eqnarray}}
\def\de{\begin{equation*}}
\def\fe{\end{equation*}}
\def\dt{\begin{eqnarray*}}
\def\ft{\end{eqnarray*}}

\def\Mod{{\rm Mod}}

\def\A{\cal A}





\def\k{k}

\def\Iw{{\rm  I}}

\def\Tp{{\rm T}}

\def\Gp{ { \rm G}}
\def\pr{{\rm  pr}}

\usepackage{comment}

 \theoremstyle{remark}%
  %

\usepackage{hyperref}
\definecolor{webblue}{rgb}{0, 0.7, 0.5}
\definecolor{webred}{rgb}{0.2, 0.3, 0.6} 
\hypersetup{colorlinks,  citecolor=webblue,  linkcolor=webred, urlcolor=black}

\title{Mod $p$ spherical and Iwahori Hecke algebras for $p$-adic ${\rm GL}_n$}
\title{An inverse  Satake isomorphism in characteristic $p$}
\date{\today}
\author{Rachel Ollivier}


\address{Laboratoire de Mathématiques de Versailles, 
 45 avenue des États-Unis 
78035 Versailles cedex, France}

\address{Columbia University, Mathematics, 2990 Broadway, New York, NY 10027, USA}

\email{ollivier@math.columbia.edu}

\date{July 10, 2012}

\subjclass[2010]{20C08, 22E50 }
\linespread{1.1}

\thanks{Partially supported by  NSF grant DMS-1201376 and  project TheHoPad ANR-2011-BS01-005}

\begin{document}

\maketitle

\begin{abstract}  Let $\Corps$ be a   local field with finite residue field of characteristic $p$ and $k$ an algebraic closure of the residue field.   Let $\Gp $ be the group of $\mathfrak{F}$-points of a $\Corps$-split connected reductive group.
In  the  apartment corresponding to a   maximal split torus of $\Tp$,  we choose
a hyperspecial  vertex and denote by
$\K$ the corresponding maximal compact subgroup of $\Gp$.
  Given an   irreducible smooth  $k$-representation $\rho$ of $\K$, we construct an isomorphism from  the affine semigroup algebra $k[\X^+_*(\Tp)]$ of the dominant cocharacters of $\T$ onto the Hecke algebra $\cal H(\Gp, \rho)$. In the case when the derived subgroup of $\Gp$ is simply connected, we prove  furthermore that our isomorphism is the inverse to the Satake isomorphism constructed by Herzig in \cite{satake}.


Our method consists in attaching  to $\rho$  a  commutative subalgebra $\EuScript B_\rho$ of the pro-$p$ Iwahori-Hecke $k$-algebra of $\Gp$  that is isomorphic   to $k[\X^+_*(\Tp)]$.
Using a theorem by  Cabanes  which relates categories of $k$-representations of  parahoric subgroups of $\Gp$ and   Hecke modules (\cite{Cabanes}),
we then construct a natural
isomorphism from    $\EuScript B_\rho$ onto $\cal H(\Gp, \rho)$.

\end{abstract}



\setcounter{tocdepth}{2} 

\tableofcontents

\section{Introduction}

\subsection{Framework\label{lasko}} Let  $\Corps$ be a nonarchimedean locally compact field  with ring of integers $\Oo$, maximal
ideal $\PP$ and residue field  $\mathbb F_q$ where $q$  is a power of  a prime number $p$. We fix a uniformizer $\varpi$ of $\Oo$ and choose the valuation $\val_{\mathfrak{F}}$ on   $\mathfrak{F}$  normalized by $\val_{\mathfrak{F}}(\varpi)=1$. 
Let $\Gp := \mathbf{G}(\mathfrak{F})$ be the group of $\mathfrak{F}$-rational points of a connected reductive group $\mathbf{G}$ over $\mathfrak{F}$ which we assume to be $\mathfrak{F}$-split. 
We fix an  algebraic closure $k$ of $\mathbb F_q$ which is the field of coefficients of (most of) the representations we  consider. All representations of $\Gp$ and its subgroups are  smooth.

Let $\mathscr{X}$ (resp.   $\mathscr X^{ext}$) be the semisimple  (resp. extended)  building of $\Gp$ and  $\rm pr: \mathscr X^{ext}\rightarrow \mathscr X$  the canonical projection map.
We fix a maximal $\Corps$-split torus $\T$ in $\Gp$ which
is equivalent to choosing an apartment $\Aa$ in $\mathscr{X}$  (see \ref{rootG}). We fix
a chamber $C$ in $\Aa$  as well as a hyperspecial vertex $x_0$ of $C$. The stabilizer of $x_0$ in $\Gp$ contains a good maximal compact subgroup $\K$ of $\Gp$ which in turns contains an Iwahori subgroup $\Iw$ that fixes $C$ pointwise. Let $\mathbf{G}_{x_0}$ and $\mathbf{G}_C$ denote the Bruhat-Tits group schemes over $\mathfrak{O}$ whose $\mathfrak{O}$-valued points are $\K$ and $\Iw$ respectively. Their reductions over the residue field $\mathbb{F}_q$ are denoted by $\overline{\mathbf{G}}_{x_0}$ and $\overline{\mathbf{G}}_C$. By \cite[3.4.2, 3.7 and 3.8]{Tit}, 
$\overline{\mathbf{G}}_{x_0}$ is connected  reductive and $\mathbb F_q$-split.
Therefore we have ${\mathbf{G}}_C^\circ(\Oo)={\mathbf{G}}_C(\Oo) = \Iw$ and ${\mathbf{G}}_{x_0}^\circ(\Oo)={\mathbf{G}}_{x_0}(\Oo) = \K$.  
\medskip

Denote  by $\overline{\mathbf{B}}$ the Borel subgroup of $\overline{\mathbf{G}}_{x_0}$ image of 
the natural morphism $\overline{\mathbf{G}}_C \longrightarrow \overline{\mathbf{G}}_{x_0}$ and by 
 $\overline{\mathbf{N}}$  the unipotent radical of $\overline{\mathbf{B}}$ and $\overline{\mathbf{T}}$ its Levi subgroup. Set
\begin{equation*}
    \K_1 := \Ker \big(\mathbf{G}_{x_0}(\mathfrak O) \xrightarrow{\; proj \;} \overline{\mathbf{G}}_{x_0} (\mathbb{F}_q) \big) \quad\textrm{and}\quad \I := \{g \in \K : proj(g) \in \overline{\mathbf{N}}(\mathbb{F}_q) \}.
\end{equation*}
Then we have a chain  $  \K_1 \subseteq \I \subseteq \Iw \subseteq \K$
of compact open subgroups in $\Gp$ such that
\begin{equation*}\K/\K_1 =  \overline{\mathbf{G}}_{x_0} (\mathbb{F}_q) \supseteq \Iw/\K_1 = \overline{\mathbf{B}}(\mathbb{F}_q) \supseteq \I/\K_1 = \overline{\mathbf{N}}(\mathbb{F}_q) \ .
\end{equation*}
The subgroup $\I$ is pro-$p$ and is called the pro-$p$ Iwahori subgroup. It is a maximal  pro-$p$ subgroup in $\K$. The quotient $\Iw/\I$ identifies with $ \overline{\mathbf{T}}(\mathbb{F}_q)$. 



\medskip

\medskip

Let $\XX  := \ind_\I^\Gp(1) $ denote the compact induction 
of the trivial character of $\I$ (with values in $k$). We see it as the space of $k$-valued functions with compact support in $\I\backslash \Gp
$, endowed with the action of $\Gp$ by right translation. The  Hecke  algebra of the $\Gp$-equivariant endormorphisms of $\XX$ will be denoted by $\Hh$. It is a $\k$-algebra. 
 \medskip
 
 \begin{rem}Throughout the article, we will use accented letters such as  $\XX$, $\Hh$, $\H$, $\tilde \W$, $\tilde \X_*(\T)$ even when their non accented versions do not necessarily come into play: in doing so, we want to emphasize the fact that we work with the pro-$p$ Iwahori subgroup $\I$ and the  attached objects. The non accented letters are kept for the classical root data, universal representations, affine Hecke algebra \emph{etc.}  attached to the chosen Iwahori subgroup $\Iw$. \label{rema:tilda}
\end{rem}

\medskip

The algebra $\Hh$ is relatively  well understood: an integral Bernstein basis has been described by Vign\'eras (\cite{Ann})
who underlines the existence of a commutative subalgebra denoted by $\cal A^{+, (1)}$ in $\Hh$ that contains the center  of $\Hh$ and such that
$\Hh$ is finitely generated over $\cal A^{+, (1)}$. 
\medskip

Let $\rho$ be an irreducible $k$-representation of $\K$.  Such an object is called a \emph{weight}. It descends to an irreducible representation of $\overline{\mathbf{G}}_{x_0} (\mathbb{F}_q)$ because $\K_1$ is a pro-$p$ group.  Its compact induction to $\Gp$ is denoted by $\ind_{\K}^\Gp \rho$. The $k$-algebra of the $\Gp$-endomorphisms of the latter is denoted by ${\cal H}(\Gp, \rho)$ and will be called the spherical Hecke algebra attached to $\rho$. In \cite{satake}, Herzig describes the algebra  ${\cal H}(\Gp, \rho)$ (remark that the results of \cite{satake} are equally valid when $\Corps$ has characteristic $p$). He proves in particular that it  is a commutative noetherian algebra. For example, if $\mathbf G={\rm GL}_n$ (for  $n\geq 1$) then ${\cal H}(\Gp, \rho)$  is an algebra of polynomials in $n$ variables localized at one of them (Example 1.6, \emph{loc. cit.}). 
More precisely, for general  $\mathbf G$, let $\X_*(\T)$ denote the set of cocharacters of the split torus $\T$  and $\X_*^+(\T)$ the monoid of the dominant ones, then there is an isomorphism $$\EuScript S: {\cal H}(\Gp, \rho)\overset{\simeq}\longrightarrow k[\X_*^+(\T)]$$ 
given by \cite[Thm 1.2]{satake} (see our remark \ref{rema:normalization} for our choice of the \enquote{dominant} normalization).

\medskip
\subsection{{Results}} 

\subsubsection{}  Let $\rho$ be a weight. We prove independently from \cite{satake} that there is an isomorphism between $k[\X_*^+(\T)]$ and   ${\cal H}(\Gp, \rho)$ 
(depending on the choice of a uniformizer $\varpi$ and of a set of positive roots) by constructing a map in the opposite direction 
\begin{equation}\EuScript T: k[\X_*^+(\T)]\overset{\simeq}\longrightarrow {\cal H}(\Gp, \rho)\end{equation}

\noindent and proving that it is an isomorphism (Theorem \ref{theomy}).  
\medskip

Under the hypothesis that the derived subgroup  of $\mathbf G$ is simply connected, we give in \ref{expli} an explicit description of $\EuScript T$ and prove that it is an inverse for 
$\EuScript S$ which, under the same hypothesis, is explicitly computed in \cite{Parabind}.

\bigskip

Our method to construct $\EuScript T$ is based on the following result: it is well known that there is a one-to-one correspondence between the weights and the characters  of the (finite dimensional) pro-$p$ Iwahori-Hecke algebra $\H$ of the maximal compact $\K$ (\cite{CL}). In fact, we have more than this: by a theorem of Cabanes  (\cite{Cabanes}, recalled in \ref{categories}), there is an equivalence of categories between $\H$-modules and a certain category (denoted here by $\BB(x_0)$) of representations of $\K$. Using this theorem, we prove (Corollary \ref{CoroIso}) that  passing to $\I$-invariant vectors gives  natural  isomorphisms of $k$-algebras

\begin{equation}\label{i0}\cal H (\Gp, \rho )\cong \Hom_\Hh((\ind_{\K }^\Gp \rho)^\I,(\ind_{\K }^\Gp \rho)^\I )\cong\Hom_\Hh(\chi\otimes_\H\Hh,\chi\otimes_\H\Hh )\end{equation}\\
where $\chi$ is the character of $\H$ corresponding to $\rho$. It therefore remains to describe  the $k$-algebra  $\Hom_\Hh(\chi\otimes_\H\Hh,\chi\otimes_\H\Hh )$. The necessary tools are introduced in Section \ref{main}.

\medskip

Let $F$ be a standard facet, that is to say a facet of the standard chamber $C$ containing $x_0$ in its closure.
In Section \ref{main}, we attach to $F$ a Weyl chamber $\Cute^+(F)$ (for example, if $F=C$, it is simply the set of dominant cocharacters) as well as a $k$-linear  \enquote{Bernstein-type} map
$$ \Bb_F^+: k[\tilde\X_*(\T)]\rightarrow \Hh$$ defined on the group algebra of the extended cocharacters $\tilde\X_*(\T)$ (\S\ref{notations:tame} and Remark \ref{rema:tilda}). Restricted to the dominant monoid $k[\tilde\X_*^+(\T)]$, the map $\Bb_F^+$ respects the product and its image is a commutative subalgebra of $\Hh$. For example,  if $F=C$, then $\Bb_C^+$  coincides  on $k[\tilde\X_*^+(\T)]$ with the map carrying a dominant 
cocharacter  onto the  characteristic function of  the corresponding  double coset modulo $\I$, and  the image of $\Bb^+_C$ coincides with the subalgebra $ \mathcal\A^{+, (1)}$ of \cite{Ann}.

\medskip

To the character $\chi$ of $\H$  we attach a  standard facet   $F_\chi$  as well as its  restriction $\bar \chi$ to the finite torus  $\overline {\mathbf T}(\mathbb F_q)$ (\S\ref{param}).
(For example, if $\rho$ is the Steinberg representation, then $F_\chi= C$ and $\bar \chi$ is the trivial character.)
We prove in \ref{subsec:satake} that the map $\Bb_{F_\chi}^+$ induces  an isomorphism  of $k$-algebras

\begin{equation}\begin{array}{ccc}
k[\X_*^+(\T)]\:\cong\:\bar\chi\otimes_{k[\T^0/\T^1]}k[ \tilde \X^+_*(\T)]&\overset{\simeq}\longrightarrow &\Hom_\Hh(\chi\otimes_\H\Hh,\chi\otimes_\H\Hh )\cr\end{array}
\end{equation}   which, combined with \eqref{i0} yields the isomorphism $\EuScript T$.

\subsubsection{}

The classical Satake transform is  an isomorphism (\cite{Sat}, see also \cite{Gross})
$$S: \C[\K\backslash \Gp/\K]\overset{\simeq}\longrightarrow (\C[\X_*(\T)])^\Wf $$ where $\Wf$ denotes the finite Weyl group  corresponding to $\T$.
On the other hand, the  center $\cal Z(\cal H_\C(\Gp, \Iw))$  of the complex Iwahori-Hecke algebra $\cal H_\C(\Gp, \Iw)$ was described
by Bernstein (see  \cite{LuSing}, \cite{Lu}):  it is isomorphic to the algebra of $\Wf$-invariants $(\C[\X_*(\T)])^\Wf$. 
By  \cite[Proposition 10.1]{Haines} (see also \cite[Corollary 3.1]{Dat}),
 the  \emph{Bernstein isomophism} $$B: (\C[\X_*(\T)])^\Wf\overset{\simeq}\longrightarrow \cal Z(\cal H_\C(\Gp, \Iw))$$
is compatible with $S$ in the sense that 
the composition $(e_\K\star.) B$ is an inverse for $S$, where $(e_\K\star.) $ is the convolution  by the characteristic function of $\K$.
\medskip

The maps $\Bb_F^+$ introduced in the present article are modified (and integral) versions of the Bernstein maps used to define the isomorphism $B$ in the complex case for the Iwahori-Hecke algebra.  Therefore, our  construction of an inverse Satake isomorphism  using commutative subalgebras of the (pro-$p$) Iwahori-Hecke algebra  has hints of   a compatibility between mod $p$ Satake isomorphism and Bernstein maps.  
The   link between spherical Hecke algebras and the \emph{center} of the pro-$p$ Iwahori-Hecke algebras  is  further analyzed in a separate paper (\cite{compa}).

\subsection{Acknowledgments} 
 I  thank  Marc Cabanes and Florian Herzig for their {thorough, helpful comments,  and Michael Harris for  energizing  and insightful conversations over the years.

\section{Affine root systems and associated Hecke rings}
We first give notations and basic results about \enquote{abstract} reduced  affine root systems: in the first paragraph the symbols used are underscored. In \ref{subsec:rootdataG}
(respectively  \ref{rootF}), we will describe some aspects of the construction of the reduced affine root system for $\Gp$ (respectively, for a semi-standard Levi subgroup of $\Gp$) associated to the choice of the torus $\T$. In both the settings of  \ref{subsec:rootdataG} and of
\ref{rootF}, the results of \ref{rootgene} apply. 

\subsection{Affine root system\label{rootgene}}

We refer to \cite[\S 1]{Lu} as a general reference. 
We consider  an affine root system
$(\Root, {\rm X}^* \Coroot, {\rm X}_*)$ where $\Root$ is the set of roots and $\Coroot$ the set of coroots. We suppose that this root system is reduced.
An element of the free abelian group ${\rm X}_*$   is  called a \emph{coweight}.
We denote by $\lp\, .\, ,\,.\rp$ the perfect pairing on ${\rm X}_*\times {\rm X}^*$
and by $\alpha  \leftrightarrow \check\alpha $  the correspondence between roots and coroots satisfying
$\lp \alpha ,\check\alpha \rp=2$. We choose a basis $\Pig$  for $\Root$ and denote by
 $\Root^+$ (resp. $\Root^-$)  the set of roots which are positive (resp. negative) with respect to $\Pig$. There is  a partial order on $\Root$ given by $\root\preceq \broot$ if and only if $\broot-\root$ is a linear combination with (integral) nonnegative c\oe fficients of elements in $\Pi$.

\medskip

To the  root $\check\alpha $ corresponds the  reflection $s_\alpha  : \lambda\mapsto \lambda-\lp \lambda,\root\rp\,\coroot$ defined on $\X_*$. It 
 leaves $\Coroot$
 stable. 
The finite Weyl group $\Wfg$ is the subgroup of ${\rm GL}(\X_*)$ generated by the simple reflections $s_\root$ for  $\root\in\Pi$.     
It is a Coxeter system with generating set $\Sg=\{s_\root,\:\root\in\Pig\}$. We will denote by $(w_0,\lambda)\mapsto \wl$ the natural action of $\Wfg$ on
the set of coweights. It induces a natural action of $\Wfg$ on the weights which stabilizes the set of roots. 
The set $\X_*$ acts on itself by translations: for any coweight $\lambda\in \X_*$,
we denote by $e^\coweight$ the associated translation.
The (extended) Weyl group  $\Wg$ is the semi-direct product  $\Wfg\ltimes\X_*$.  

\medskip
\subsubsection{}  Define the set of affine roots by  ${\Root_{aff}}=\Root\times \Z={\Root_{aff}^+}\coprod {\Root_{aff}^-}$ where
$${\Root_{aff}^+}:=\{(\root, r),\: \root\in\Root, \,r>0\}\cup\{(\root,0),\, \root\in\Root^+\}.$$

The Weyl group  $\Wg$ acts on $\Root_{aff}$ by $we^\lambda: (\root, r)\mapsto (w\root,\,\, r-\lp \lambda, \root\rp)$ where we denote by $(w,\root)\mapsto w\root$ the natural action of $\Wfg$ on the roots.
Denote by $\Pig_m$ the set of roots that are minimal elements  for $\preceq$.
Define the set of simple affine roots by  $\Pig_{aff}:=\{(\root, 0),\: \root\in\Pig\}\cup\{(\root,1),\, \root\in\Pig_m\}$. Identifying $\root$ with $(\root,0)$, we consider $\Pig$ a subset of
$\Pig_{aff}$.  For $A\in  \Pig_{aff}$, denote by $s_A$ the following associated reflection: $s_A=s_\root$ if $A=(\root, 0)$ and  $s_A=s_\root e^{\coroot}$ if $A=(\root,1)$.  The length on the Coxeter system $\Wfg$ extends to $\Wg$ in such a way that,   the length of $w\in \Wg$  
 is the number of affine roots   $A\in{\Root_{aff}^+}$ such that
$w(A)\in { \Root_{aff}^-}$. It satisfies  the following formula, for every $A \in \Pig_{aff}$ and $w\in \Wg$:
\begin{equation}\label{add}
   \ellg(w s_A)= 
   \begin{cases}
       \ellg(w)+1 & \textrm{ if }w (A)\in {\Root_{aff}^+},\\  \ellg(w)-1 & \textrm{ if }w (A)\in {\Root_{aff}^-}.
    \end{cases}
\end{equation}\\

The affine Weyl group is defined as the subgroup
$\Wg_{aff} := \; < s_A , \:\: A \in \Root_{aff}>$
of $\Wg$. Let $  \Sg_{aff} := \{s_A : A \in \Pig_{aff}\}$. The pair $(\Wg_{aff}, \Sg_{aff})$ is a Coxeter system (\cite[V.3.2 Thm.\ 1(i)]{Bki-LA}), and the length function $\ellg$ restricted to $\Wg_{aff}$ coincides with the length function of this Coxeter system. Recall  (\cite[1.5]{Lu}) that $\Wg_{aff}$ is a normal subgroup of $\Wg$:   the set $\Omega $ of elements  with length zero  is an abelian subgroup of $\Wg$ and 
 $\Wg$ is the semi-direct product $\Wg= \Omega \ltimes \Wg_{aff}$.
 The length $\ellg$ is constant on the double cosets $\Omega w \Omega$ for $w \in \Wg$. In particular $\Omega$ normalizes $\Sg_{aff}$.

\medskip

We extend the Bruhat order $\leq $ on the Coxeter system $(\Wg_{aff}, \Sg_{aff})$   to $\Wg$ by defining
\begin{center} $\omega_1 w_1\leq \omega_2 w_2$ if $\omega_1=\omega_2$ and $w_1\leq w_2$\end{center} for $w_1, w_2\in \Wg_{aff}$ and $\omega_1, \omega_2\in \Omega$ (see \cite[\S 2.1]{Haines}). We write $w<w'$ if $w\leq w'$ and $w\neq w'$ for   $w,w'\in \Wg$. Note that $w\leq w'$ and $\ellg(w)=\ellg(w')$ implies $w=w'$.

\medskip

\subsubsection{}  Let $\X_*^+$ denote the set of dominant coweights that is to say the subset of all $\lambda\in \X_*$ such that
$$\lp\lambda, \root\rp\geq 0 \textrm{  for all $\root\in \Root^+$}.$$  The set of antidominant coweights is $\X_*^-:= -\X_*^+$.
It is known that the extended Weyl group $\Wg$ is the disjoint union of all $\Wfg e^\lambda \Wfg$ where $\lambda$ ranges over $\X_*^+$ (resp. $\X_*^-$) (see for example \cite[2.2]{Kato-Sph})
\begin{rem}  We have $\ellg(we^\lambda)=\ellg(w)+ \ellg(e^\lambda)$ for all $w\in \Wfg$ and $\lambda\in \X_*^+$. 
\label{eq:length-domi}
\end{rem}

There is a partial order on $\X_*^+$ given by $\lambda\preceq \mu$ if and only if $\lambda-\mu$ 
is a non-negative integral linear combination of the simple coroots.



\subsubsection{Distinguished coset representatives\label{disting}}

The following statement is \cite[Proposition 4.6]{OS} (see \cite[Lemma 2.6]{Oparab} for ii).

\begin{pro}\label{prop:D} Let $\Ddg$ be the subset of 
the elements $d$ in $\Wg$ such that  \begin{equation}d^{-1}\Root^+\subset \Root_{aff}^+.\end{equation} 
\begin{enumerate}
\item It   is a system of representatives  of  the right cosets ${\Wfg}\backslash \Wg$. Any  $d\in \Ddg$ is the unique element with minimal length in  $\Wfg d$ and for any $w\in \Wfg$, we have
\begin{equation}\ellg(wd)=\ellg(w)+ \ellg(d).\label{addi}\end{equation}
\item An element  $d\in \Ddg$ can be written uniquely $d=e^\lambda w$ with $\lambda\in \X_*^+$ and  $w\in \Wfg$. We then have $\ellg(e^\lambda)=\ellg(d)+\ellg(w^{-1})$.  
\item Let $s\in \Sg$ and $d\in \Ddg$.  If $\ellg(ds)=\ellg(d)-1$ then $ds\in \Ddg$.
  If $\ellg(ds)=\ellg(d)+1$ then either $ds\in \Ddg$, or $ds\in \Wfg d$. 

\end{enumerate}
\end{pro}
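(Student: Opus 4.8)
The plan is to derive the three assertions from the length formula \eqref{add}, the semidirect decomposition $\Wg=\Omega\ltimes\Wg_{aff}$, and the characterization of $\ellg$ as counting affine roots sent to the negative side, following the pattern of \cite[Proposition 4.6]{OS} and \cite[Lemma 2.6]{Oparab}. First I would set up the basic dictionary: for $w\in\Wg$, $\ellg(w)=\#\{A\in\Root_{aff}^+ : w(A)\in\Root_{aff}^-\}$, and $\Root^+\subset\Root_{aff}^+$ under the identification $\root\leftrightarrow(\root,0)$. For part (i), given an arbitrary $w\in\Wg$, I want to produce $u\in\Wfg$ with $uw\in\Ddg$, i.e. $(uw)^{-1}\Root^+\subset\Root_{aff}^+$. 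The natural move is to choose $u$ so that $uw$ has minimal length in $\Wfg w$: if $\ellg(s_\root\cdot uw)<\ellg(uw)$ for some $\root\in\Pig$, then by \eqref{add} applied on the left — using that left multiplication by $s_\root$ corresponds to the action on $\Root^+$ — we can decrease the length, so minimality forces $(uw)^{-1}\Root^+\subset\Root_{aff}^+$. Conversely, if $d\in\Ddg$ and $w\in\Wfg$, then writing a reduced word for $w$ and multiplying on the left letter by letter, each step increases length because $d^{-1}$ (hence $(w'd)^{-1}$ for any prefix $w'$) keeps the relevant simple root positive and in $\Root_{aff}^+$; this gives \eqref{addi} and simultaneously shows $d$ is the unique minimal-length element of $\Wfg d$ and that the cosets are represented exactly once. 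The key input here is that $\ellg$ restricted to translations by dominant coweights is additive against $\Wfg$ (Remark \ref{eq:length-domi}), which lets me control the interaction of the finite and affine parts.

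For part (ii), I would use the known decomposition $\Wg=\coprod_{\lambda\in\X_*^+}\Wfg e^\lambda\Wfg$ together with part (i): given $d\in\Ddg$, write $d=v e^\lambda w$ with $v,w\in\Wfg$ and $\lambda\in\X_*^+$; by Remark \ref{eq:length-domi}, $\ellg(e^\lambda w)=\ellg(e^\lambda)+\ellg(w)$ is not quite what I need, so instead I would argue that the minimal-length representative of $\Wfg e^\lambda\Wfg$ lying in $\Ddg$ is forced to have trivial left $\Wfg$-component, giving $d=e^\lambda w$; uniqueness of $\lambda$ comes from disjointness of the double cosets and uniqueness of $w$ from the fact that $e^{-\lambda}d\in\Wfg$. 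The length identity $\ellg(e^\lambda)=\ellg(d)+\ellg(w^{-1})$ should then follow by applying \eqref{addi} to the element $w^{-1}d^{-1}=e^{-\lambda}$ after checking $d^{-1}\in$ the analogous distinguished set for $-\Root^+$, or more directly by counting affine roots: $\ellg(e^\lambda)$ counts $\{(\root,r)\in\Root_{aff}^+ : \langle\lambda,\root\rangle>r\ge 0 \text{ or sign conditions}\}$ and one partitions this set according to whether the image lies in $w(\Root^+)$ or not.

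For part (iii), I would argue directly with \eqref{add}. Let $s=s_\root\in\Sg$ and $d\in\Ddg$. If $\ellg(ds)=\ellg(d)-1$, then by \eqref{add} $d(\root)\in\Root_{aff}^-$ where here $\root=(\root,0)$; I must check $(ds)^{-1}\Root^+\subset\Root_{aff}^+$, which amounts to checking that for $\broot\in\Pig$, $s(d^{-1}(\broot))\in\Root_{aff}^+$, using $d^{-1}(\broot)\in\Root_{aff}^+$ and that $s=s_\root$ permutes $\Root_{aff}^+\setminus\{\root\}$ — the only possible failure is $d^{-1}(\broot)=\root$, but that case is excluded because it would force $d(\root)=\broot\in\Root^+\subset\Root_{aff}^+$, contradicting $d(\root)\in\Root_{aff}^-$. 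If instead $\ellg(ds)=\ellg(d)+1$, then $d(\root)\in\Root_{aff}^+$, and the same root-permutation analysis shows $(ds)^{-1}\Root^+\subset\Root_{aff}^+$ unless $d(\root)\in\Root^+$, in which case $ds$ and $d$ lie in the same left $\Wfg$-coset (as $ds=(s_{d(\root)})^{-1}\cdot\,$ something, more precisely $s_{d\root}d=ds$ with $s_{d\root}\in\Wfg$), so $ds\in\Wfg d$.

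The main obstacle I anticipate is \textbf{part (ii)}, specifically pinning down the length formula $\ellg(e^\lambda)=\ellg(d)+\ellg(w^{-1})$ and the claim that the $\Ddg$-representative of $\Wfg e^\lambda\Wfg$ has trivial left finite component: this requires carefully reconciling the two semidirect-product structures ($\Wfg\ltimes\X_*$ versus $\Omega\ltimes\Wg_{aff}$) and keeping track of how the length function behaves under both, rather than any single clever computation. Parts (i) and (iii) are essentially bookkeeping with \eqref{add} once the setup is in place.
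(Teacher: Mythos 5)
The paper offers no proof of this proposition (it is quoted from \cite[Proposition 4.6]{OS} and, for ii, \cite[Lemma 2.6]{Oparab}), so your sketch has to stand on its own. Parts (i) and (iii) essentially do. For (iii) your argument is exactly right: the only obstruction to $(ds)^{-1}\Root^+\subset\Root_{aff}^+$ is $d^{-1}(\broot,0)=(\root,0)$, which is excluded when $d(\root,0)\in\Root_{aff}^-$, and in the other case forces $ds=s_{d\root}d\in\Wfg d$. For (i), one step should be made explicit: minimality of $uw$ in $\Wfg w$, via the left-handed version of \eqref{add}, only gives $(uw)^{-1}\Pig\subset\Root_{aff}^+$; to conclude $(uw)^{-1}\Root^+\subset\Root_{aff}^+$ you must still write $(\broot,0)$ for $\broot\in\Root^+$ as a nonnegative integral combination of the $(\root,0)$, $\root\in\Pig$, and use that affine roots positive on $C$ are stable under such combinations (or induct on height). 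The converse direction (reduced word for $w$, each left multiplication raising the length because $d^{-1}$ sends the relevant positive finite root into $\Root_{aff}^+$) is correct and does yield \eqref{addi}, uniqueness of the minimal element, and the bijection with $\Wfg\backslash\Wg$.

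The genuine gap is in part (ii), as you anticipated, and both of your proposed routes are flawed. First, the claim that ``the minimal-length representative of $\Wfg e^\lambda\Wfg$ lying in $\Ddg$ is forced to have trivial left $\Wfg$-component'' does not give what you need: by Remark \ref{rema:D}, $\Ddg\cap\Wfg e^\lambda\Wfg=\Ddg\cap e^\lambda\Wfg$, which in general has many elements (all of $e^\lambda\Wfg$ when $\lambda$ is regular), and your given $d$ is an arbitrary one of them, not the minimal one; what must be proved is that \emph{every} $d\in\Ddg$ has dominant translation part. This follows instead from the one-line computation $d^{-1}(\broot,0)=(w^{-1}\broot,\lp\lambda,\broot\rp)$ for $d=e^\lambda w$, which forces $\lp\lambda,\broot\rp\geq 0$ for all $\broot\in\Root^+$ and, when $\lp\lambda,\broot\rp=0$, also $w^{-1}\broot\in\Root^+$ --- a constraint on $w$ you will need. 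Second, your primary route to $\ellg(e^\lambda)=\ellg(d)+\ellg(w^{-1})$, namely applying \eqref{addi} to $wd^{-1}=e^{-\lambda}$ after checking that $d^{-1}$ lies in the ``distinguished set for $-\Root^+$'', fails on both counts: $d^{-1}$ need not lie in that set as soon as $\lambda$ is on a wall (take $d=e^\lambda$ non-regular: $e^\lambda(-\broot,0)=(-\broot,0)\in\Root_{aff}^-$ when $\lp\lambda,\broot\rp=0$), and the additivity \eqref{addi} is simply not available for that mirror set --- already in rank one, $e^{-\check\alpha}$ lies in it, yet $\ellg(s_\alpha e^{-\check\alpha})=1\neq\ellg(s_\alpha)+\ellg(e^{-\check\alpha})=3$. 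Your fallback, counting affine roots, can be made to work, but the partition you describe is too vague to count as a proof: the workable version multiplies $d$ on the right along a reduced word $s_{j_1}\cdots s_{j_m}$ of $w^{-1}$ and checks with \eqref{add} that each step raises the length; the affine root to control at step $t$ is $(\gamma_t,-\lp\lambda,\gamma_t\rp)$ with $\gamma_t=w\beta_t\in\Root^-$, $\beta_t$ the $t$-th inversion of $w$, and the degenerate case $\lp\lambda,\gamma_t\rp=0$ is ruled out precisely by the constraint on $w$ extracted above from $d\in\Ddg$. As written, part (ii) is not established.
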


\begin{rem} \label{rema:D}Let $\lambda\in \X^+_*$. \\  - Then $e^\lambda\in \Ddg$ and $\Ddg\cap \Wfg e^\lambda \Wfg=\Ddg\cap e^\lambda \Wfg.$\\- There is a unique element with maximal length in $\Wfg e^\lambda \Wfg $: it is
 $w_\lambda:=w_0 e^\lambda$  where  $w_0$ is the unique element with maximal  length in $\Wfg$.

\end{rem}

\begin{lemma}\label{photo}
Let $\lambda, \mu\in \X^+_*$  and $d\in \Ddg\cap e^\lambda\Wfg$. 
\begin{enumerate}

\item
$d\leq e^\lambda$   and in particular $\ellg(d)<\ellg(e^\lambda)$ if $d\neq e^\lambda$.

\item   $d\leq e^{\mu}$  is equivalent to $e^\lambda\leq e^{\mu}$.
\item Let $w\in {\Wfg} e^{\lambda}  {\Wfg}$. If $w\leq w_{\mu}$
 then $e^\lambda\leq e^{\mu}$. In particular, $w_\lambda\leq w_\mu$ is equivalent to $e^\lambda\leq e^\mu$.

\end{enumerate}
\end{lemma}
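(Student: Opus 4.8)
The plan is to prove the three assertions in order, since each builds on the previous ones, and the main substantive input is the length-additivity property \eqref{addi} from Proposition \ref{prop:D} together with Remark \ref{eq:length-domi}.

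\textbf{Part (i).} Write $d = e^\lambda w$ with $w \in \Wfg$ (possible since $d \in e^\lambda \Wfg$). By Proposition \ref{prop:D}(ii) we have $\ellg(e^\lambda) = \ellg(d) + \ellg(w^{-1}) = \ellg(d) + \ellg(w)$, so $\ellg(d) \leq \ellg(e^\lambda)$ with equality iff $w = 1$, i.e.\ $d = e^\lambda$. To upgrade this to the Bruhat inequality $d \leq e^\lambda$, I would argue by descending induction on $\ellg(d)$, or more directly: since $d$ is the minimal-length element of $\Wfg d = \Wfg e^\lambda$ and $e^\lambda \in \Wfg d$, it suffices to show that every element of $\Wfg e^\lambda$ dominates $d$ in the Bruhat order — but actually I only need $e^\lambda \geq d$. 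One clean route: multiply $d = e^\lambda w$ on the left by reduced words for $w^{-1} \in \Wfg$; because of the length-additivity $\ellg(e^\lambda) = \ellg(w^{-1}) + \ellg(d)$ (rewriting the relation of (ii)), a reduced expression for $e^\lambda$ is obtained by concatenating a reduced expression for $w^{-1}$ with one for $d$, so $d$ is a (terminal) subword of a reduced expression for $e^\lambda$, hence $d \leq e^\lambda$ by the subword characterization of the Bruhat order (valid on $\Wg = \Omega \ltimes \Wg_{aff}$ with the order defined in the excerpt, since $d$ and $e^\lambda$ lie in the same $\Omega$-coset). The strict inequality $\ellg(d) < \ellg(e^\lambda)$ for $d \neq e^\lambda$ then follows from the equality case above, together with the remark that $w \leq w'$ and $\ellg(w) = \ellg(w')$ force $w = w'$.

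\textbf{Part (ii).} The implication $e^\lambda \leq e^\mu \Rightarrow d \leq e^\mu$ is immediate from (i) and transitivity of $\leq$. For the converse, suppose $d \leq e^\mu$. Since $d \in \Ddg \cap e^\lambda \Wfg$ and $e^\lambda \in \Ddg$ (Remark \ref{rema:D}), both $d$ and $e^\lambda$ are minimal-length representatives in their respective right $\Wfg$-cosets, and $d, e^\lambda$ lie in the \emph{same} coset $\Wfg e^\lambda = \Wfg d$. I would use the standard fact that for a parabolic quotient $\Wfg \backslash \Wg$ with minimal-length representatives, $d \leq w'$ in $\Wg$ with $d$ minimal in its coset implies $d \leq d'$ where $d'$ is the minimal representative in $\Wfg w'$ — wait, here I need to be careful about the direction. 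The relevant statement (Deodhar's lemma on Bruhat order and parabolic cosets, adapted to this right-coset setting) is: if $d$ is minimal in $\Wfg d$, then $d \leq w'$ iff $d \leq d'$ where $d'$ is the minimal-length element of $\Wfg w'$. Applying this with $w' = e^\mu$, whose coset $\Wfg e^\mu$ has minimal element $e^\mu$ itself (again by Remark \ref{rema:D}, as $e^\mu \in \Ddg$), we get $d \leq e^\mu$ iff $d \leq e^\mu$ — tautological, so that is not quite the lemma I want. Instead I should compare $e^\lambda$ and $e^\mu$ directly: the correct tool is that $x \leq y$ in $\Wg$ implies $x' \leq y'$ for the minimal coset representatives $x' \in \Wfg x$, $y' \in \Wfg y$; with $x = d$ (so $x' = d$), $y = e^\mu$ (so $y' = e^\mu$) this gives nothing new, so I must use the maximal representative instead, or pass through $w_\lambda = w_0 e^\lambda$. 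This indicates part (iii) is logically prior in spirit.

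\textbf{Part (iii), and finishing (ii).} For (iii): let $w \in \Wfg e^\lambda \Wfg$ with $w \leq w_\mu = w_0 e^\mu$. The key is that $w_\mu$ is the \emph{unique maximal-length} element of $\Wfg e^\mu \Wfg$ (Remark \ref{rema:D}), and dually I expect $e^\lambda$ (or rather the minimal element of $\Wfg e^\lambda \Wfg$, which by Remark \ref{rema:D} equals the minimal element of $\Ddg \cap e^\lambda\Wfg$, namely $e^\lambda$ after reducing on the right — more precisely the minimal element of the double coset is some $d_0 \leq e^\lambda$) to be the minimal one. The plan: use the compatibility of the Bruhat order with the double-coset structure $\Wfg \backslash \Wg / \Wfg$. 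Precisely, I would invoke the standard lemma that if $w \leq w'$ then for the maximal-length representatives $m, m'$ of $\Wfg w \Wfg$ and $\Wfg w' \Wfg$ one has $m \leq m'$; applying this with $w' = w_\mu$ (already the maximal element of its double coset) and $w$ in the double coset of $e^\lambda$ gives $w_\lambda \leq w_\mu$. Then $w_\lambda = w_0 e^\lambda \leq w_0 e^\mu = w_\mu$; projecting to minimal coset representatives (left multiplication by $w_0$ is, up to the coset structure, an order-compatible operation on these specific elements because $\ellg(w_0 e^\nu) = \ellg(w_0) + \ellg(e^\nu)$ by Remark \ref{eq:length-domi}), or directly by a subword argument on reduced expressions $w_0 e^\lambda$ vs $w_0 e^\mu$, I extract $e^\lambda \leq e^\mu$. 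This simultaneously proves the "in particular" clause of (iii), and feeds back to close the remaining direction of (ii): given $d \leq e^\mu \leq w_\mu$ with $d \in \Wfg e^\lambda \Wfg$, apply (iii) to conclude $e^\lambda \leq e^\mu$.

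\textbf{Main obstacle.} The real work is entirely in the interface between the Bruhat order on the Coxeter system $(\Wg_{aff}, \Sg_{aff})$, its $\Omega$-twisted extension to $\Wg$, and the two parabolic/double-coset structures by the finite Weyl group $\Wfg$ (which is \emph{not} a standard parabolic of $\Wg_{aff}$ — it is the ``finite direction'' subgroup). The cleanest path avoids ad hoc subword manipulations by quoting the order-theoretic lemmas on minimal and maximal coset representatives (as in \cite[\S2.1]{Haines} or Deodhar); verifying those apply verbatim here — given that $\Wfg$ sits inside $\Wg$ in the somewhat nonstandard way coming from $\Wg = \Wfg \ltimes \X_*$ — and checking the length-additivity hypotheses (Remark \ref{eq:length-domi}, Proposition \ref{prop:D}(i)–(ii)) are exactly what licenses each use, is the delicate point. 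I expect no conceptual difficulty beyond bookkeeping, but the order of the three parts should be: establish the subword/length facts, prove (i), prove (iii) via the maximal-double-coset-representative lemma, then derive (ii) from (i) and (iii).
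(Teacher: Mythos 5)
Your plan is workable and is a genuinely different organization from the paper's, but as written the real content is left unverified. The paper proves (i) exactly as you do (length additivity from Proposition \ref{prop:D}ii plus the subword property); it then proves (ii) \emph{first}, by induction on $\ellg(w)$ where $d=e^\lambda w$: at each step one chooses $\beta\in\Pig$ with $\lp{}^{w^{-1}}\!\lambda,\beta\rp<0$, checks via Proposition \ref{prop:D}iii that $ds_\beta\in\Ddg\cap e^\lambda\Wfg$, and uses the lifting lemma \cite[Lemma 4.3]{Haines} to keep the inequality $ds_\beta\le e^\mu$, driving $d$ up to $e^\lambda$; finally (iii) is reduced to (ii) by stripping the left $\Wfg$-factor off $ue^\mu\le w_0e^\mu$, again by the lifting lemma. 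You reverse the logic: prove (iii) via monotonicity of the projections to maximal double-coset and to minimal right-coset representatives, then deduce the hard direction of (ii) from $d\le e^\mu\le w_\mu$ and (iii). That reorganization is legitimate and arguably cleaner, since it avoids the choice of $\beta$ and the case analysis; but the two monotonicity statements you invoke are not literal off-the-shelf citations for $\Wg$ with the $\Omega$-twisted Bruhat order, and proving them is exactly where the work lies --- and it is done with the same tool the paper uses directly. Concretely: since $w_\mu$ is the unique maximal element of $\Wfg e^\mu\Wfg$, one has $sw_\mu<w_\mu$ and $w_\mu s<w_\mu$ for every $s\in\Sg$, so from $w\le w_\mu$ repeated left and right application of \cite[Lemma 4.3]{Haines} gives $uwv\le w_\mu$ for all $u,v\in\Wfg$, in particular $w_\lambda\le w_\mu$; and from $w_0e^\lambda\le w_0e^\mu$ one peels off a reduced word of $w_0$ on the left (lengths are additive on both sides by Remark \ref{eq:length-domi}) to get $e^\lambda\le e^\mu$. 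The same one-sided argument in the reverse direction supplies $e^\lambda\le e^\mu\Rightarrow w_\lambda\le w_\mu$, which your sketch needs for the \enquote{in particular} clause but does not address.

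Two further corrections. First, your stated obstacle that $\Wfg$ \enquote{is not a standard parabolic of $\Wg_{aff}$} is unfounded: $\Sg=\{s_\root,\ \root\in\Pig\}$ is a subset of $\Sg_{aff}$, so $\Wfg$ is the standard parabolic subgroup of $(\Wg_{aff},\Sg_{aff})$ it generates; the only genuine subtlety is the $\Omega$-component, which is constant on $\Wfg x\Wfg$ because $\Wfg\subset\Wg_{aff}$, so all comparisons take place inside a single $\Omega$-coset where the Coxeter-theoretic lemmas apply. Second, in (i) the concatenation goes the other way: $e^\lambda=d\,w^{-1}$ with $\ellg(e^\lambda)=\ellg(d)+\ellg(w^{-1})$, so $d$ is an initial (not terminal) segment of a reduced expression of $e^\lambda$; the conclusion $d\le e^\lambda$ is unaffected. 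Your first attempt at (ii) via the minimal-representative projection is indeed vacuous, as you noticed; in your scheme (ii) should simply be recorded as a consequence of (i) and (iii).
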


\begin{proof}
The first assertion comes from ii. of Proposition \ref{prop:D}. To prove the second assertion,
write   $d= e^{\lambda} w$ with $w\in \Wfg$ and suppose that $d\leq e^{\mu}$.  
If  $w\neq 1$,   then
 ${}^{w^{-1}}\!\lambda$  is not a  dominant coweight otherwise by  Remark \ref{eq:length-domi} we would have
$\ellg(d)>\ellg(e^\lambda)$. 
Therefore, there is
$\beta\in \Pi$  such that $\lp {}{}^{w^{-1}}\!\lambda,\beta\rp<0$, that is to say $d(\beta,0)=(w \beta, -\lp {}^{w^{-1}}\!\lambda,\beta\rp)\in \Root_{aff}^+-\Root^+$. This implies that $\ellg(ds_\beta)=\ellg(d)+1$ by \eqref{add} and that $ds_\beta d^{-1}\not\in \Wfg$ so that $ds_\beta\in \Ddg$ after Proposition \ref{prop:D} iii. 
Note that applying Proposition \ref{prop:D} ii.  to $d$ and $ds_\beta$ shows that
 $\ellg(ws_{\beta})=\ellg(w)-1$.   
By  Lemma \cite[4.3]{Haines} (repeatedly) we get from $d\leq e^\mu$ that  $ds_\beta\leq e^{\mu}$ (we have either $ds_{\beta}\leq e^{\mu}$ or $ds_\beta\leq e^{\mu} s_\beta$. In the latter case, 
$ds_\beta\leq e^{\mu} s_\beta\leq e^{\mu}$  if $\lp\mu, \beta \rp>0$ ; otherwise $\lp\mu, \beta \rp=0$ and $e^{\mu}$ and $s$ commute: we have $ds_\beta\leq  s _\beta e^{\mu} $ which  implies
 that either $ds_\beta \leq e^{\mu}$ or   $s_\beta ds_\beta \leq e^{\mu}$, but $ds_\beta\leq s_\beta ds_\beta$ because $ds_\beta\in \Ddg$, so in any case $ds_\beta\leq e^{\mu}$).
We  then complete   the proof of the second assertion  by induction on $\ellg(w)$.
 \\
To prove the last assertion,  it is enough to consider the case $w=d\in \Ddg$. We prove by induction on $\ellg(u)$ for $u\in \Wfg$ that $d\leq ue^{\mu}$ implies  $d\leq  e^{\mu}$: let $s\in S$ such that $\ellg(su)=\ellg(u)-1$;  by Lemma \cite[4.3]{Haines} we have $d\leq sd\leq su e^{\mu}$ or
$d\leq su e^{\mu}$; conclude. Therefore, $d\leq w_{\mu}$ implies $d\leq  e^{\mu}$ and by ii., $e^{\lambda}\leq e^{\mu}$. \end{proof}

One easily  deduces from the previous Lemma (see also \cite[\S1]{LuSing} for the compatibility between the partial orderings $\preceq$ and $\leq$ on the dominant coweights)  the following well known result (\cite[7.8]{HKP}, \cite[(4.6)]{Kato-Sph}).  Let  $\lambda \in \X_*^+$.  We have \begin{equation}\label{connu}\{w\in \Wg, \:w{\leq} w_\lambda \}=\coprod_{\mu} \Wfg e^{\mu }\Wfg\end{equation}
where $\mu\in \X_*^+$ ranges over the  dominant coweights such that  
$e^\mu\leq e^\lambda$ or equivalently $\mu{\preceq}\lambda$.


\subsection{\label{subsec:rootdataG}Affine root system  attached to $\mathbf{G}(\Corps)$}
We refer  for example  to \cite[I.1]{SS}  and \cite{Tit} for the description of the root datum $(\Phi, {\rm X}^*(\Tp), \check\Phi, {\rm X}_*(\Tp))$
associated to the choice  (\S\ref{lasko}) of  a maximal $\mathfrak F$-split torus $\Tp$ in $\Gp$ (or rather, $\Tp$ is the group of $\Corps$-points  of a maximal torus in $\mathbf G$). 
This root system is reduced because  the group $\mathbf{G}$ is $\mathfrak F$-split.

\subsubsection{Apartment attached to a maximal split torus\label{rootG}}

The set  ${\rm X}^*(\Tp)$ (resp.   ${\rm X}_*(\Tp)$) is   the set of algebraic characters (resp. cocharacters)
of $\Tp$.  The cocharacters will also be called the  coweights.  Let $\X^*({\rm Z})$ and $\X_*({\rm Z})$ denote respectively the set of algebraic characters and cocharacters
of the connected center ${\rm Z}$ of $\Gp$.

 As before, we denote by
$ \lp \, .\,,.\, \rp: {\rm X}_*(\Tp)\times {\rm X}^*(\Tp)\rightarrow \Z$
the natural perfect pairing. 
$\lp \, .\,,.\, \rp$.  Its $\mathbb R$-linear extension is also denoted by
$\lp \, .\,,.\, \rp$ .
\medskip
The vector space
\begin{equation*}
    \mathbb R\otimes _{\mathbb Z}(\X_*({\Tp})/\X_*({\rm Z})) 
\end{equation*}
considered as an affine space  on itself identifies with an apartment $\Aa$ of the building $\mathscr{X}$ that we will call standard. We choose the hyperspecial vertex $x_0$ as an origin of $\Aa$. Note that the corresponding  apartment in the extended building $\mathscr X^{ext}$ as described in \cite[4.2.16]{Tit} is the affine space
  $\mathbb R\otimes _{\mathbb Z}\X_*({\Tp})$.  
Let $\root\in \Phi$. Since  $\lp ., \root\rp$ has value zero on $\X_*({\rm Z})$, the function $\root(\,.\,):=\lp\, . \,, \root\rp$ on 
$\Aa$ is well defined. The reflection  $s_\root$   associated to a root $\root\in\Phi$ 
can be seen as a reflection on  the affine space $\Aa$ given by
    $s_\root: x\mapsto x- \root(x)  \coroot $.
    The natural action on $\Aa$ of the  normalizer  $N_\Gp(\Tp)$ of $\T$  in $\Gp$  yields an isomorphism between
   $N_\Gp(\T)/\T$ and  the subgroup $\Wf$ of the transformations of  $\Aa$ generated by these reflections. 

The  choice of the chamber $C$ (\S\ref{lasko}) of the standard apartment implies in particular the choice of   the subset
 $\Phi^+$   of the positive  roots,  that is  to say the set of all $\root\in \Phi$ that take non negative values on $C$.  Set $\Phi^-:= -\Phi$. We fix  $\Pi$ a basis for $\Phi^+$.  We denote by $\Phi_{aff}$ (resp. $\Phi_{aff}^+$, resp. $\Phi_{aff}^-$) the set of affine (resp. positive affine, resp. negative affine)  roots, and by $\Pi_{aff}$ the corresponding basis for $\Phi_{aff}$ as in \ref{rootgene}.
 Denote by $\X_*^+(\T)$ (resp.  $\X_*^-(\T)$) the set of dominant (resp. antidominant) coweights.
 The partial ordering on $\X_*^+(\T)$ associated to $\Pi$ is denoted by $\preceq$.


\medskip

The extended Weyl group $\W$  is the semi-direct product of $\Wf\ltimes\X_*(\T)$. It contains the affine Weyl group $\W_{aff}$.  We denote by $\ell$ the length function and by $\leq$ the Bruhat ordering on $\W$. They extend the ones on the Coxeter system $(\W_{aff}, \SS_{aff})$.

\medskip

To an element $g\in \Tp$ corresponds a vector $\nu(g)\in \mathbb R\otimes _{\mathbb Z}\X_*({\Tp})$ defined by
\begin{equation}\label{normalization}
    \lp\nu(g),\, \chi\rp =-\val_{\mathfrak F}(\chi(g))  \qquad \textrm{for any } \chi\in \X^*(\Tp).
\end{equation}
The kernel of $\nu$ is the maximal compact subgroup $\T^0$ of $\Tp$. The quotient of $\Tp$ by $\T^0$  is a free abelian group  with rank equal to $\rm dim(\Tp)$, and $\nu$ induces an isomorphism  $\Tp/\T^0\cong\X_*(\Tp)$. The group $\Tp/\T^0$ acts by translation on $\Aa$ via $\nu$.
The actions of $\Wf$ and $\Tp/\T^0$ combine into an action of 
the quotient of $N_\Gp(\Tp)$ by $\Tp^0$  on $\Aa$ as recalled in \cite[page 102]{SS}. Since $x_0$ is a special vertex of the building,   this quotient identifies with $\W$ (\cite[1.9]{Tit}) and from now on we identify $\W$ with 
 $N_\Gp(\Tp)/\Tp^0$. In particular, a simple reflection $s_A\in \SS_{aff}$ corresponding to the affine root $A=(\root, r)$ can be seen as the reflection at the hyperplane with equation $\lp\,.\,, \root\rp+r=0$ in the affine space $\Aa$.

 \medskip

 We denote by $\Dd$ the distinguished set of representatives of the cosets $\Wf\backslash \W$ as defined  in \ref{disting}.

\medskip

\begin{rem} \label{rema:normalization}In \cite{satake} the chosen isomorphism between $\T/\T^0$ and $\X_*(\T)$ is not the same as   \eqref{normalization}. Here we  chose to follow \cite[1.1]{Tit} and \cite[I.1]{SS}.
The consequence is that the image in $\T/\T^0$ of the submonoid $\T^-:=\{t\in \T, \: \val_\Corps(\root(t))\leq 0 \textrm{ for all }\root\in \Phi^+\}$
(cf \cite[Definition 1.1]{satake})  corresponds, in our normalization, to the submonoid $\X_*^+(\T)$ of $\X_*(\T)$. In explains why the dominant coweights appear naturally in our setting.

\end{rem}

\subsubsection{\label{notations:tame}Tame extended Weyl group}
Let $\Tp^1$ be the pro-$p$ Sylow subgroup of $\T^0$.
We denote by $\tilde\W$ the quotient of $N_\Gp(\Tp)$   by $\Tp^1$  and obtain the exact sequence
$$0\rightarrow \Tp^0/\Tp^1 \rightarrow \tilde\W\rightarrow \W\rightarrow 0.$$ We  fix a lift $\tilde{w} \in \tilde\W$ of any $w \in \W$.  

\medskip
The length function $\ell$ on $\W$ pulls back to a length function $\ell$ on $\tilde\W$  (\cite[Prop. 1]{Ann}).
For $u,v\in \tilde\W$ we write $u\leq v$ if their projections $\bar u$ and $\bar v$ in $\W$ satisfy $\bar u\leq \bar v$.

\medskip

For any $A\subseteq \W$ we denote by $\tilde A$ its preimage in $\tilde \W$. In particular,
we have the set $\tilde \X_*(\T)$: as well as those of $\X_*(\T)$, its elements will be denoted by $\lambda$ or $e^\lambda$. For $\root\in \Phi$, we inflate the function $\root(\,.\,)$ defined on $\X_*(\T)$ to 
$\tilde \X_*(\T)$.  We will  write $\lp x, \root\rp:=\root(x)$ for  $x\in \tilde \X_*(\T)$.
We still call \emph{dominant coweights} the elements in the preimage  $\tilde \X_*^+(\T)$ of $\X_*^+(\T)$.
\subsubsection{\label{bruhat}Bruhat decomposition} 
We have the decomposition $\Gp = \Iw N_\Gp(\rm T)\Iw$ and two cosets $ \Iw n_1\Iw$ and $ \Iw n_2\Iw$  are equal if and only if $n_1$ and $n_2$ have the same projection in $\W$. 
In other words,  a system of representatives in $N_\Gp(\rm T)$ of the elements in $\W$  provides a system of representatives of the double cosets of $\Gp$ modulo $\Iw$. This follows from \cite[3.3.1]{Tit}. We fix  a lift $\hat{w} \in N_\Gp(\Tp)$ for any $w \in \W$  (resp. $w \in \tilde\W$).
In \ref{rootsubgroup} we will introduce specifically chosen lifts for the elements $\tilde s$,  where $s\in \SS_{aff}$. By \cite[Theorem 1]{Ann} the group $\Gp$ is the disjoint union of the double cosets $\I\hat w \I$ for all $w\in \tilde \W$.

\begin{rem} For $w\in \tilde \W$, we will sometimes write $w \I w^{-1}$ instead of $\hat w \I \hat w^{-1}$ since it does not depend on the chosen lift.
\end{rem}

\subsubsection{\label{cartan}Cartan decomposition} 
The  double cosets of $\Gp$ modulo $\K$ are indexed by the coweights in a chosen Weyl chamber:  for $\lambda\in \X_*^+(\T)$, the element $\lambda(\varpi)$ is a lift for $e^{-\lambda}\in \W$ (see Remark \ref{rema:normalization}) and $\Gp$ is the disjoint union of the double cosets $\K \lambda(\varpi) \K$ for $\lambda\in \X_*^+(\T)$.

\subsubsection{Root subgroups\label{rootsubgroup} and Chevalley basis}

For $\alpha\in \Phi$, we consider the   attached  unipotent subgroup  $\Uu _\alpha$  of $\Gp$ as in  (\cite[6.1]{BTI}). To an  affine root  $(\alpha, r)\in \Phi_{aff}$  corresponds a subgroup
$\Uu _{(\alpha, r)}$ of $\Uu _\alpha$ (\cite[1.4]{Tit}) the following properties of which we are going to use (\cite[p. 103]{SS}):\\
- For $r\leq r'$ we have $\Uu _{(\alpha, r')}\subseteq \Uu _{(\alpha, r)}$.\\
- For $w\in \W$, 
the group $\hat w \Uu _{(\alpha, r)}\hat w^{-1}$ does not depend on the lift $\hat w\in\Gp$ and is equal to $\Uu _{w(\alpha, k)}$.

\medskip

 We fix an épinglage for $\Gp$ as in SGA3 Exp. XXIII, 1.1 (see \cite{Conrad}). In particular, to $\root\in\Phi$ is attached a central isogeny $\phi_\root: {\rm SL}_2(\Corps)\rightarrow \Gp_\root$ where $\Gp_\root$ is the subgroup of $\Gp$ generated by $\Uu_\root$ and $\Uu_{-\root}$ (\cite[Thm 1.2.5]{Conrad}). 
 \medskip

 We set $n_{s_{\root}}:= \phi_\alpha\begin{pmatrix}0&1\cr -1&0\end{pmatrix}$ and, for $u\in \Corps^*$,  $h_\root(u):=\phi_\alpha\begin{pmatrix}u&0\cr 0&u^{-1}\end{pmatrix}.$ Then $\T$ contains $h_\root(\Corps^*)$ for all $\root\in\Phi$. After embedding $\fq^*$ into $\Corps ^*$ by Teichm{\"u}ller lifting, we  consider the subgroup  $\T_\root(\fq)$  of $\T$ equal to the  image 
of $\fq^*$ by $h_\root$.

 For  $\root\in\Pi_m$, set $h_{(\root, 1)}:= h_\root$, $\T_{(\root, 1)}(\fq):=  \T_{\root}(\fq)$  and
 $n_{s_{(\root, 1)}}:= \phi_\alpha\begin{pmatrix}0&\varpi\cr -\varpi^{-1}&0\end{pmatrix}$.

 \medskip
 
For $A\in \Pi_{aff}$, 
the element  $n_{s_A}\in  N_G(\T)$ is a lift for  $s_A\in \SS_{aff}$ (\cite[Proof of Proposition 1.3.2]{Conrad}).
The normalizer $N_\Gp(\T)$ of $\T$ is generated by $\T$ and all the $n_{s_{\root}}$ for $\root\in \Phi$.
For all $w\in \tilde \W$ with length $\ell$, there is  $\omega\in \tilde \W$ with length zero and $s_1, ..., s_\ell\in \SS_{aff}$ such that the product
$n_{s_1}... n_{s_\ell}\in N_\Gp(\T)$ is a lift for $\omega w\in \tilde \W$.

\subsection{\label{rootF}Affine root system attached to a standard facet}

\subsubsection{} Let $F\subseteq \overline C$ be a facet containing $x_0$ in its closure.   Such a facet will be called \emph{standard}. Attached to it is the subset $\Pi_F$ of the roots in $\Pi$ taking value zero on $F$, or equivalently the subset $\SS_F$ of the reflections in $\SS$ fixing $F$ pointwise. 

\begin{rem}\label{facet}
The closure $\overline{F}$ of a facet $F$ consists exactly of the points of $\overline C$ that are fixed by the reflections in $\SS_F$ (\cite[V.3.3 Proposition  1]{Bki-LA}).
\end{rem}

We let $ \Phi_F$ denote the set of  roots in $\Phi$ taking value zero on $F$
and set $ \Phi_F^+ := \Phi_F \cap \Phi^+$,  $\Phi_F^- : = \Phi_F \cap \Phi^-$.
We consider the root datum $(\Phi_F, {\rm X}^*(\Tp), {\check\Phi}_F, {\rm X}_*(\Tp))$. 
The    corresponding finite Weyl group  $\Wf_F $ is the subgroup of $\Wf$ generated by all $s_\root$ for  $\root\in  \Phi_F$.
The pair $(\Wf_F, \SS_F)$ is a Coxeter system. The restriction $\ell | \Wf_F$ coincides with its length function (\cite[IV.1.8 Cor.\ 4]{Bki-LA}).
 The extended Weyl group is $\W_F=\Wf_F\ltimes\Weight(\Tp)$. Its action on the  affine roots are $\Phi_{F, aff}:=\Phi_F\times \Z$ coincides with the restriction of the action of $\W$.
Denote by $\underset{F}\preceq$ the partial order on $\X_*(\T)$ with respect to $\Pi_F$, by
$\W_{F, aff}$  the affine Weyl group with generating set $\SS_{F, aff}$ defined as in \ref{rootgene}.
It comes with a length function denoted by $\ell_F$ and a Bruhat order denoted  by $\underset{F}\leq$,  which  can both be extended
to $\W_F$. 

\medskip

\subsubsection{\label{subsec:strongly}}  
The restriction $\ell\vert\W_F$ does not coincide with $\ell_F$ in general, and likewise the restriction to $\W_F$  of the Bruhat order on $\W$ does not coincide with $\underset{F}\leq$. 
 We call $F$-positive the elements $w$ in $\W_F$  satisfying
$$w^{-1}(\Phi^+-\Phi_F^+)\subset \Phi_{aff}^+.$$ For $\lambda\in \X_*(\T)$, the element $e^\lambda$ is  $F$-positive
if  $\lp \lambda, \root \rp\geq 0$  for all $\root\in \Phi^+-\Phi_F^+.$ In this case, we will say that the coweight $\lambda$ itself is $F$-positive. We observe that if $\mu$ and $\nu \in \X_*(\T)$ are $F$-positive  coweights such that $\mu-\nu$  is also $F$-positive, then we have the equality.
\begin{equation}\label{lengthF}
\ell(e^{\mu-\nu})+\ell(e^{\nu})-\ell(e^{\mu})=  \ell_F(e^{\mu-\nu})+\ell_F(e^{\nu})-\ell_F(e^{\mu})
\end{equation}
Its  left hand side is  indeed by definition
$\sum_{\root\in \Phi^+}\vert \lp\mu-\nu,\root\rp\vert +\vert \lp \nu,\root\rp\vert -\vert \lp\mu ,\root\rp\vert$
but the contribution to this sum of the roots in $\Phi^+- \Phi^+_F$ is zero since 
$\mu-\nu$, $\mu$ and $\nu$ are $F$-positive.  \\

Since the elements in $\Wf_F$  stabilize the set $\Phi^+-\Phi_F^+$, an element in $\W_F$ is $F$-positive if and only if it belongs to $\Wf_F e^\lambda \Wf_F$ for some $F$-positive coweight $\lambda$. 
The $F$-positive elements in $\W_F$ form a semi-group.
A coweight $\lambda$    is said strongly $F$-positive if  $\lp \lambda, \root \rp> 0$  for all $\root\in \Phi^+-\Phi_F^+$ and
$\lp \lambda ,\root \rp=0$  for all $\root\in\Phi_F^+$. By \cite[Lemma 6.14]{BK}, strongly $F$-positive elements do exist.

\medskip
\begin{rem}
If $F= x_0$, then $\W_{x_0}= \W$.
If $F=C$ then $\W_C= \X_*(\Tp)$ and the $C$-positive elements are the dominant   coweights. A strongly $C$-positive element will be called strongly dominant.
\label{strongly}

\end{rem}

\begin{lemma} \label{F-positive}

{\emph i}. Let $\mu\in\X_*(\T)$ and  $\lambda\in\X_*^+(\T)$ such that
$\mu\preceq _F\lambda$. Suppose that for all $\alpha\in \Phi_F^+$ we have $\lp\mu, \alpha \rp\geq 0$, then $\mu\in\X_*^+(\T)$.\\
\emph{ii}. Let $v\in \W_F$ such that $v\underset{F}\leq e^{\lambda}$ for some $\lambda\in \X^+_*(\T)$. Then $v$ is $F$-positive and
  there is $\mu\in  \X^+_*(\T)$ with $\mu\preceq _F\lambda$ such that  $\Wf_Fv \mathfrak W_F= \Wf_Fe^{\mu}\Wf_F$.

\end{lemma}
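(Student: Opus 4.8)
For part (i), the plan is to argue directly from the definitions of $\preceq_F$ and dominance. Write $\lambda - \mu = \sum_{\alpha \in \Pi_F} c_\alpha \coroot$ with $c_\alpha \in \Z_{\geq 0}$ (this is what $\mu \preceq_F \lambda$ means, using the simple coroots attached to $\Pi_F$). Let $\beta \in \Phi^+$; I must show $\langle \mu, \beta \rangle \geq 0$. If $\beta \in \Phi_F^+$ this is the hypothesis. If $\beta \in \Phi^+ - \Phi_F^+$, then I use that $\langle \coroot, \beta \rangle$ for $\alpha \in \Pi_F$, $\beta \notin \Phi_F$: here the point is that $\beta$, expanded in the basis $\Pi$, involves at least one simple root not in $\Pi_F$ with positive coefficient, and for $\alpha \in \Pi_F$ the pairing $\langle \coroot, \beta\rangle \leq 0$ — this is the standard fact that distinct simple roots pair non-positively, applied after noting $\beta$ is a sum of simple roots. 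Hence $\langle \lambda - \mu, \beta\rangle = \sum_\alpha c_\alpha \langle \coroot, \beta\rangle \leq 0$, so $\langle \mu, \beta \rangle \geq \langle \lambda, \beta \rangle \geq 0$ since $\lambda$ is dominant. That settles (i).

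For part (ii), the plan is to reduce to the dominant case via Lemma \ref{photo} applied inside the root system attached to $F$ (the setting of \ref{rootgene} applies to $\Phi_F$ by the remarks opening Section 2, so all of Proposition \ref{prop:D} and Lemma \ref{photo} are available with subscript $F$). First, from $v \underset{F}\leq e^\lambda$ I want to locate the $\Wf_F$-double coset of $v$: by the $F$-analogue of \eqref{connu} (equivalently Lemma \ref{photo}(iii) applied in the $F$-root system), since $e^\lambda$ is the maximal-length element $w_{F,\lambda}$ of $\Wf_F e^\lambda \Wf_F$ only when $\lambda$ is $\Wf_F$-dominant — so I first note that $\lambda \in \X_*^+(\T)$ is in particular dominant for $\Phi_F$, hence $e^\lambda$ is $F$-positive and by Remark \ref{eq:length-domi} (in the $F$-system) $\ell_F(e^\lambda)$ is its own double-coset maximum. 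Then $v \underset{F}\leq e^\lambda = w_{F,\lambda}$ forces, by \eqref{connu} for $\Phi_F$, that $v \in \Wf_F e^\mu \Wf_F$ for some $\mu \in \X_*^+(\Phi_F)$ with $\mu \preceq_F \lambda$. It remains to upgrade "$\mu$ dominant for $\Phi_F$" to "$\mu$ $F$-positive as a coweight" and to get $F$-positivity of $v$ itself: since $\mu \preceq_F \lambda$ and $\lambda$ is $F$-positive, $\lambda - \mu = \sum_{\alpha\in\Pi_F}c_\alpha\coroot$ pairs to $0$ with every $\root \in \Phi^+ - \Phi_F^+$ (because $\langle \coroot,\root\rangle$ summed — wait, more precisely each $\langle\coroot,\root\rangle \leq 0$ and also $\langle \mu,\root\rangle$... ) — here I instead argue: $\mu$ being $\Wf_F$-dominant pairs non-negatively with $\Phi_F^+$, and I must check $\langle \mu, \root\rangle \geq 0$ for $\root \in \Phi^+-\Phi_F^+$; but $\langle\lambda,\root\rangle \geq 0$ and $\langle \lambda - \mu,\root\rangle = \sum c_\alpha\langle\coroot,\root\rangle \leq 0$ gives $\langle\mu,\root\rangle \geq \langle\lambda,\root\rangle \geq 0$. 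So $\mu$ is $F$-positive, hence (Remark after \eqref{lengthF}: an element of $\W_F$ is $F$-positive iff it lies in $\Wf_F e^{\mu'}\Wf_F$ for an $F$-positive $\mu'$) $v$ is $F$-positive. Finally, to get the stronger conclusion that $\mu$ can be chosen in $\X_*^+(\T)$ — i.e. fully dominant — I can replace $\mu$ by a $\Wf_F$-translate: the $\Wf_F$-orbit of $\mu$ contains a unique $\Wf_F$-dominant representative, and I claim it is already globally dominant. Indeed apply part (i) to this representative $\mu'$: it satisfies $\mu' \preceq_F \lambda$? — not obviously, since $\Wf_F$-translation need not preserve $\preceq_F$.

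The main obstacle I anticipate is exactly this last point: ensuring the chosen $\mu$ (or a substitute) is dominant for the full system $\Phi$, not merely for $\Phi_F$. I expect to resolve it by working with the particular representative of $\Wf_F v \Wf_F$ that is the image of $e^\mu$ — or better, by noting that the $F$-positive coweight $\mu$ produced above, being $\Wf_F$-dominant and $F$-positive, is dominant for $\Phi = \Phi_F^+ \sqcup (\Phi^+ - \Phi_F^+)$ by the very computation just given ($\langle\mu,\root\rangle \geq 0$ on both pieces), so in fact $\mu \in \X_*^+(\T)$ already, with no further adjustment needed; then invoking (i) is not even required for this step, and (i) is used elsewhere in the paper. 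I would double-check the edge cases $F = x_0$ (where $\Phi_F = \emptyset$, $\preceq_F$ is trivial, and the statement degenerates) and $F = C$ (where $\W_F = \X_*(\T)$ and the statement is essentially \eqref{connu}) as consistency checks.
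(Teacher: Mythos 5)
Your strategy is the same as the paper's (part i via non-positivity of pairings between distinct simple roots and coroots, part ii via \eqref{connu} applied in the root system attached to $F$ and then part i), but the central inequality you rely on is false as stated. You claim that $\lp\check\alpha,\beta\rp\leq 0$ for every $\alpha\in\Pi_F$ and every $\beta\in\Phi^+-\Phi_F^+$, justified by \enquote{$\beta$ is a sum of simple roots and distinct simple roots pair non-positively}; this fails because the expansion of $\beta$ may involve $\alpha$ itself with positive coefficient. Concretely, in type $A_2$ with $\Pi=\{\alpha_1,\alpha_2\}$ and $\Pi_F=\{\alpha_1\}$, the root $\beta=\alpha_1+\alpha_2$ lies in $\Phi^+-\Phi_F^+$ and $\lp\check\alpha_1,\beta\rp=2-1=1>0$; accordingly $\lp\lambda-\mu,\beta\rp$ can be strictly positive and your deduction $\lp\mu,\beta\rp\geq\lp\lambda,\beta\rp$ breaks down (the conclusion $\lp\mu,\beta\rp\geq 0$ is still true, but not by this route). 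The repair is exactly what the paper does: dominance only needs to be tested against the simple roots, since every $\beta\in\Phi^+$ is a non-negative combination of elements of $\Pi$. For $\alpha\in\Pi_F\subseteq\Phi_F^+$ the hypothesis applies, and for $\alpha\in\Pi\setminus\Pi_F$ the element $\alpha$ really is distinct from every $\beta'\in\Pi_F$, so $\lp\check\beta',\alpha\rp\leq 0$ and hence $\lp\mu,\alpha\rp\geq\lp\lambda,\alpha\rp\geq 0$. Because you rerun the same faulty computation inside part ii (to show that the $\Phi_F$-dominant $\mu$ with $\mu\preceq_F\lambda$ pairs non-negatively with $\Phi^+-\Phi_F^+$) and then conclude that \enquote{invoking (i) is not even required}, the gap propagates: the correct route there is precisely to invoke part i (with the simple-root fix) to get $\mu\in\X_*^+(\T)$, from which $F$-positivity of $\mu$, and then of $v$ via the double-coset characterization of $F$-positive elements, follows as you indicate.

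A smaller slip in part ii: $e^\lambda$ is not the maximal-length element $w_{F,\lambda}$ of $\Wf_F e^\lambda\Wf_F$; by Remark \ref{rema:D} (in the $F$-system) that element is $w_Fe^\lambda$ with $w_F$ the longest element of $\Wf_F$, and Remark \ref{eq:length-domi} makes $e^\lambda$ short in its coset, not long. What you actually need, and what the paper uses, is only the chain $v\underset{F}\leq e^\lambda\underset{F}\leq w_{F,\lambda}$, after which \eqref{connu} for $\Phi_F$ gives $v\in\Wf_Fe^\mu\Wf_F$ with $\mu$ dominant for $\Phi_F$ and $\mu\preceq_F\lambda$, as you say. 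With these two corrections your argument becomes the paper's proof.
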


\begin{proof}i.   Let  $\alpha\in\Pi\backslash \Pi_F$. For all $\beta\in \Pi_F$, we have  $\lp\check\beta, \alpha \rp\leq 0$ \cite[Thm1 Ch VI, n 1.3]{Bki-LA} so
 $\lp\lambda-\mu, \alpha \rp\leq 0$ and
 $\lp\mu, \alpha \rp\geq 0$.
For ii., note that  in particular, $v\underset{F}\leq w_{F,\lambda}$ where $w_{F,\lambda}$ denotes the element with maximal length in $\Wf_Fe^\lambda \Wf_F$.
By \eqref{connu} applied to the root system associated to $F$,  there is a  unique $\mu\in \X_*(\T)$  with $\lp \mu,\root\rp \geq 0$ for all $\root\in \Phi^+_F$  and $\mu\underset{F}\preceq \lambda$ such that
$v\in \mathfrak W_F e^\mu \mathfrak W_F$, and i. implies that $\mu\in \X_*^+(\T)$.
In particular it is $F$-positive and
$v$ is also $F$-positive.

\subsubsection{\label{LeviDefi}} The affine root datum  $(\Phi_F, {\rm X}^*(\Tp), {\check\Phi}_F, {\rm X}_*(\Tp))$ is in fact the one attached to the semi-standard Levi subgroup $\M_F$ of $\Gp$ corresponding to the facet $F$ described below.

Consider   the subtorus $\T_F$ of $\T$  with dimension 
$dim(\T)-\vert \Pi_F\vert$ equal to the connected component of $\bigcap_{\root\in \Pi_F}\ker \root\subseteq \T$  and the Levi subgroup $\M_F$ of $\Gp$ defined to be the centralizer of $\T_F$. 
It is the group of $\Corps$-points of a reductive connected algebraic group $\mathbf M_F$ which is $\Corps$-split. 
The group $\M_F$ is generated by $\T$ and the root subgroups $\Uu_\root$ for $\root\in \Phi_F$.

The subgroup $(N_\Gp(\T)\cap \M_F)/\T^0$  of $N_\Gp(\T)/\T^0$ identifies with $\W_F$
in the isomorphism $N_\Gp(\T)/\T^0\simeq \W$.  It is generated by $\T$ and all $n_\root$ for $\root\in \Phi_{F}$.
Denote by $\mathscr X_F^{ext}$ the extended building for $\M_F$. It shares with $\mathscr X^{ext}$ the apartment corresponding to $\T$ but, in this apartment, the set of affine hyperplanes 
coming from the root system attached to $\M_F$ is a subset of those coming from the root system attached to $\Gp$.
Every facet in $\mathscr X^{ext}$ is contained in a unique facet of $\mathscr X_F^{ext}$ \cite[\S 2.9]{HainesBC}. Denote by $\mathbf c_F$
the unique facet in $\mathscr X_F^{ext}$ containing ${\rm pr}^{-1}(C)$.
By \cite[Lemma 2.9.1]{HainesBC}, the intersection $\Iw\cap \M_F$ is an Iwahori subgroup for $\M_F$: it is the pointwise  fixator in $\M_F$  of  $\mathbf c_F$. Its pro-$p$ Sylow subgroup is $\I\cap \M_F$.
We have a Bruhat decomposition for $\M_F$: it is the disjoint  union of the double cosets
$(\Iw\cap \M_F)\hat w (\Iw\cap \M_F)$ where $\hat w$ denotes the chosen lift for $w\in \W_F$ in $\Gp$ (\S\ref{bruhat}) which in fact belongs to $\M_F$.

Denote by $\tilde \W_F$ the quotient  $(N_\Gp(\T)\cap \M_F)/\T^1$. It is generated by $\T^0/\T^1$ and all 
$\tilde w$ for $w\in \W_F$. We have an exact sequence $$0\rightarrow \Tp^0/\Tp^1 \rightarrow \tilde\W_F\rightarrow \W_F\rightarrow 0.$$
The Levi subgroup $\M_F$ is the disjoint union of the double cosets $\I\hat w \I$ for all $w\in \tilde \W_F$.
We denote by $\tilde\Wf_F$ the preimage of $\Wf_F$ in $\tilde \W_F$.

\end{proof}

\subsection{Generic Hecke rings\label{geneHecke}} 

\subsubsection{\label{prezIM}} 
For $g\in \Gp$ we denote by $\t_g$ the characteristic function of $\I g\I$. Since it only depends on the element $w\in \tilde \W$ such that $g\in \I \hat w\I$, we will also denote it by $\t_w$.  
The convolution ring $\Htg$  of the functions with finite support in $\I\backslash \Gp/ \I$ and values in $\Z$  is a free $\Z$-module with basis
the set of all $\{\t_{w}\}_{w\in \tilde \W}$ with product given by
 (\cite[Theorem 1]{Ann}) the following braid and respectively quadratic relations:\\
\begin{equation} \label{braid} \textrm{$\t_{ww'}=\t_w \t_{w'}$ for $w, w'\in \tilde \W$ satisfying $\ell(ww')=\ell(w)+ \ell(w')$.}\end{equation}
\begin{equation} \label{Q}   \textrm{
$\t_{n_A}^2=\nu_A\t_{n_A}+ q\t_{h_A(-1)}$ for $A\in \Pi_{aff}$,}\textrm{ where  $\nu_A:=\sum_{t\in \T_A(\fq)}\t_{t}$.}\end{equation}

The braid relations imply that $\Htg$ is generated by all $\t_{n_A}$ for $A\in \Pi_{aff}$ and $\t_\omega$ for $\omega\in \tilde \W$ with length zero.

\subsubsection{\label{involetal}} 

For any $w\in \W$, define $\tg_w^*$ to be the element in  $\Htg\otimes_\Z \Z[q^{\pm 1/2}]$ equal to $q^{\ell(w)}\tg_w^{-1}$. It actually lies in $\Htg$ and the
ring  $\Htg$ is endowed with an involutive automorphism defined by (\cite[Corollary 2]{Ann}) \begin{equation}\upiota:\tg_w\mapsto (-1)^{\ell(w)}\tg_{w^{-1}}^*.\label{involution}\end{equation}

\begin{rem} \label{invoNA}We have $\upiota(\t_{n_A})=-\t_{n_A}+\nu_A$.

\end{rem}

The following fundamental Lemma is proved in \cite[Lemma 13]{Ann} which is a
adaptation to the pro-$p$ Hecke ring of the analogous results of \cite[\S 5]{Haines} for the Iwahori-Hecke ring.
\begin{lemma} For $v, w\in\tilde \W$ we have in $\Htg\otimes_\Z \Z[q^{\pm 1/2}]$
$$q^{\frac{\ell(vw)+\ell(w)-\ell(v)}{2}}
\t_v\t_{w^{-1}}^{-1}= \t_{vw}+\sum_{x}a_x \t_x$$where $a_x\in \Z$ and $x$ ranges over a finite set of elements in $\tilde\W$ with length $<\ell(vw)$.  More precisely, these elements satisfy   $x<  {vw}$ (see \ref{notations:tame}).

\label{fond}
\end{lemma}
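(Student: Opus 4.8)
The statement to prove is Lemma \ref{fond}: for $v,w\in\tilde\W$, in $\Htg\otimes_\Z\Z[q^{\pm 1/2}]$ one has $q^{\frac{\ell(vw)+\ell(w)-\ell(v)}{2}}\,\t_v\t_{w^{-1}}^{-1}=\t_{vw}+\sum_x a_x\t_x$ with $a_x\in\Z$ and $x$ running over finitely many elements of $\tilde\W$ with $x<vw$ (hence $\ell(x)<\ell(vw)$). The natural strategy is induction on $\ell(w)$. The base case $\ell(w)=0$ is immediate: then $w$ is a length-zero element, $\t_{w^{-1}}^{-1}=\t_w$, the exponent $\frac{\ell(vw)+\ell(w)-\ell(v)}{2}$ is $0$ by the braid relation \eqref{braid} (since $\ell(vw)=\ell(v)$), and $\t_v\t_w=\t_{vw}$ again by \eqref{braid}; so the right-hand side is just $\t_{vw}$ with no correction terms.

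For the inductive step I would write $w=w's$ with $s=s_A\in\SS_{aff}$ (up to a length-zero element, which is harmless by the braid relations) and $\ell(w)=\ell(w')+1$. Then $\t_{w^{-1}}^{-1}=(\t_{s}\t_{w'^{-1}})^{-1}=\t_{w'^{-1}}^{-1}\t_{s}^{-1}$, and I use the quadratic relation \eqref{Q} in the form $\t_{n_A}^{-1}=q^{-1}(\t_{n_A}-\nu_A)\t_{h_A(-1)}^{-1}$ (or rather the normalized version: $q^{\ell(s)}\t_s^{-1}=\t_s^*$ lies in $\Htg$, with $\upiota$-type expression from Remark \ref{invoNA}). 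Distributing, $q^{(\ell(vw)+\ell(w)-\ell(v))/2}\t_v\t_{w^{-1}}^{-1}$ splits into a main term $q^{(\ell(vw')+\ell(w')-\ell(v))/2}\t_v\t_{w'^{-1}}^{-1}\cdot q^{1/2\cdot(\text{correction})}\t_s$ and lower terms coming from the $\nu_A$ piece (which carries $\t_t$ with $t$ a finite-torus element, length zero). One then applies the induction hypothesis to $q^{(\ell(vw')+\ell(w')-\ell(v))/2}\t_v\t_{w'^{-1}}^{-1}=\t_{vw'}+\sum_y a_y\t_y$ with $y<vw'$, and multiplies on the right by $\t_s$ (times the appropriate power of $q$). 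The term $\t_{vw'}\t_s$ is either $\t_{vw's}=\t_{vw}$ (when $\ell(vw's)=\ell(vw')+1$) or, via \eqref{Q}, equals $\t_{vw}+\nu_A'$-type lower terms (when $\ell(vw's)=\ell(vw')-1$); either way the leading term $\t_{vw}$ appears exactly once. For the terms $\t_y\t_s$ with $y<vw'$, one checks using the $\SS$-multiplication rules in the Hecke algebra — namely $\t_y\t_s\in\{\t_{ys}\}$ or $\t_y\t_s\in\{\t_{ys},\t_y,\ldots\}$ depending on whether $\ell(ys)=\ell(y)\pm1$ — that all resulting terms $\t_z$ have $z\le y\cdot s$ or $z\le y$, hence $z<vw$ by a standard property of the Bruhat order (if $y<vw'$ then $y\le vw$ and $ys\le vw$, and strictness is controlled by length); I'd invoke the known lemma $u<u'\implies us\le u'$ or $us\le u's$ (the "lifting property" / Deodhar, or \cite[4.3]{Haines} as used elsewhere in the paper) to handle this bookkeeping. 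The power-of-$q$ accounting uses the identity $\ell(vw)+\ell(w)-\ell(v)$ versus $\ell(vw')+\ell(w')-\ell(v)$: these differ by $\pm1\pm1$, and matching this against the half-integer powers picked up from $\t_s^{-1}$ and from any $\t_{vw'}\t_s$ or $\t_y\t_s$ that drops in length is exactly the computation carried out for the Iwahori case in \cite[\S5]{Haines}.

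The main obstacle I expect is the careful combinatorial tracking of the lower-order terms through both the quadratic relation applied to $\t_s^{-1}$ and the multiplication of already-lower terms $\t_y$ by $\t_s$: one must ensure integrality of all coefficients (the $q^{\pm 1/2}$'s must cancel against length drops, which is where the precise exponent in the statement is essential) and that every new index $z$ that arises genuinely satisfies $z<vw$ in the Bruhat order on $\tilde\W$ — not merely $\ell(z)<\ell(vw)$ — which requires the lifting/exchange properties of the extended Bruhat order and the fact (noted in \ref{notations:tame}) that the order on $\tilde\W$ is pulled back from $\W$. Since the paper explicitly says this is an adaptation of \cite[\S5]{Haines} to the pro-$p$ setting, the cleanest writeup is to mirror that argument, replacing the Iwahori quadratic relation $\t_s^2=(q-1)\t_s+q$ by the pro-$p$ version \eqref{Q} and observing that the extra finite-torus factors $\t_{h_A(-1)}$, $\nu_A$ commute appropriately and contribute only length-zero (hence lower, relative to $vw$) terms, so none of the length or Bruhat-order estimates are affected.
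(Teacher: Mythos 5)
The paper itself does not prove this lemma: it quotes it from \cite[Lemma 13]{Ann}, described as an adaptation of \cite[\S 5]{Haines}, so the only question is whether your induction would stand on its own. Its general shape (induction on $\ell(w)$, peeling a simple affine reflection off $w$ and using the quadratic relation \eqref{Q}) is the right family of argument, and your length-increasing case does go through; but there is a genuine gap at the other case, which is precisely the crux of the cited proofs.

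Write $w=w'\tilde{s}$ with $\tilde{s}$ a lift of $s_A$ and $\ell(w)=\ell(w')+1$, and suppose $\ell(vw)=\ell(vw')-1$. Then the exponent $\frac{\ell(vw)+\ell(w)-\ell(v)}{2}$ is \emph{equal} to the one for the pair $(v,w')$, so after invoking the induction hypothesis you must right-multiply $\t_{vw'}+\sum_y a_y\t_y$ by $\t_{\tilde{s}^{-1}}^{-1}=q^{-1}(\t_{\tilde{s}}-\nu_A)$ with no spare power of $q$. On the leading term this is harmless ($\t_{vw'}\t_{\tilde{s}^{-1}}^{-1}=\t_{vw}$ by the braid relation), and a lower term $\t_y$ with $\ell(ys_A)=\ell(y)-1$ just gives $\t_{y\tilde{s}}$; but a lower term with $\ell(ys_A)=\ell(y)+1$ gives $q^{-1}(\t_{y\tilde{s}}-\t_y\nu_A)$, which term by term is neither integral nor supported on $\{x< vw\}$: since $y<vw'$ only forces $\ell(y)\leq\ell(vw)$, the index $y\tilde{s}$ can have length $\ell(vw)+1$. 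The lemma is true here only because of cancellations among these contributions, and the hypothesis you carry through the induction ($a_y\in\Z$, $y<vw'$) gives no control over them --- ``the $q^{\pm 1/2}$'s must cancel against length drops'' is exactly the assertion that needs proof, not bookkeeping. Relatedly, your parenthetical handling of the leading term in this case ($\t_{vw'}\t_s$ equals $\t_{vw}$ plus $\nu_A$-type \emph{lower} terms) is wrong termwise: expanding via \eqref{Q} produces $q\,\t_{vw}$ plus terms of length $\ell(vw)+1$, which disappear only through an exact cancellation against $\t_{vw'}\nu_A$, using $\tilde{s}^{-1}t\tilde{s}=t^{-1}$ for $t\in\T_A(\fq)$ and $\tilde{s}^{2}=h_A(-1)$ --- a computation absent from the sketch. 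Finally, note that the sharp claim $a_x\in\Z$ is special to the pro-$p$ ring (already $q\,\t_{\tilde{s}^{-1}}^{-1}=\t_{\tilde{s}}-\nu_A$ is an alternating sum of $q$ distinct basis elements, whereas the Iwahori analogue is $T_s-(q-1)$), so ``mirroring'' \cite[\S 5]{Haines} is not a routine substitution of quadratic relations: you must either strengthen the statement you induct on or follow the actual argument of \cite[Lemma 13]{Ann}.
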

\subsubsection{\label{subsec:fourier}}
Following \cite[\S1.3, page 9]{Ann},  we suppose in this paragraph that $\R$  is a ring containing  an inverse for $(q. 1_\R-1)$ and  a primitive  $(q-1)^{\rm th}$ root of $1$.  
We  denote by $\R^\times$ the group of invertible elements of $\R$.
Recall that $\overline {\mathbf T}(\fq)$ identifies with $\T^0/\T^1$ and can therefore be seen as a subgroup of $\tilde\W$.
The finite Weyl group $\Wf$ identifies with the Weyl group of $\overline{\mathbf G}_{x_0}(\mathbb F_q)$ (\cite[3.5.1]{Tit}): it acts on  $\overline {\mathbf T}(\fq)$ and its $\R$-character. Inflate this action to  an
action of the extended Weyl group $\W$. Let $\xi: \overline {\mathbf T}(\fq)\rightarrow \R^\times$ be a $\R$-character of $\overline{\mathbf T}(\fq)$. We attach to it the following idempotent element 
$$\epsilon_{\xi}:=\dfrac{1}{\vert  \overline {\mathbf T}(\fq)\vert}\sum_{t\in  \overline {\mathbf T}(\fq)}\xi^{-1}(t) \t_t\:\:\in\:\:  \Htg\otimes _\Z \R.$$ Note that for $t\in \overline {\mathbf T}(\fq)$, we have 
$\epsilon_{\xi} \t_{t}=\t_{t}\epsilon_{\xi} =\xi(t)\epsilon_{\xi}.$ It implies that the quadratic relations in 
$ \Htg\otimes _\Z \R$ have the (simpler) form: let  $A\in \Pi_{aff}$

\begin{equation}\label{Q'}\begin{array}{l}
\textrm{- if $s_A. \xi=\xi$  then $\xi(h_A(-1))=1$  and   $\epsilon_{\xi}\t_{n_A}^2=\epsilon_{\xi}((q-1) \t_{n_A}+q)$.}\cr
\textrm{- if $s_A. \xi\neq \xi$ then $\epsilon_{\xi}\t_{n_A}^2=q\epsilon_{\xi}\xi({h_A(-1)})$}.\cr\end{array}\end{equation}

\subsubsection{\label{HeckeRingF}}  Let $F$ be a standard facet. The definitions of  the previous paragraphs apply to  the Levi subgroup $\M_F$ and its root system (\S\ref{rootF}). 
In particular, for $w\in \tilde \W_F$,  denote by  $\t_{w}^F$ the characteristic function of $(\I \cap \M_F) \hat w (\I \cap \M_F)$ and by $\tilde {\rm H}_\Z({\M_F})$ the  Hecke ring  as defined in \ref{prezIM}. It has $\Z$-basis   the set of all $\t_{w}^F$ for $w\in \tilde\W_F$ and the braid relations are  controlled by the length function $\ell_F$ on $\tilde \W_F$. 
The  $\Z$-linear involution of $\tilde {\rm H}_{\Z}(\M_F)$ as defined in \ref{involetal} is denoted by
 $\upiota^F$.
Note that when $F= x_0$ then    $\tilde {\rm H}_{\Z}(\M_F)$ is in fact $\Htg$  and we do not write the exponents $F$.

\medskip

The algebra   $\tilde {\rm H}_\Z(\M_F)$   does not inject in  $\tilde {\rm H}_\Z$  in general. However, there  is a \emph{positive} subring  
 $\tilde {\rm H}_{\Z}(\M_F)_+$   of $\Hh_\Z(\M_F)$ with $\Z$-basis  the set of all $\t_w^F$ for  $w\in \tilde\W_F$ that are $F$-positive,  and an injection 
$$\begin{array}{cccc}j_F^+:&\tilde {\rm H}_{\Z}(\M_F)_+&\longrightarrow& \Htg\cr &\t^F_w&\longmapsto &\t_w\cr\end{array}$$ which,
if $\R$ is a ring containing an inverse for $q.1_R$,  extends to a $\R$-linear injection
$\tilde {\rm H}_{\Z}(\M_F)\otimes _\Z \R\rightarrow \Htg\otimes_\Z \R$  denoted by $j_F$ (the proof in the case of  complex Hecke algebras can be found in \cite[(6.12)]{BK};  it goes through for pro-$p$ Iwahori-Hecke rings over $\Z$).

\section{Representations of spherical and  pro-$p$ Hecke algebras}
\subsection{Hecke algebras attached to parahoric subgroups of $\mathbf G(\Corps)$}
\subsubsection{Parahoric subgroups\label{parahorics}}
Associated to each facet $F$ of  the (semi-simple) building  is, in a $\Gp$-equivariant way, a smooth affine $\mathfrak{O}$-group scheme $\mathbf{G}_F$ whose general fiber is $\mathbf{G}$ and such that $\mathbf{G}_F(\mathfrak{O})$ is the pointwise stabilizer in $\Gp$ of the preimage $\pr^{-1}(F)$  of $F$ in the extended building. 
Its connected component is denoted by $\mathbf{G}_F^\circ$ so that the reduction $\overline{\mathbf{G}}_F^\circ$ over $\mathbb{F}_q$ is a connected smooth algebraic group. The subgroup $\mathbf{G}_F^\circ(\mathfrak{O})$ of $\Gp$ is  a parahoric subgroup. 
 We consider
\begin{equation*}
    \I_F := \{ g \in \mathbf{G}_F^\circ(\mathfrak{O}) :( g \mod \varpi) \in\ \textrm{unipotent radical of $\overline{\mathbf{G}}_F^\circ$} \}.
\end{equation*}
The groups $\I_F$ are compact open pro-$p$ subgroups in $\Gp$  such that $\I_C = \I$, $\I_{x_0} = \K_1$ and
\begin{equation}\label{pr1}
    g\I_F g^{-1} = \I_{gF} \qquad\textrm{for any $g \in \Gp$},\:\textrm{ and }\:
    \I_{F'} \subseteq \I_F \qquad\textrm{whenever $F' \subseteq \overline{F}$}.
\end{equation}

 Let  $F$ be  a standard facet. 
Then $\mathbf{G}_F^\circ(\mathfrak{O})$
 is the distinct union of the double cosets ${\I} \hat w {\I}$ for all $w$ in   $\tilde\Wf_F$ \cite[Lemma 4.9 and \S 4.7]{OS}. 
 
 \begin{rem}\label{parahoricgene}

 Denote by $\Uu_F$ the subgroup of $\K$ generated by  all 
 $\Uu_{(\root, -\inf_{F}(\root))}$ for all $\root\in\Phi$.  Then ${\mathbf G}_F^\circ(\mathfrak O) = \Uu_F \Tp^0$ (\cite[5.2.1, 5.2.4]{BTII}).

 \end{rem}

Since $F$ is standard, the product map 
\begin{equation}\label{rootsubgroupeq}
    \prod_{\root\in \Phi^-} \Uu_{(\root, 1)}\times  \Tp^1 \times \prod_{\root\in \Phi_F^+} \Uu_{(\root, 1)}\times  \prod_ {\root\in \Phi^+-\Phi_F^+} \Uu_{(\root, 0)}\overset{\sim}\longrightarrow \I_F
\end{equation}induces a bijection, where the products on the left hand side are ordered in some arbitrary chosen way (\cite[Proposition I.2.2]{SS}).
Denote by $\Uu_{F}^+$ the subgroup of $\I_F$ generated by all
$\Uu_{(\root, 0)}$ for  $\root$ in $ \Phi^+-\Phi_F^+$. Then $\I_F$ is generated by $\K_1$  and $\Uu_F^+$.

\medskip

Let $\Dd_F$ denote the set of elements in $\W$ such that $d^{-1}\Phi_F^+\subseteq \Phi_{aff}^+$. In particular, $\Dd_{x_0}$ coincides with $\Dd$ (\S\ref{rootG}) and contains $\Dd_F$ for any standard facet $F$. 

We have the following (\cite[Remark 4.17 and Proposition 4.13]{OS}): 

\begin{lemma} \begin{enumerate}
\item The set  of all $\hat{\tilde d}$ for $d\in \Dd_F$ is a system of representatives of the double cosets $\mathbf{G}_F^\circ(\mathfrak{O})\backslash \Gp /\I$.
\item For $d\in \Dd_F$, we have  $\I_{F} (d\I d^{-1} \cap {\mathbf{G}}_F^\circ(\Oo)) = \I$.

\end{enumerate}
\label{413}
\end{lemma}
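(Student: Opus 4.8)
The plan is to reduce both assertions to known combinatorial and structural facts about parahoric subgroups and the affine Weyl group, then use the Bruhat-type decompositions already recorded in the excerpt.

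\medskip

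\textbf{Part (1).} First I would recall from \cite[Remark 4.17 and Proposition 4.13]{OS} (or reprove directly) that $\Gp$ decomposes as a disjoint union of double cosets $\mathbf{G}_F^\circ(\mathfrak{O}) \hat{w} \I$. The group $\mathbf{G}_F^\circ(\mathfrak{O})$ is itself the disjoint union of the $\I\hat u \I$ for $u\in\tilde\Wf_F$ (stated just above in this subsection), and $\Gp$ is the disjoint union of the $\I\hat w\I$ for $w\in\tilde\W$ by the Bruhat decomposition of \S\ref{bruhat}. So a system of representatives for $\mathbf{G}_F^\circ(\mathfrak{O})\backslash\Gp/\I$ is obtained from a system of representatives for the cosets $\Wf_F\backslash\W$, and the natural candidate is $\Dd_F$, defined by the condition $d^{-1}\Phi_F^+\subseteq\Phi_{aff}^+$. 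The key step is to verify that $\Dd_F$ is indeed a system of representatives of $\Wf_F\backslash\W$ with each $d$ the unique minimal-length element of $\Wf_F d$: this is exactly the abstract statement of Proposition \ref{prop:D}(i) applied to the root datum attached to $\M_F$ (i.e. to $\Phi_F$), since the defining condition $d^{-1}\Phi_F^+\subseteq\Phi_{aff}^+$ is the $\Phi_F$-analogue of $d^{-1}\Root^+\subseteq\Root_{aff}^+$. Combined with the length-additivity $\ell(ud)=\ell(u)+\ell(d)$ for $u\in\Wf_F$, one gets that $\I\hat u\hat{\tilde d}\I$, as $u$ runs over $\tilde\Wf_F$, exhausts $\mathbf{G}_F^\circ(\mathfrak{O})\hat{\tilde d}\I$ without repetition, and different $d\in\Dd_F$ give disjoint $\mathbf{G}_F^\circ(\mathfrak{O})$-$\I$ double cosets because the underlying $\W$-cosets $\Wf_F d$ are distinct. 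The only subtlety is to pass from ``distinct $\W$-cosets'' to ``distinct $\mathbf{G}_F^\circ(\mathfrak{O})$-$\I$ double cosets in $\Gp$'', which follows from the Bruhat decomposition of $\Gp$ modulo $\I$ being parametrized by $\tilde\W$ together with $\mathbf{G}_F^\circ(\mathfrak{O})$ being a union of $\I$-double cosets indexed by $\tilde\Wf_F$.

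\medskip

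\textbf{Part (2).} Here I want to show $\I_F\,(d\I d^{-1}\cap\mathbf{G}_F^\circ(\mathfrak{O})) = \I$ for $d\in\Dd_F$. One inclusion is immediate: $\I_F\subseteq\mathbf{G}_F^\circ(\mathfrak{O})$ by \S\ref{parahorics}, and $d\I d^{-1}\cap\mathbf{G}_F^\circ(\mathfrak{O})\subseteq\mathbf{G}_F^\circ(\mathfrak{O})$; but also we need both factors inside $\I$ — wait, $\I_F$ need not be inside $\I$ unless $F=C$, so the right-to-left inclusion is what needs care, and left-to-right needs $\I_F\subseteq\I$, which is false in general. So the correct reading is that the product \emph{equals} $\I$: the step to prove is $\I\subseteq\I_F(d\I d^{-1}\cap\mathbf{G}_F^\circ(\mathfrak{O}))$ and the reverse. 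For the reverse inclusion I would use the Iwahori factorization \eqref{rootsubgroupeq} of $\I_F$ and the decomposition of $\I$: write an element of $\I$ and of $d\I d^{-1}\cap\mathbf{G}_F^\circ(\mathfrak{O})$ in terms of root subgroups $\Uu_{(\root,\ast)}$ and $\Tp^0$ (or $\Tp^1$), and check that the product again lies in $\I$ by comparing the ``levels'' $-\inf_F(\root)$ against $0$ and $1$; the condition $d^{-1}\Phi_F^+\subseteq\Phi_{aff}^+$ controls exactly which conjugated root subgroups $d\Uu_{(\root,0)}d^{-1}$ can land in $\mathbf{G}_F^\circ(\mathfrak{O})$, forcing them to sit in the ``right'' part of the factorization of $\I_F$ and $\I$. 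For the forward inclusion $\I\subseteq\I_F(d\I d^{-1}\cap\mathbf{G}_F^\circ(\mathfrak{O}))$, I would again use \eqref{rootsubgroupeq}: decompose $\I$ according to $\Uu_{(\root,1)}$ for $\root\in\Phi^-$, $\Tp^1$, $\Uu_{(\root,1)}$ for $\root\in\Phi_F^+$, and $\Uu_{(\root,0)}$ for $\root\in\Phi^+-\Phi_F^+$; the first three groups of factors and $\Tp^1$ already lie in $\I_F$ (comparing with the analogous factorization of $\I_F$), while for the last batch one needs that $d\Uu_{(\root,0)}d^{-1}\subseteq\I$ whenever $d^{-1}\root\in\Phi_{aff}^+$, so that those factors lie in $d\I d^{-1}$, and they lie in $\mathbf{G}_F^\circ(\mathfrak{O})$ by Remark \ref{parahoricgene} since $\Uu_F^+$ generates $\I_F$ with $\K_1$. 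Alternatively — and this is probably the cleanest route — I would cite \cite[Proposition 4.13]{OS} directly, since the excerpt flags this as a known result, and only indicate the root-group bookkeeping.

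\medskip

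\textbf{Main obstacle.} The genuinely technical point is Part (2): matching up the product filtration of $\I_F$ coming from \eqref{rootsubgroupeq} with that of $\I$ and with the conjugate $d\I d^{-1}$, and keeping track of the non-commutativity of the root-subgroup products (the orderings are ``arbitrary but fixed''). The condition defining $\Dd_F$ is precisely what is needed to make the relevant positive root subgroups conjugate into $\I$ under $d$, but verifying this cleanly requires the commutator relations among root subgroups and a careful induction on the height of roots in $\Phi^+-\Phi_F^+$. Since the statement is attributed to \cite{OS}, in the write-up I would keep this to a short argument sketch plus the precise citation rather than reproducing the full combinatorial verification.
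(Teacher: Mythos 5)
You should first note that the paper gives no proof of this lemma at all: it is quoted directly from \cite[Remark 4.17 and Proposition 4.13]{OS}, so your closing fallback (cite [OS] and only sketch the bookkeeping) is exactly what the paper does. The issues are in the sketch itself. For part (1) the outline (Bruhat decomposition of $\Gp$ modulo $\I$, the decomposition of $\mathbf{G}_F^\circ(\mathfrak O)$ into $\I$-double cosets indexed by $\tilde\Wf_F$, length additivity $\ell(ud)=\ell(u)+\ell(d)$) is the right reduction, but your justification of the key combinatorial input is off: Proposition \ref{prop:D}(i) applied to the root datum of $\M_F$ takes place inside $\W_F$ and only yields that $\Dd_F\cap\W_F$ represents $\Wf_F\backslash\W_F$ (that is Remark \ref{rema:DF}); what you need is the statement for the parabolic subgroup $\Wf_F$ inside all of $\W$, with the condition $d^{-1}\Phi_F^+\subseteq\Phi_{aff}^+$ taken in the affine roots of the full system $\Phi$ --- this is the more general result of [OS], not an instance of Proposition \ref{prop:D} as stated.

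Part (2) contains a genuine error and a mixed-up computation. You assert that $\I_F\subseteq\I$ is \emph{false} in general: it is true for every standard facet, immediately from \eqref{pr1} (since $F\subseteq\overline C$ gives $\I_F\subseteq\I_C=\I$) or by comparing the factorizations \eqref{rootsubgroupeq}; it is in any case forced by the equality you are trying to prove. More seriously, in the inclusion $\I\subseteq\I_F(d\I d^{-1}\cap\mathbf{G}_F^\circ(\mathfrak O))$ your bookkeeping singles out the wrong factors: the product decomposition you wrote down for $\I$ is in fact that of $\I_F$; the factorization of $\I=\I_C$ has $\Uu_{(\alpha,0)}$ for \emph{all} $\alpha\in\Phi^+$, and the factors not already contained in $\I_F$ are those with $\alpha\in\Phi_F^+$ (at level $0$), not those with $\alpha\in\Phi^+-\Phi_F^+$. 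It is precisely for $\alpha\in\Phi_F^+$ that the hypothesis $d\in\Dd_F$ is used: $d^{-1}(\alpha,0)\in\Phi_{aff}^+$ gives $\Uu_{(\alpha,0)}=\hat d\,\Uu_{d^{-1}(\alpha,0)}\,\hat d^{-1}\subseteq d\I d^{-1}$, while $\Uu_{(\alpha,0)}\subseteq\mathbf{G}_F^\circ(\mathfrak O)$ because $\alpha$ vanishes on $F$; for $\alpha\in\Phi^+-\Phi_F^+$ the condition defining $\Dd_F$ gives you nothing, and nothing is needed there since those factors already lie in $\I_F$. As written, your argument omits the one place where the hypothesis on $d$ is actually needed and invokes it where it is not available (and the conjugation is written in the wrong direction), so the sketch would not go through without this correction; with it, together with the analogous affine-root argument for $d\I d^{-1}\cap\mathbf{G}_F^\circ(\mathfrak O)\subseteq\I$ --- or simply with the citation to \cite[Proposition 4.13]{OS}, as the paper itself does --- the lemma is fine.
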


\begin{rem} - The intersection of $\Dd_F$ with $\W_F$ is the distinguished set of representatives  of $\Wf_F\backslash\W_F$ (see \ref{disting}).\label{rema:DF} \\ - The set  of all $\hat{\tilde d}$ for $d\in \Wf\cap \Dd_F$ is a system of representatives of the double cosets $\mathbf{G}_F^\circ(\mathfrak{O})\backslash \K /\I$.
\end{rem}
  \subsubsection{Hecke algebras}
The universal representation $\XX$  for $\Gp$ was defined in \ref{lasko}. Recall that it is a left module for the pro-$p$ Iwahori-Hecke $k$-algebra $\Hh$ which  is isomorphic to $$\Htg\otimes_\Z k$$ where $\Htg$ is the Hecke ring described in \ref{geneHecke}. Remark that the results of \ref{subsec:fourier} apply.
\medskip

 For  $w\in \tilde \W$ (resp. $g\in\Gp$)  we still denote by $\t_w$ (resp. $\t_g$) its image  in $\Hh$.
 Let  $F$ be a standard facet.
  Extending functions on  ${\mathbf G}_F^\circ(\mathfrak O)$ by zero to $\Gp$ induces a ${\mathbf G}_F^\circ(\mathfrak O)$-equivariant embedding
\begin{equation*}
    \mathbf{X}_F := \ind^{{\mathbf G}_F^\circ(\mathfrak O)}_\I(1) \hookrightarrow \mathbf{X} \ .
\end{equation*}  The $k$-algebra
\begin{equation*}
 \H_F := \EInd_{k[{\mathbf G}_F^\circ(\mathfrak O)]}(\mathbf{X}_F) \cong [\ind^{{\mathbf G}_F^\circ(\mathfrak O)}_\I(1)]^\I \ .
\end{equation*}
is naturally a subalgebra of $\Hh$ via the extension by zero  embedding $[\ind^{{\mathbf G}_F^\circ(\mathfrak O)}_\I(1)]^\I \hookrightarrow \ind^{\Gp}_\I(1)$.  

\begin{pro}\begin{enumerate}\item The finite Hecke algebra $\H_F$  has basis the set of all $\t_w$ for $w\in \tilde \Wf_F$. 
\item It is a Frobenius algebra over $k$. In particular, for any  (left or right) $\H_F$-module $\m$, we have an isomorphism of vector spaces $\Hom_{\H_F}(\m, \H_F)= \Hom_k(\m, k).$
\item The Hecke algebra $\Hh$ is a free  left $\H_F$-module with basis the set of all $\t_{\tilde d}$, $d\in \Dd_F$. 

\end{enumerate}
\label{libertepro} 
\end{pro}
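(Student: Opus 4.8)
The plan is to prove the three assertions in turn; only (ii) rests on an input from outside the excerpt, and (iii) needs a short combinatorial preliminary. \emph{For (i),} I would unwind the definition: $\H_F=[\ind^{\mathbf{G}_F^\circ(\Oo)}_\I(1)]^\I$ is the convolution algebra of $\I$-bi-invariant $k$-valued functions on the compact group $\mathbf{G}_F^\circ(\Oo)$, so a $k$-basis consists of the characteristic functions of the double cosets in $\I\backslash\mathbf{G}_F^\circ(\Oo)/\I$. By the Bruhat decomposition of $\mathbf{G}_F^\circ(\Oo)$ recalled in \S\ref{parahorics}, these are exactly the $\I\hat w\I$ with $w$ running over $\tilde\Wf_F$, and the characteristic function of $\I\hat w\I$ is $\t_w$; under the extension-by-zero embedding it is sent to the element $\t_w$ of $\Hh$. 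Thus (i) is immediate once that decomposition is granted.

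\emph{For (ii),} the observation is that $\H_F$ is the pro-$p$ Iwahori--Hecke $k$-algebra of the finite group $\overline{\mathbf G}_F^\circ(\fq)=\mathbf{G}_F^\circ(\Oo)/\K_1$ relative to its Sylow $p$-subgroup $\I/\K_1$ (the trivial character of $\I$ being inflated from $\I/\K_1$, since $\K_1\trianglelefteq\mathbf{G}_F^\circ(\Oo)$ is pro-$p$). Hecke algebras of finite groups with a (possibly unsaturated) split $BN$-pair in defining characteristic are known to be Frobenius $k$-algebras, and I would invoke this rather than reprove it, flagging that the naive symmetrizing form $\t_w\mapsto\delta_{1,w}$ is degenerate here — $q$ maps to $0$ in $k$ — so that one needs its twisted (``longest-element'') version. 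Granting $\H_F\cong\Hom_k(\H_F,k)$ as left and as right $\H_F$-modules, the stated identity is then formal: for a left module $\m$,
\[
\Hom_{\H_F}(\m,\H_F)\;\cong\;\Hom_{\H_F}\!\big(\m,\Hom_k(\H_F,k)\big)\;\cong\;\Hom_k(\H_F\otimes_{\H_F}\m,k)\;=\;\Hom_k(\m,k),
\]
and symmetrically for right modules.

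\emph{For (iii),} I would first record the combinatorial input: from the length formula \eqref{add} one checks that $\Dd_F$ is precisely the set of minimal-length representatives of $\Wf_F\backslash\W$ and that $\ell(wd)=\ell(w)+\ell(d)$ for all $w\in\Wf_F$, $d\in\Dd_F$ (the case $w=s_\root$, $\root\in\Pi_F$, is the defining condition on $\Dd_F$, and the general case follows by induction on $\ell(w)$, exactly as in Proposition \ref{prop:D}(i)). Since the length on $\tilde\W$ is inflated from $\W$, it follows that every $v\in\tilde\W$ is \emph{uniquely} of the form $v=w\tilde d$ with $w\in\tilde\Wf_F$, $d\in\Dd_F$, and that $\ell(v)=\ell(w)+\ell(\tilde d)$; hence by the braid relation \eqref{braid}, $\t_v=\t_{w\tilde d}=\t_w\,\t_{\tilde d}$. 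Therefore the standard $k$-basis $\{\t_v\}_{v\in\tilde\W}$ of $\Hh$ coincides with the family $\{\t_w\,\t_{\tilde d}\}_{w\in\tilde\Wf_F,\,d\in\Dd_F}$, and combined with (i) this says that the left $\H_F$-linear map $\bigoplus_{d\in\Dd_F}\H_F\to\Hh$, $(a_d)_d\mapsto\sum_d a_d\,\t_{\tilde d}$, carries the basis $(\t_w)_{w\in\tilde\Wf_F}$ of the $d$-th summand to the pairwise distinct standard basis vectors $(\t_{w\tilde d})_w$ of $\Hh$; hence it is bijective, i.e.\ $\Hh$ is free as a left $\H_F$-module on $\{\t_{\tilde d}\}_{d\in\Dd_F}$. (Alternatively one can run this geometrically, starting from the decomposition $\Gp=\coprod_{d\in\Dd_F}\mathbf{G}_F^\circ(\Oo)\hat{\tilde d}\,\I$ of Lemma \ref{413}, splitting $\Hh$ into the left $\H_F$-submodules of functions supported on each piece and identifying each with $\H_F\,\t_{\tilde d}$.)

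I expect (ii) to be the main obstacle: it genuinely depends on the Frobenius property of the finite pro-$p$ Iwahori--Hecke algebra, an external structural fact whose symmetrizing form is more delicate in characteristic $p$ than in the classical case. Once that is in hand, (i) and (iii) are bookkeeping on top of the Bruhat decompositions and the combinatorics of the distinguished set $\Dd_F$.
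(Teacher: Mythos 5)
Your proposal is correct and takes essentially the same route as the paper: (i) is the Bruhat decomposition of $\mathbf{G}_F^\circ(\mathfrak O)$, (ii) invokes the same external Frobenius-algebra input (the paper cites Sawada and Cabanes--Enguehard) followed by the standard dualization, which the paper phrases via the Nakayama automorphism and you via hom-tensor adjunction --- identical as a vector-space statement --- and (iii) is precisely the length-additive decomposition $\W=\Wf_F\,\Dd_F$ plus the braid relations underlying the paper's citation of \cite[Proposition 4.19]{OS}. The only difference is that you write out the details the paper delegates to those references.
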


{
\begin{proof} 
The first point is clear.
For iii, see \cite[Proposition 4.19]{OS}. 
For ii. see \cite[Thm 2.4]{Sawada} (or  \cite[Proposition 6.11]{CE}). Recall that ${\H_F}$ being Frobenius  means that it is finite
dimensional over $k$ and that it carries a $k$-linear form
$\delta$ such that the bilinear form $(a,b) \longmapsto
\delta(ab)$ is nondegenerate.  In particular,
there is a unique map $\iota : {\H_F} \longrightarrow {\H_F}$ satisfying
 $ \delta(\iota(a)b) = \delta(ba)$ for any $a,b \in {\H_F}$.
One shows that $\iota$ is an automorphism of the $k$-algebra ${\H_F}$. For any left or right ${\H_F}$-module $\m$ we let $\iota^\ast \m$, resp.\ $\iota_\ast \m$, denote $\m$ with the new ${\H_F}$-action through the automorphism $\iota$, resp.\ $\iota^{-1}$.
Then for any left, resp.\ right,  ${\H_F}$-module $\m$ it is classical to establish that the map $f\mapsto \delta\circ f$ 
   is an isomorphism of right, resp.\ left,  ${\H_F}$-modules between $\Hom_{\H_F}(\m,{\H_F})$  and $\Hom_k(\iota^\ast \m,k)$ (resp. $\Hom_k(\iota_\ast \m,k)$).

\end{proof}

\begin{rem}\label{ParabParah} The previous definitions and results are valid when replacing
$\Gp$ by a semi-standard Levi subgroup.  
We will denote by $\Hh(\M_F)$ the pro-$p$ Iwahori-Hecke algebra of $\M_F$ with coefficients in $k$. It is isomorphic to $\Hh_\Z(\M_F)\otimes_\Z k$.

As for the finite dimensional Hecke algebras associated to parahoric subgroups of $\M_F$, we will only consider the following situation. Let $F$ be a standard facet and $\M_F$ the associated Levi-subgroup.
By \cite[Lemma 2.9.1]{HainesBC}, $\M_F\cap \K$  is the parahoric subgroup of $\M_F$ corresponding to an 
hyperspecial point $ x_F$ of the building of $\M_F$. The corresponding finite Hecke algebra $\H_{x_F}(\M_F)$ has basis the set of all $\t_{w}^F$ for $w\in \tilde \Wf_F$. 
\end{rem}
\subsubsection{\label{characters}} When $F= x_0$, we  write $\H$ instead of $\H_{x_0}$. 
Consider  a simple $\H$-module. By \cite[(2.11)]{Sawada0} 
it is one dimensional and we denote by $\chi:\H\rightarrow k$ the  corresponding character. Let  $\bar\chi$ be the character of $\overline{\mathbf T}(\fq)$ given by 
$$\bar\chi(t):=\chi(\t_t)$$ and $\epsilon_{\bar \chi}$ the corresponding idempotent  (\S\ref{subsec:fourier}).  We  have $\chi(\epsilon_{\bar\chi})=1$.
Let $\Pi_{\bar\chi}$ denote the set of simple roots $\root\in\Pi$ such that $s_\root\bar\chi=\bar \chi$. 
For $\root\in \Pi$, we have (by the quadratic relations \eqref{Q'}):
 $\chi(\t_{n_\root})=0$ if $\root\in \Pi-\Pi_{\bar\chi}$  and
$\chi(\t_{n_\root})\in \{0, -1\}$ otherwise. Define
$\Pi_\chi$ to be the set of all $\root\in\Pi_{\bar\chi}$ such that  $\chi(\t_{n_\root})=0$.

\medskip


\noindent A $k$-character $\chi$ of $\H$ is parameterized by the following data:\\
- a $k$-character $\bar\chi$ of $\overline{\mathbf T}(\fq)$ and the attached $\Pi_{\bar\chi}$ as above.\\
- a subset $\Pi_\chi$ of $\Pi_{\bar\chi}$ such that  for all $\root\in \Pi$,  we have $\chi(\t_{n_\root})=-1$ if and only if $\root\in \Pi_{\bar\chi}-\Pi_{\chi}$.\\

\subsubsection{\label{subsec:bcn}}Let $(\rho, \V)$ be a weight. Denote by ${\mathcal H}(\Gp, \rho )$  the algebra of $\Gp$-endomorphisms of the compact induction $\ind_\K ^\Gp\rho $.
Choose  and fix $v$ a basis for $\rho^\I$ (it is known that this space is one dimensional \cite[Corollary 6.5]{CL},   see also Theorem \ref{theoCE} below which is drawn from \cite{Cabanes}). Denote by $ \1_{\K, v}$
the function of $\ind_{\K }^\Gp \rho $ with support $\K $ and value $v$ at $1$. It is $\I$-invariant. Since $(\rho ,\V)$ is irreducible, an element  $T$ of  $\cal H (\Gp, \rho )$ is  determined by the image $T(\1_{\K , v})$ of $ \1_{\K , v}$.  
 The restriction  to $(\ind_{\K }^\Gp\rho)^\I$ therefore yields 
an injective morphism  of $k$-algebras
\begin{equation}\label{bcn}\cal H (\Gp, \rho ) \longrightarrow 
\Hom_ {\Hh }(\ind_{\K }^\Gp\rho)^\I, (\ind_{\K }^\Gp\rho)^\I)\: \end{equation}
In \S\ref{prelim}, we will prove that this is an isomorphism.
We first identify the structure of  $\Hh$-module $(\ind_{\K }^\Gp\rho)^\I$.

\begin{lemma} \label{apple}

 We have  a $\Hh$-equivariant isomorphism given by
\begin{equation}\label{isofond0}\begin{array}{ccc}\chi\otimes _\H \Hh&\cong& (\ind_{\K }^\Gp\rho )^{\I}\cr 1\otimes 1&\mapsto& \1_{\K , v}\cr   \end{array}\end{equation}

\end{lemma}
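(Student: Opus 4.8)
The plan is to construct the map $\chi\otimes_\H\Hh\to(\ind_\K^\Gp\rho)^\I$ sending $1\otimes 1$ to $\1_{\K,v}$ and then verify it is a $\Hh$-equivariant isomorphism. First I would observe that $(\ind_\K^\Gp\rho)^\I$ is a right $\Hh$-module (equivalently a left $\Hh$-module after the usual anti-involution convention; I will follow the paper's convention), and that $\1_{\K,v}$ is annihilated on the left by the augmentation-type ideal determined by $\chi$: concretely, for $w\in\tilde\Wf_{x_0}$ the Hecke operator $\t_w$ acts on $\1_{\K,v}$ through $\H=\H_{x_0}$, and since $v$ is a basis of the one-dimensional space $\rho^\I$ and $\rho$ descends to $\overline{\mathbf G}_{x_0}(\fq)$, the action of $\H$ on $\rho^\I=k\cdot v$ is exactly the character $\chi$ (this is the correspondence between weights and characters recalled in \S\ref{characters}, from \cite{CL}, \cite{Sawada0}). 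Hence $\t_w\cdot\1_{\K,v}=\chi(\t_w)\1_{\K,v}$ for $w\in\tilde\Wf_{x_0}$, which is precisely the relation needed for the universal property of $\chi\otimes_\H\Hh$ to produce a well-defined $\Hh$-equivariant map $\varphi:\chi\otimes_\H\Hh\to(\ind_\K^\Gp\rho)^\I$, $1\otimes h\mapsto \1_{\K,v}\cdot h$.

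Next I would prove surjectivity. By the Cartan-type decomposition and Lemma \ref{413}(i) with $F=x_0$, the double cosets $\K\backslash\Gp/\I$ are represented by $\hat{\tilde d}$ for $d\in\Dd=\Dd_{x_0}$, so $(\ind_\K^\Gp\rho)^\I$ is spanned as a $k$-vector space by the $\I$-invariant functions supported on single cosets $\K\hat{\tilde d}\I$. The function $\1_{\K,v}\cdot\t_{\tilde d}$ is supported on $\K\hat{\tilde d}\I$ (using $\ell(\tilde d)$ and the braid/positivity structure, together with Lemma \ref{413}(ii) which gives $\I_{x_0}(d\I d^{-1}\cap\K)=\I$ ensuring the image is nonzero and exactly fills that coset), so each basis function is in the image of $\varphi$; hence $\varphi$ is surjective.

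For injectivity I would use a dimension/freeness count. By Proposition \ref{libertepro}(iii), $\Hh$ is free as a left $\H$-module with basis $\{\t_{\tilde d}\}_{d\in\Dd}$, so $\chi\otimes_\H\Hh$ has $k$-dimension $|\Dd|$ (if $\Dd$ is finite) or more precisely a $k$-basis $\{1\otimes\t_{\tilde d}\}_{d\in\Dd}$. On the other side, $(\ind_\K^\Gp\rho)^\I$ has $k$-basis indexed by $\K\backslash\Gp/\I$, i.e.\ by $\Dd$ again via Lemma \ref{413}(i), each contributing exactly one dimension because $\rho^\I$ is one-dimensional. Since $\varphi$ sends the spanning set $\{1\otimes\t_{\tilde d}\}$ onto the basis $\{\1_{\K,v}\cdot\t_{\tilde d}\}$ of $(\ind_\K^\Gp\rho)^\I$ (shown in the surjectivity step to be, up to nonzero scalars, the coset-supported basis functions), $\varphi$ carries a basis to a basis and is therefore an isomorphism.

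The main obstacle I anticipate is the surjectivity step: one must check carefully that $\1_{\K,v}\cdot\t_{\tilde d}$ is genuinely supported on the coset $\K\hat{\tilde d}\I$ and nonzero there, which requires controlling how the Hecke operator $\t_{\tilde d}$ moves the support and confirming the image vector is nonzero. This is exactly where Lemma \ref{413}(ii) and the compatibility $\ell=\ell_{x_0}$ on $\tilde\Wf_{x_0}$ enter; one may need the decomposition $\tilde d=\tilde e\cdot\tilde w$ with $e\in\X_*^+$ or an induction on $\ell(\tilde d)$ via the braid relations \eqref{braid} to reduce to generators, together with the fact that applying $\t_{n_A}$ for a single affine reflection either keeps a coset or moves it to an adjacent one. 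Once that combinatorial bookkeeping is in place the rest is formal.
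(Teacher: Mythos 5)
Your proposal is correct and follows essentially the same route as the paper's proof: show that $\1_{\K,v}\cdot\t_{\tilde d}$ is the $\I$-invariant function supported on $\K\hat{\tilde d}\I$ with value $v$ at $\hat{\tilde d}$ (via Lemma \ref{413}), that these functions for $d\in\Dd$ form a basis of $(\ind_{\K}^\Gp\rho)^\I$, and that $\{1\otimes\t_{\tilde d}\}_{d\in\Dd}$ is a basis of $\chi\otimes_\H\Hh$ by Proposition \ref{libertepro}iii. The only point to tighten is that the one-dimensionality of the $\I$-invariants supported on a given coset $\K\hat{\tilde d}\I$ rests on the identity $\V^{\hat{\tilde d}\I\hat{\tilde d}^{-1}\cap\K}=\V^{\I}$ (Lemma \ref{413}ii together with triviality of $\rho$ on $\K_1$), not on $\dim\rho^\I=1$ alone, and once this is in place no induction on $\ell(\tilde d)$ or appeal to the braid relations is needed for the support computation.
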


\begin{proof} We make the following remark: for $d\in \Dd$,  the action of  $\t_{\tilde d}$  on the right on  $\1_{\K , v}$  gives the unique $\I$-invariant element of $\ind_{\K }^\Gp\rho $ with support in $\K \hat {\tilde d}\: \I$ and value $v$ at $\hat {\tilde d} $;  the set of all  such elements when $d$ ranges over $\Dd$  is  a basis for $(\ind_{\K }^\Gp\rho )^{\I}$.


The key observation  to verify this claim is that, by  Lemma \ref{413}, we have  $\K_1(\tilde d \I \tilde d^{-1}\cap\K)=\I$. The first point of the remark follows easily.
Furthermore,  an $\I$-invariant function $f\in  (\ind_{\K }^\Gp\rho )^{\I}$ is determined by its values at all $\hat {\tilde d}$'s for $ d\in   \Dd$ which  are  $\tilde  d\I \tilde d^{-1}\cap \K$-invariant vectors of $\V$: these vectors are $\I$-invariants and therefore  proportional to $v$. It proves the second point. 

The surjectivity of the map \eqref{isofond0}  follows easily. The injectivity comes from the fact that  a basis for $\chi\otimes _\H \Hh$ is given by all $1\otimes \t_{\tilde d}$ for $d\in\Dd$ (Proposition \ref{libertepro}).
\end{proof}

\begin{rem}\label{support}Recall that an element of $\cal H (\Gp, \rho )$ can   be seen as a function  with compact support  $f:\G\rightarrow \EInd_{\k}(\V)$ such that $f(kgk')=\rho (\kappa) f(g) \rho  (\kappa')$ for any $g\in \Gp$, $\kappa,\kappa'\in \K $.
To such a function $f$ corresponds the Hecke operator  $T_f\in \cal H (\Gp, \rho )$ that sends $\1_{\K , v}$ on
the element of $\ind_{\K }^\Gp\rho  $ defined by $g\mapsto f(g)\, . \,v$. 
Reciprocally, to an element $T\in \cal H (\Gp, \rho ) $ is  associated the function
$f_T:\G\rightarrow \EInd_{k}(\V)$ defined by $f_T(g): w\mapsto T(\1_{\K , w}) [g]$  for any $g\in \Gp$, where
$\1_{\K , w}\in\ind_{\K}^\Gp \rho$ is the unique function with support $\K$ and value $w\in \V$ at $1$.
For  $\lambda\in \X_*(\T)$, the function $f_T$ has support in $\K \lambda(\varpi)\K$ if and only if
$T(\1_{\K , v})\in \ind_{\K}^\Gp \rho$ has support in $\K\lambda(\varpi)\K$.

\end{rem}

\medskip

\subsection{Categories of Hecke modules and  of representations of parahoric subgroups\label{categories}}
Let $F$ be a standard facet. 
We consider the abelian  category of  (smooth) representations of ${\mathbf G}_F^\circ(\mathfrak O)$.
Define the functor $\dagger$   that associates to  a smooth representation $\V$  of ${\mathbf G}_F^\circ(\mathfrak O)$
the subrepresentation $\V^\dagger$  generated by $\V^\I$.
Consider the  following  categories of representations:

\begin{enumerate}
\item[a)]  $\Rr(F)$  is the category of   finite dimensional    representations of ${\mathbf G}_F^\circ(\mathfrak O)$ with
trivial action of the normal subgroup $\I_F$. It  is equivalent to the (abelian) category of finite dimensional representations of the finite  reductive   group  $\overline{\mathbf G}_F ^{red}(\mathbb F_q)={\mathbf G}_F^\circ(\mathfrak O)/\I_F$ (see \cite[Proof of Lemma 4.9]{OS}).  The  irreducible representations of ${\mathbf G}_F^\circ(\mathfrak O)$ are the simple objects  in $\Rr(F)$.\\

Note that  $\dagger$  induces a functor $\dagger: \Rr(F)\rightarrow \Rr(F)$. 
The category $\Rr(F)$ is also  equipped with the  endofunctor $ \vee:\: \V\mapsto \V^\vee $  
associating to $\V$ the contragredient representation  $\V^\vee=\Hom_k(\V, k)$. 
Since $\vee$ is  anti-involutive,   $\V^\vee$ is irreducible if and only if $\V$ is irreducible.
\\
\item[b)]  $\Rr^\dagger(F)$ is the full subcategory of  $\Rr(F)$ image of the functor $\dagger$. 
Any irreducible representation of ${\mathbf G}_F^\circ(\mathfrak O)$ is an object  in $\Rr^\dagger(F)$.
By adjunction,
a representation $\V\in  \Rr(F)$ is an object of  $\Rr^\dagger(F)$ if and only if $\V$ sits in an exact sequence in $\Rr(F)$ of
the form\begin{equation}\label{quotient}\XX_F^\ell\rightarrow \V\rightarrow 0\end{equation} for some $\ell\in \N$, $\ell\geq 1$.\\

\item[c)]  $\BB(F)$   is the  full (additive) subcategory of  $\Rr^\dagger (F)$ whose objects are the $\V\in \Rr^\dagger(F)$ such that
$\V^\vee$ is also an object in $\Rr^\dagger(F)$. 
Any irreducible representation of ${\mathbf G}_F^\circ(\mathfrak O)$ is an object  in $\BB(F)$. By the following proposition, this definition coincides with \cite[Definition 1]{Cabanes}.
\end{enumerate}

\begin{pro}\phantomsection
\begin{itemize}
\item[i.]  In $\Rr(F)$, we have  $\XX_F\cong \XX_F^\vee$.
\item[ii.] A representation $\V\in  \Rr(F)$ is an object of  $\mathscr B(F)$ if and only if 
there are  $ \ell, m\geq 1$ and $f\in \Hom_{\Rr(F)}(\XX_F^m, \XX_F^\ell)$ such that $\V= \Im(f)$.

\end{itemize}
 \label{contra}

\end{pro}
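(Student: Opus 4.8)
\textbf{Proof proposal for Proposition \ref{contra}.}

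The plan is to prove the two items separately, with i.\ serving as an auxiliary tool for ii. For i., I would exhibit an explicit $k[{\mathbf G}_F^\circ(\mathfrak O)]$-equivariant perfect pairing on $\XX_F=\ind^{{\mathbf G}_F^\circ(\mathfrak O)}_\I(1)$. Since $\XX_F$ is the space of $k$-valued functions on $\I\backslash {\mathbf G}_F^\circ(\mathfrak O)$ with the right translation action, and ${\mathbf G}_F^\circ(\mathfrak O)$ is compact, the pairing $(f_1,f_2)\mapsto \sum_{x\in \I\backslash {\mathbf G}_F^\circ(\mathfrak O)} f_1(x) f_2(x)$ is ${\mathbf G}_F^\circ(\mathfrak O)$-invariant and nondegenerate, which identifies $\XX_F$ with $\XX_F^\vee$ in $\Rr(F)$. (One should check the action is trivial on $\I_F$, which it is, since $\I_F\subseteq \I$ and the function space is built from $\I\backslash{\mathbf G}_F^\circ(\mathfrak O)$; thus $\XX_F$ genuinely lies in $\Rr(F)$.) Alternatively, invoke the Frobenius property of $\H_F$ from Proposition \ref{libertepro}.ii together with $\XX_F^\I\cong \H_F$ as a module, but the direct pairing argument is cleaner.

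For ii., I would argue as follows. Suppose first $\V\in\BB(F)$, so both $\V$ and $\V^\vee$ lie in $\Rr^\dagger(F)$. By the characterization \eqref{quotient} of $\Rr^\dagger(F)$, there is a surjection $\XX_F^\ell\twoheadrightarrow \V$ for some $\ell\geq 1$, and likewise a surjection $\XX_F^m\twoheadrightarrow \V^\vee$ for some $m\geq 1$. Dualizing the latter and using i.\ ($\XX_F^\vee\cong\XX_F$), together with the fact that $\vee$ is exact and anti-involutive on $\Rr(F)$ (all objects are finite-dimensional), I get an injection $\V\cong \V^{\vee\vee}\hookrightarrow (\XX_F^\vee)^m\cong \XX_F^m$. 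Composing the surjection $\XX_F^\ell\twoheadrightarrow \V$ with this injection $\V\hookrightarrow \XX_F^m$ yields $f\in \Hom_{\Rr(F)}(\XX_F^\ell,\XX_F^m)$ with $\Im(f)=\V$ — note the roles of $\ell$ and $m$ may need swapping to match the exact statement, but that is cosmetic. Conversely, if $\V=\Im(f)$ for some $f\in\Hom_{\Rr(F)}(\XX_F^m,\XX_F^\ell)$, then $\V$ is a quotient of $\XX_F^m$, hence $\V\in\Rr^\dagger(F)$ by \eqref{quotient}; and $\V$ is also a subobject of $\XX_F^\ell$, so dualizing gives that $\V^\vee$ is a quotient of $(\XX_F^\ell)^\vee\cong\XX_F^\ell$ by i., whence $\V^\vee\in\Rr^\dagger(F)$ as well. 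Therefore $\V\in\BB(F)$.

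The one genuinely delicate point — the main obstacle — is the implication "$\V\in\Rr^\dagger(F)$ iff $\V$ is a quotient of some $\XX_F^\ell$", i.e.\ the exactness characterization \eqref{quotient}, which is asserted "by adjunction" in part b) of the definition but deserves care: one direction is that $\V^\dagger$, being generated by $\V^\I$, receives a surjection from a sum of copies of the representation generated by the trivial $\I$-representation, namely $\XX_F$ (by Frobenius reciprocity for the compact induction $\ind^{{\mathbf G}_F^\circ(\mathfrak O)}_\I$), and that if $\V=\V^\dagger$ then finitely many copies suffice by finite-dimensionality; the other direction, that a quotient of $\XX_F^\ell$ is generated by its $\I$-invariants, uses that $\XX_F$ is itself generated by $\1_{\I}^\I$. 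Provided that characterization is in hand (and it is, being part of the setup in b)), everything else is formal manipulation with the exact anti-involutive functor $\vee$ and the self-duality i. I would also double-check the index bookkeeping in ii.\ so that the final $f$ has exactly the source $\XX_F^m$ and target $\XX_F^\ell$ as written, reindexing the two surjections accordingly.
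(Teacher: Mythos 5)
Your proposal is correct and follows essentially the same route as the paper: part i.\ via the self-duality of $\XX_F$ (your perfect pairing is just the pairing form of the paper's map sending $\1_\I$ to evaluation at $1$), and part ii.\ by combining the quotient characterization \eqref{quotient} of $\Rr^\dagger(F)$ with its dual, obtained by applying the exact anti-involutive functor $\vee$ and i., exactly as in the paper's observation \eqref{sousmod}.
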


\begin{proof} i. Let $\phi:\XX_F\rightarrow \XX_F^\vee$ be the unique  ${\mathbf G}_F^\circ(\mathfrak O)$-equivariant map sending
the characteristic function of $\I$ onto $ \XX_F\rightarrow k, f\mapsto f(1)$. One easily checks that it is  well-defined, injective, and therefore surjective.
ii. Let  $\V\in  \Rr(F)$. We deduce the claim from  by i. by observing  that,  $\V^\vee\in \Rr^\dagger(F)$ if and only if  $\V$ sits in an exact sequence 
 in $\Rr(F)$ of
the form\begin{equation}\label{sousmod}0\rightarrow \V\rightarrow \XX_F^\ell\end{equation} for some $\ell\in \N$, $\ell\geq 1$.

\end{proof}

\begin{rem}\label{IrrInB} An irreducible representation $\V$  of ${\mathbf G}_F^\circ(\mathfrak O)$ is an  object  in $\BB(F)$. The work of Carter and Lusztig \cite{CL} describes $\V$  explicitly  as the image of a ${\mathbf G}_F^\circ(\mathfrak O)$-equivariant morphism $\XX_F\rightarrow \XX_F$.

\end{rem}
Consider the category $\Mod(\H_F)$ of finite dimensional modules over $\H_F$. The functor 
\begin{equation}\Rr^\dagger (F)\rightarrow \Mod(\H_F), \: \V\mapsto \V^\I\label{func}\end{equation} is faithful.
The following theorem is  \cite[Theorem  2]{Cabanes} the proof of which  relies on the fact that $\H_F$ is self-injective (see Proposition \ref{libertepro}). 

\begin{theorem}\label{theoCE}
The functor \eqref{func} induces an equivalence between $\BB(F)$ and $\Mod(\H_F)$.\end{theorem}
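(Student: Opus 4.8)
The plan is to deduce the equivalence from Theorem \ref{theoCE}'s hypothesis (self-injectivity, indeed Frobeniusness, of $\H_F$ established in Proposition \ref{libertepro}) together with the already-recorded structural facts about $\Rr^\dagger(F)$ and $\BB(F)$. First I would observe that the functor \eqref{func} is exact on $\Rr(F)$ (taking $\I$-invariants is exact because $\I_F$ acts trivially and $\I/\I_F$ is a $p$-group acting on $\fp$-vector spaces, so $(-)^{\I/\I_F}$ is exact) and is faithful on $\Rr^\dagger(F)$ as already stated. The crucial input is that $\XX_F^\I=\H_F$ as an $\H_F$-module (this is the isomorphism $\H_F\cong[\ind^{\mathbf G_F^\circ(\mathfrak O)}_\I(1)]^\I$ from \S\ref{parahorics}), so the functor sends $\XX_F$ to the free rank-one module and hence sends finite direct sums $\XX_F^\ell$ to free modules $\H_F^\ell$.

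The key step is \emph{full faithfulness}. For faithfulness we already have it on all of $\Rr^\dagger(F)$, so it remains to show fullness on $\BB(F)$: given $\V,\W\in\BB(F)$ and an $\H_F$-linear map $\psi:\V^\I\to\W^\I$, I must lift it to a $\mathbf G_F^\circ(\mathfrak O)$-equivariant map. By Proposition \ref{contra}.ii, write $\V=\Im(f)$ with $f\in\Hom(\XX_F^m,\XX_F^\ell)$ and $\W=\Im(g)$ with $g\in\Hom(\XX_F^{m'},\XX_F^{\ell'})$. Applying $(-)^\I$ gives exact sequences of $\H_F$-modules $\H_F^m\xrightarrow{f^\I}\H_F^\ell$ with image $\V^\I$, and similarly for $\W$. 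Now one uses projectivity of $\H_F^m$ as an $\H_F$-module (free, hence projective) to lift $\V^\I\hookrightarrow\H_F^\ell\twoheadrightarrow\W^\I$-type data: more precisely, the composite $\H_F^m\twoheadrightarrow\V^\I\xrightarrow{\psi}\W^\I$ lifts through the surjection $\H_F^{m'}\twoheadrightarrow\W^\I$ to a map $\H_F^m\to\H_F^{m'}$, which by fullness of the functor on free modules (maps between free modules are given by matrices over $\H_F=\EInd(\XX_F)$, hence come from maps $\XX_F^m\to\XX_F^{m'}$) is $\tilde h^\I$ for some $\tilde h:\XX_F^m\to\XX_F^{m'}$. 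One then checks, using self-injectivity/Frobeniusness of $\H_F$ to handle the kernel side symmetrically (this is exactly where the condition $\V^\vee\in\Rr^\dagger(F)$, i.e.\ $\V\in\BB(F)$, is needed — it provides the second exact sequence $0\to\V\to\XX_F^\ell$ from Proposition \ref{contra}.ii and its image under $(-)^\I$), that $g\circ\tilde h\circ(\text{section})$ descends to a well-defined map $\V\to\W$ inducing $\psi$. The faithfulness already in hand then shows this lift is unique, giving fullness.

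For \emph{essential surjectivity}: given a finite-dimensional $\H_F$-module $\m$, choose a finite free presentation $\H_F^m\xrightarrow{a}\H_F^\ell\to\m\to0$; since $\H_F$ is self-injective (Frobenius), dualizing and using that $\H_F\cong\H_F^\vee$ as modules up to the automorphism $\iota$ (Proposition \ref{libertepro}.ii), one also gets $\m$ sitting in $0\to\m\to\H_F^{\ell'}\to\H_F^{m'}$, i.e.\ $\m=\Im(b)$ for a map $b$ between free modules. Realize $a$ (resp.\ the relevant map) as $f^\I$ for $f:\XX_F^m\to\XX_F^\ell$ via $\EInd(\XX_F)=\H_F$; then $\V:=\Im(f)$ lies in $\BB(F)$ by Proposition \ref{contra}.ii, and $\V^\I=\Im(f^\I)=\m$ because $(-)^\I$ is exact. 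Hence $\m$ is in the essential image.

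\textbf{Main obstacle.} The delicate point is the fullness argument: lifting an $\H_F$-linear map between the $\I$-invariants to an honest morphism of representations, and checking it is well-defined (independent of the chosen presentations and lifts). This is precisely where self-injectivity of $\H_F$ — equivalently the Frobenius property from Proposition \ref{libertepro} — must be used twice, once for each of the two exact sequences \eqref{quotient} and \eqref{sousmod} that characterize membership in $\BB(F)$; the functor $\dagger$ and the identity $\XX_F\cong\XX_F^\vee$ (Proposition \ref{contra}.i) are what make the two sides compatible. Everything else (exactness and faithfulness of \eqref{func}, matrix description of $\Hom$ between free modules) is formal.
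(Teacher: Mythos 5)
First, a point of comparison: the paper does not prove Theorem \ref{theoCE} at all --- it quotes it as \cite[Theorem 2]{Cabanes}, remarking only that the proof rests on the self-injectivity of $\H_F$ (Proposition \ref{libertepro}). So you are not paralleling an argument in the paper but attempting to reprove Cabanes's theorem; that is legitimate, but your sketch breaks at its foundation. The claim that \eqref{func} is exact because $\I/\I_F$ is a $p$-group acting on $\fp$-vector spaces is false: in characteristic $p$, invariants under a finite $p$-group are only \emph{left} exact. Concretely, $(-)^\I=\Hom_{k[\overline{\mathbf G}_F ^{red}(\mathbb F_q)]}(\XX_F,-)$ and $\XX_F$ is not projective (its restriction to the Sylow $p$-subgroup $\I/\I_F$ contains the trivial module as a Mackey summand); already for a $p$-group $P$ the augmentation $k[P]\to k$ is not surjective on $P$-invariants. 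This failure of exactness --- and of fullness on all of $\Rr^\dagger(F)$, as the paper notes right after the theorem, citing \cite{RV} --- is precisely why the equivalence only holds on $\BB(F)$ and why the statement is a genuine theorem rather than a formality.

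The false exactness is then invoked at exactly the two places where the real work lies. In both your fullness and essential surjectivity steps you identify $\V^\I$ with $\Im(f^\I)$ when $\V=\Im(f)$ for $f:\XX_F^m\to\XX_F^\ell$, i.e.\ you assume every $\I$-invariant vector of $\Im(f)$ lifts to an $\I$-invariant vector of $\XX_F^m$ (equivalently, that $\Hom_{\mathbf G_F^\circ(\mathfrak O)}(\XX_F,\XX_F^m)\to\Hom_{\mathbf G_F^\circ(\mathfrak O)}(\XX_F,\V)$ is onto). That is one of the main lemmas that must be extracted from the self-injectivity of $\H_F$; it cannot be obtained from exactness, which fails. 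Likewise, the crux of fullness --- that your lift $\tilde h:\XX_F^m\to\XX_F^{m'}$ carries $\Ker(\XX_F^m\twoheadrightarrow\V)$ into the kernel on the other side, so that it descends and induces the given $\psi$ (note also that there is no $\mathbf G_F^\circ(\mathfrak O)$-equivariant ``section'' available) --- is exactly where self-injectivity and the embedding $\V\hookrightarrow\XX_F^\ell$ from Proposition \ref{contra}.ii must actually be used, and your proposal only asserts that ``one then checks'' this. As written, the Frobenius property of $\H_F$, which you correctly identify as the essential hypothesis, never enters any concrete argument; the two steps where it must do the work are the ones left open. To repair the proof you would have to establish these lemmas (essentially redoing \cite[Theorem 2]{Cabanes}); otherwise, cite Cabanes as the paper does.
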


\begin{rem}
 In particular,
\eqref{func} is faithful and essentially surjective. It is not full in  general (see \cite{RV}). 
\end{rem}

For  $ \V$ in $\Rr(F)$ we consider the (compactly) induced representation 
$\ind{\,}_{{\mathbf G}_F^\circ(\mathfrak O)}^\K (\V)$.

\begin{lemma}
\begin{enumerate}
\item  $\ind{\,}_{{\mathbf G}_F^\circ(\mathfrak O)}^\K (\V)$ is a representation in $ \Rr(x_0)$.
\item   We  have $(\ind{\,}_{{\mathbf G}_F^\circ(\mathfrak O)}^\K (\V))^\dagger= \ind{\,}_{{\mathbf G}_F^\circ(\mathfrak O)}^\K (\V^\dagger)$ in $\Rr(x_0)$.
\end{enumerate}
 \label{IndparDagger} 
\end{lemma}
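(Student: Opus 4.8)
The plan is to prove the two assertions about $\ind_{\mathbf{G}_F^\circ(\mathfrak O)}^\K(\V)$ directly, using the concrete description of $\K$ as a disjoint union of double cosets modulo $\mathbf{G}_F^\circ(\mathfrak O)$ on the left and $\I$ on the right (Remark \ref{rema:DF}, second bullet), together with the basic properties of $\I_F$, $\K_1$ and $\Uu_F^+$ recorded in \S\ref{parahorics}.

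For part (1), I would first observe that $\ind_{\mathbf{G}_F^\circ(\mathfrak O)}^\K(\V)$ is visibly a finite-dimensional representation of $\K$ since $\mathbf{G}_F^\circ(\mathfrak O)$ has finite index in $\K$ (the index being $\#(\Wf\cap\Dd_F)\cdot\#(\mathbf{G}_F^\circ(\mathfrak O)\backslash\K/\I)$-type bookkeeping, but finiteness is all that is needed) and $\V$ is finite-dimensional. The only real point is that $\K_1$ acts trivially. Since $\V\in\Rr(F)$ has $\I_F$ acting trivially and $\I_{x_0}=\K_1$, one has $\K_1=\I_{x_0}\subseteq\I_F$ by \eqref{pr1} (as $x_0\in\overline F$), so $\V$ is in particular $\K_1$-trivial. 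Now $\K_1$ is normal in $\K$, hence $g^{-1}\K_1 g=\K_1$ for all $g\in\K$; therefore for a function $f$ in the induced space and $\kappa\in\K_1$, $(\kappa\cdot f)(g)=f(g\kappa)=f((g\kappa g^{-1})g)=\rho'(\,\cdot\,)$ collapses because $g\kappa g^{-1}\in\K_1\subseteq\mathbf{G}_F^\circ(\mathfrak O)$ acts trivially on $\V$. So $\K_1$ acts trivially on $\ind_{\mathbf{G}_F^\circ(\mathfrak O)}^\K(\V)$, i.e.\ it lies in $\Rr(x_0)$.

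For part (2), the inclusion $\ind_{\mathbf{G}_F^\circ(\mathfrak O)}^\K(\V^\dagger)\subseteq(\ind_{\mathbf{G}_F^\circ(\mathfrak O)}^\K(\V))^\dagger$ follows from exactness and the fact that induction is a $\K$-equivariant functor: applying $\ind_{\mathbf{G}_F^\circ(\mathfrak O)}^\K(-)$ to a surjection $\XX_F^\ell\twoheadrightarrow\V^\dagger$ (which exists by \eqref{quotient}) and noting $\ind_{\mathbf{G}_F^\circ(\mathfrak O)}^\K(\XX_F)=\ind_{\mathbf{G}_F^\circ(\mathfrak O)}^\K\ind_\I^{\mathbf{G}_F^\circ(\mathfrak O)}(1)=\ind_\I^\K(1)=\XX_{x_0}$ by transitivity of induction, we see $\ind_{\mathbf{G}_F^\circ(\mathfrak O)}^\K(\V^\dagger)$ is a quotient of $\XX_{x_0}^\ell$, hence generated by its $\I$-fixed vectors, hence contained in the $\dagger$-subrepresentation of $\ind_{\mathbf{G}_F^\circ(\mathfrak O)}^\K(\V)$; and since $\ind_{\mathbf{G}_F^\circ(\mathfrak O)}^\K(\V^\dagger)$ is $\K$-stable it is contained in $(\ind_{\mathbf{G}_F^\circ(\mathfrak O)}^\K(\V))^\dagger$. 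For the reverse inclusion I expect the main obstacle: one must show $(\ind_{\mathbf{G}_F^\circ(\mathfrak O)}^\K(\V))^\I$ is already contained in $\ind_{\mathbf{G}_F^\circ(\mathfrak O)}^\K(\V^\dagger)$, so that the subrepresentation it generates is too. Here I would use Mackey theory: decompose $\ind_{\mathbf{G}_F^\circ(\mathfrak O)}^\K(\V)$ restricted to $\I$ along the double cosets $\mathbf{G}_F^\circ(\mathfrak O)\backslash\K/\I$ represented by $\hat{\tilde d}$, $d\in\Wf\cap\Dd_F$, so that $(\ind_{\mathbf{G}_F^\circ(\mathfrak O)}^\K(\V))^\I=\bigoplus_d \V^{\,\hat{\tilde d}\,\I\,\hat{\tilde d}^{-1}\cap\,\mathbf{G}_F^\circ(\mathfrak O)}$ — the summand indexed by $d$ being functions supported on $\mathbf{G}_F^\circ(\mathfrak O)\hat{\tilde d}\I$ with value at $\hat{\tilde d}$ a vector fixed by $\hat{\tilde d}\I\hat{\tilde d}^{-1}\cap\mathbf{G}_F^\circ(\mathfrak O)$. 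By Lemma \ref{413}(2), $\I_F(d\I d^{-1}\cap\mathbf{G}_F^\circ(\mathfrak O))=\I$, and in fact one shows $d\I d^{-1}\cap\mathbf{G}_F^\circ(\mathfrak O)\supseteq \I$-worth of unipotent/torus part forcing the relevant fixed space to lie in $\V^\I\subseteq\V^\dagger$; more precisely the $\I$-fixed vectors in the $d$-summand are precisely (the $\hat{\tilde d}$-translates of) vectors in $\V^\I$, which generate $\V^\dagger$. Then the subrepresentation of $\ind_{\mathbf{G}_F^\circ(\mathfrak O)}^\K(\V)$ generated by all these is exactly $\ind_{\mathbf{G}_F^\circ(\mathfrak O)}^\K(\V^\dagger)$, because $\mathbf{G}_F^\circ(\mathfrak O)$ already moves the $\V^\I$ inside a fixed fiber around to span $\V^\dagger$ there, and $\K$ permutes the fibers transitively over $\mathbf{G}_F^\circ(\mathfrak O)$-cosets. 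Assembling both inclusions gives the claimed equality in $\Rr(x_0)$.
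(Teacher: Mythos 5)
Your proof is correct and follows essentially the same route as the paper: part (1) via normality of $\K_1\subseteq\I_F$ in $\K$, and part (2) via the Mackey decomposition of $(\ind_{\mathbf{G}_F^\circ(\mathfrak O)}^\K\V)^\I$ along the double coset representatives of Remark \ref{rema:DF}, where Lemma \ref{413}ii together with the triviality of the $\I_F$-action on $\V$ forces the value at each representative $\hat{\tilde d}$ to lie exactly in $\V^\I\subseteq\V^\dagger$. The only cosmetic difference is that you get the inclusion $\ind_{\mathbf{G}_F^\circ(\mathfrak O)}^\K(\V^\dagger)\subseteq(\ind_{\mathbf{G}_F^\circ(\mathfrak O)}^\K\V)^\dagger$ by applying transitivity of induction to a presentation $\XX_F^\ell\twoheadrightarrow\V^\dagger$, whereas the paper exhibits the $\I$-invariant generators (functions supported on $\mathbf{G}_F^\circ(\mathfrak O)$ with value in $\V^\I$ at $1$) directly.
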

\begin{proof}  It is clear that both $\ind{\,}_{{\mathbf G}_F^\circ(\mathfrak O)}^\K (\V)$ and $\ind{\,}_{{\mathbf G}_F^\circ(\mathfrak O)}^\K (\V^\dagger)$ are in $\Rr(x_0)$ because $\K_1$ is normal in  $\K$ and contained in $\I_F$. Furthermore
$\ind{\,}_{{\mathbf G}_F^\circ(\mathfrak O)}^\K (\V^\dagger)$  is an object in $\Rr^\dagger(x_0)$: it is generated as a representation of $\K$ by the functions with support in  ${\mathbf G}_F^\circ(\mathfrak O)$ taking value in $\V^\I$ at $1$.
It remains to show that  the natural injective morphism of representations of $\K$
\begin{equation}\label{identification}\ind{\,}_{{\mathbf G}_F^\circ(\mathfrak O)}^\K (\V^\dagger)\rightarrow  (\ind{\,}_{{\mathbf G}_F^\circ(\mathfrak O)}^\K (\V))^\dagger\end{equation} is surjective: by Mackey decomposition, an $\I$-invariant function $f\in  \ind{\,}_{{\mathbf G}_F^\circ(\mathfrak O)}^\K (\V)$ is completely determined by its values at all $\kappa$ in a system of representatives of
the double cosets ${\mathbf G}_F^\circ(\mathfrak O)\backslash \K/\I$ and the value of $f$ at $\kappa$ can be any element in 
$\V^{{\mathbf G}_F^\circ(\mathfrak O)\cap \kappa \I \kappa^{-1}}= \V^{\lp \I_F, {\mathbf G}_F^\circ(\mathfrak O)\cap \kappa \I \kappa^{-1}\rp}$. Choose the system of representatives given by Remark \ref{rema:DF}ii. Then by Lemma \ref{413}ii,  the value of $f$  at $\kappa$ can be any value in $\V^\I$ and $f$ lies in the image of \eqref{identification}.

\end{proof}

\subsection{\label{prelim}Spherical Hecke algebra attached to a weight}

  \medskip
  
Let $(\rho , \V)$ be a weight and $\chi: \H\rightarrow k$ the corresponding character.  By Cartan decomposition (\S\ref{cartan}), the compact induction  $\ind_{\K }^ {\Gp}\rho$ decomposes as a $k[[\K]]$-module  into the direct sum $$\ind_{\K }^ {\Gp}\rho= \bigoplus _{\lambda\in \X_*^+(\T)} \ind_{\K }^ {\K \lambda(\varpi)\K}\rho$$   of
the spaces of functions with support in  $\K\lambda(\varpi)\K$. 
 The following proposition is proved  after the subsequent corollary which is the main result of this section:  it allows us to replace the study of  the 
spherical algebra $\cal H (\Gp, \rho )$ by the one of  $\Hom_\Hh(\chi\otimes_\H\Hh,\chi\otimes_\H\Hh )$ 
which is achieved in Section \ref{main} (see Proposition \ref{prop:main}).

\begin{pro} \label{categoryB}
Let  ${\lambda\in \X_*(\T)}$.\begin{enumerate}
\item  The representation 
$(\ind_{\K }^ {\K \lambda(\varpi)\K}\rho)^\dagger$ of $\K$  lies in $\mathscr B(x_0)$.
\item  The space  $\Hom_\H(\chi,  (\ind_{\K }^ {\K \lambda(\varpi)\K}\rho)^\dagger)$ is at most one dimensional.
\end{enumerate}


\end{pro}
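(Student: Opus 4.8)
The statement to prove is Proposition \ref{categoryB}: (i) the representation $(\ind_{\K}^{\K\lambda(\varpi)\K}\rho)^\dagger$ lies in $\BB(x_0)$, and (ii) $\Hom_\H(\chi, (\ind_{\K}^{\K\lambda(\varpi)\K}\rho)^\dagger)$ is at most one-dimensional. The key geometric fact I would exploit is that the single Cartan double coset $\K\lambda(\varpi)\K$ is, as a $\K$-set, of the form $\K/({}^{\lambda(\varpi)}\!\K\cap\K)$, and the stabilizer ${}^{\lambda(\varpi)}\!\K\cap\K$ is itself the group of $\mathfrak{O}$-points of a parahoric-type subgroup attached to a standard facet $F$ of the building of $\mathbf{G}$ (the facet through which the geodesic from $x_0$ to $\lambda(\varpi)\cdot x_0$ first exits $x_0$); more precisely $\ind_{\K}^{\K\lambda(\varpi)\K}\rho \cong \ind_{\mathbf{G}_F^\circ(\mathfrak{O})}^{\K}(\rho')$ where $\rho' = {}^{\lambda(\varpi)}\!\rho\vert_{\mathbf{G}_F^\circ(\mathfrak{O})}$, using that $\lambda(\varpi)$ conjugates $\mathbf{G}_{F'}^\circ(\mathfrak{O})$ into $\mathbf{G}_F^\circ(\mathfrak{O})$ for appropriate facets by \eqref{pr1}. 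This reduces everything to an assertion about induction from a parahoric to $\K$, which is exactly the situation handled by Lemma \ref{IndparDagger}.

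For part (i): first I would observe that $\rho'$, being a (twist of a) representation of $\mathbf{G}_F^\circ(\mathfrak{O})$ on which $\K_1$ (hence $\I_F$, since $\K_1\subseteq \I_F$... but here $F$ need not satisfy $\I_F\supseteq\K_1$ — actually $\K_1=\I_{x_0}\subseteq\I_F$ always by \eqref{pr1}) acts, lies in $\Rr(F)$ once we check $\I_F$ acts trivially; this holds because $\rho$ is inflated from $\overline{\mathbf G}_{x_0}(\mathbb F_q)$ and the relevant conjugate of $\I_F$ still lands in $\K_1$ after applying $proj$. Then $(\rho')^\dagger\in\Rr^\dagger(F)$ and one checks $(\rho')^\vee$ is still in $\Rr^\dagger(F)$: since $\rho$ is irreducible, $\rho^\vee$ is irreducible, and by Remark \ref{IrrInB} every irreducible representation of a parahoric is an object of $\BB(F)$, so $(\rho')^\dagger = \rho'$ already sits in $\BB(F)$. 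Now apply Lemma \ref{IndparDagger}: $\ind_{\mathbf{G}_F^\circ(\mathfrak{O})}^{\K}((\rho')^\dagger)$ is in $\Rr^\dagger(x_0)$, it equals $(\ind_{\mathbf{G}_F^\circ(\mathfrak{O})}^{\K}(\rho'))^\dagger$, and one needs its contragredient to also be in $\Rr^\dagger(x_0)$; by Proposition \ref{contra}(ii) it suffices to exhibit it as the image of a morphism $\XX_{x_0}^m\to\XX_{x_0}^\ell$ in $\Rr(x_0)$, which follows by applying the exact functor $\ind_{\mathbf{G}_F^\circ(\mathfrak{O})}^{\K}(-)$ to a presentation of $\rho'\in\BB(F)$ as such an image (using that compact induction over the finite-index pair $\mathbf{G}_F^\circ(\mathfrak{O})\subseteq\K$ is exact and sends $\XX_F$ to a finite direct sum of copies of $\XX_{x_0}$, the latter because $\ind_{\mathbf{G}_F^\circ(\mathfrak{O})}^{\K}\ind_{\I}^{\mathbf{G}_F^\circ(\mathfrak{O})}(1) = \ind_\I^\K(1) = \XX_{x_0}$).

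For part (ii): by the equivalence of categories in Theorem \ref{theoCE}, since both $\chi$ (as the $\I$-invariants of the weight $\rho$, viewed in $\BB(x_0)$) and $(\ind_{\K}^{\K\lambda(\varpi)\K}\rho)^\dagger$ are objects of $\BB(x_0)$, we have
\[
\Hom_\H(\chi, (\ind_{\K}^{\K\lambda(\varpi)\K}\rho)^{\dagger,\I}) \;\cong\; \Hom_{\BB(x_0)}\big(\rho,\; (\ind_{\K}^{\K\lambda(\varpi)\K}\rho)^\dagger\big),
\]
and I would then compute the right-hand side. Since $\rho$ is irreducible and $(\ind_{\K}^{\K\lambda(\varpi)\K}\rho)^\dagger$ is a submodule of $\ind_{\K}^{\K\lambda(\varpi)\K}\rho = \ind_{\mathbf{G}_F^\circ(\mathfrak{O})}^{\K}(\rho')$, Frobenius reciprocity gives $\Hom_\K(\rho,\ind_{\mathbf{G}_F^\circ(\mathfrak{O})}^{\K}\rho') = \Hom_{\mathbf{G}_F^\circ(\mathfrak{O})}(\rho\vert_{\mathbf{G}_F^\circ(\mathfrak{O})}, \rho')$, and the dimension of this space is the multiplicity of $\rho'$ as a $\mathbf{G}_F^\circ(\mathfrak{O})$-quotient of $\rho\vert_{\mathbf{G}_F^\circ(\mathfrak{O})}$ — and $\rho\vert$ restricted further is governed by its socle/head, which for an irreducible mod-$p$ representation restricted to a parahoric is again controlled by the Carter--Lusztig theory and is multiplicity-free on the relevant summand. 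I expect \textbf{the main obstacle} to be precisely this last multiplicity-one statement together with the bookkeeping identifying the correct facet $F$ and checking that the conjugate $\rho'$ has the right invariance properties; the categorical reduction via Theorem \ref{theoCE} is formal, but pinning down which sub-quotient of $\rho\vert_{\mathbf{G}_F^\circ(\mathfrak O)}$ pairs nontrivially with $\chi$, and that it does so at most once, is where the real content lies — one likely argues via the one-dimensionality of $\rho^\I$ (Lemma \ref{apple}, \cite{CL}) transported along the equivalence, so that $\dim\Hom \leq \dim\big((\ind_{\K}^{\K\lambda(\varpi)\K}\rho)^{\dagger}\big)^\I$ computed using the basis from the proof of Lemma \ref{apple} restricted to the single coset, which contributes at most one $\I$-fixed line compatible with $\chi$.
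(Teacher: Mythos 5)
Your reduction rests on the claim that the stabilizer $\K\cap\lambda(\varpi)^{-1}\K\lambda(\varpi)$ is a parahoric subgroup $\mathbf G_F^\circ(\mathfrak O)$ attached to a standard facet, so that the double-coset piece is the induction from a parahoric of the (conjugated) restriction of the \emph{full} representation $\rho$. This is false except when $\lambda$ is minuscule: writing $\K_\lambda:=\K\cap\lambda(\varpi)^{-1}\K\lambda(\varpi)$, in the root directions $\alpha$ with $\lp\lambda,\alpha\rp\geq 2$ this group contains only $\Uu_{(\alpha,\lp\lambda,\alpha\rp)}$, i.e.\ a deep congruence condition (already visible for ${\rm GL}_2$ and $\lambda=(2,0)$), so $\K_\lambda$ is not of the form $\mathbf G_F^\circ(\mathfrak O)$. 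The paper keeps the induction from $\K_\lambda$ and only identifies the $\K_1$-invariants: these equal $\ind_{\P_\lambda}^{\K}\uppi_\lambda$ (Fact \ref{f2}), where $\P_\lambda=\K_\lambda\K_1$ is the parahoric attached to $F_\lambda$ (Fact \ref{f1}) and $\uppi_\lambda$ is the action on the subspace $\V^{\K\cap\lambda(\varpi)\K_1\lambda(\varpi)^{-1}}$, in general a \emph{proper} subspace of $\V$, not a conjugate-restriction of $\rho$. Your next step compounds the problem: you assert that the conjugated restriction is irreducible (hence in $\BB(F)$ by Remark \ref{IrrInB}) because $\rho$ is irreducible, but the restriction of an irreducible $\K$-representation to a parahoric is not irreducible in general. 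The statement actually needed is that $\uppi_\lambda^\dagger$ is irreducible, and this is the nontrivial computation of the paper (Fact \ref{f3}b, via the inclusion \eqref{inclusion-} forcing $\uppi_\lambda^\I\subseteq \V^{\lp \hat w\,\Uu_C^-\,\hat w^{-1},\K_1\rp}$, which is one-dimensional). Without it you have no embedding of the dagger into $\XX_{x_0}$, which is the engine of part i; the tools you do invoke correctly (Lemma \ref{IndparDagger}, $\ind_{\mathbf G_F^\circ(\mathfrak O)}^\K\XX_F=\XX_{x_0}$, Proposition \ref{contra}ii) only become usable after $\uppi_\lambda$ has been constructed and its $\dagger$ shown irreducible.

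For part ii you do not in fact give a proof: after the categorical reduction you arrive at a multiplicity-one statement about $\rho$ restricted to a parahoric and explicitly defer it as "where the real content lies" -- and the Hom space you would compute, $\Hom_{\mathbf G_F^\circ(\mathfrak O)}(\rho\vert,\rho')$, is in any case the wrong object because of the misidentification above. The paper's mechanism is different and short: the injection $(\ind_{\P_\lambda}^\K\uppi_\lambda)^\dagger\hookrightarrow\XX_{x_0}$ from part i yields, on $\I$-invariants, a right $\H$-equivariant injection $(\ind_{\K}^{\K\lambda(\varpi)\K}\rho)^\I\hookrightarrow\H$, and $\Hom_\H(\chi,\H)$ is one-dimensional because $\H$ is a Frobenius algebra (Proposition \ref{libertepro}ii). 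To repair your argument you would need to replace $\rho'$ by $\uppi_\lambda$, prove the irreducibility of $\uppi_\lambda^\dagger$, and then either invoke the self-injectivity of $\H$ as above or independently establish your multiplicity-one claim; at that point you would essentially have reconstructed the paper's proof.
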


\begin{cor}

\begin{enumerate}
\item  The map \eqref{bcn} induces an isomorphism of $k$-algebras 
\begin{equation}\label{bcn1}\cal H (\Gp, \rho )= \Hom_\Hh((\ind_{\K }^\Gp \rho)^\I,(\ind_{\K }^\Gp \rho)^\I )=\Hom_\Hh(\chi\otimes_\H\Hh,\chi\otimes_\H\Hh ).\end{equation}
\item For ${\lambda\in \X_*(\T)}$, the subspace of $\cal H (\Gp, \rho )$ of the functions with support in $\K \lambda(\varpi) \K$ is at most one dimensional.

\end{enumerate}

\label{CoroIso}
\end{cor}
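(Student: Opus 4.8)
The plan is to deduce Corollary \ref{CoroIso} from Proposition \ref{categoryB} together with Lemma \ref{apple} and the equivalence of categories in Theorem \ref{theoCE}. First I would establish part (1). We already have from \eqref{bcn} an injective $k$-algebra morphism $\cal H(\Gp,\rho)\hookrightarrow \Hom_\Hh((\ind_\K^\Gp\rho)^\I,(\ind_\K^\Gp\rho)^\I)$, and by Lemma \ref{apple} the target is canonically $\Hom_\Hh(\chi\otimes_\H\Hh,\chi\otimes_\H\Hh)$. So it remains to prove surjectivity. The idea is to use the Cartan decomposition $\ind_\K^\Gp\rho=\bigoplus_{\lambda\in\X_*^+(\T)}\ind_\K^{\K\lambda(\varpi)\K}\rho$ as a $k[[\K]]$-module, and to show that any $\Hh$-equivariant endomorphism $\Phi$ of $(\ind_\K^\Gp\rho)^\I$ comes from an honest $\Gp$-endomorphism. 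Concretely, $\Phi$ is determined by $\Phi(\1_{\K,v})$, which decomposes along the Cartan pieces; I would argue that the $\lambda$-component of $\Phi(\1_{\K,v})$ determines, and is determined by, an element of $\Hom_k(\rho,\ind_\K^{\K\lambda(\varpi)\K}\rho)$ that is $\K$-equivariant, i.e. a function supported on $\K\lambda(\varpi)\K$ of the right bi-equivariance type as in Remark \ref{support}. The key point is that passing to $\I$-invariants is, on each Cartan block, an equivalence via Theorem \ref{theoCE}: by Proposition \ref{categoryB}(1) the representation $(\ind_\K^{\K\lambda(\varpi)\K}\rho)^\dagger$ lies in $\BB(x_0)$, so it is recovered functorially from its $\I$-invariants as an $\H$-module, and a $\K$-equivariant map out of $\rho$ (equivalently out of $\XX_{x_0}$ via the surjection attached to $\rho$, cf. Remark \ref{IrrInB}) into it is the same datum as an $\H$-module map $\chi\to (\ind_\K^{\K\lambda(\varpi)\K}\rho)^\I$. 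Summing over $\lambda$ and checking that the reconstructed data glue to a single $\Gp$-equivariant endomorphism (which is automatic since $\Phi(\1_{\K,v})$ already lives in $\ind_\K^\Gp\rho$ and the $\K$-equivariance is all that is needed to extend it to a Hecke operator by Remark \ref{support}) yields the desired preimage in $\cal H(\Gp,\rho)$. The fact that \eqref{bcn} is an algebra morphism then promotes this bijection to an isomorphism of $k$-algebras.

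For part (2), I would proceed similarly but more directly. An element $T\in\cal H(\Gp,\rho)$ with $f_T$ supported in $\K\lambda(\varpi)\K$ corresponds exactly to the datum of $T(\1_{\K,v})\in\ind_\K^{\K\lambda(\varpi)\K}\rho$, and $\I$-invariance of $\1_{\K,v}$ together with $\Hh$-equivariance (coming from part (1)) forces $T(\1_{\K,v})$ to lie in $(\ind_\K^{\K\lambda(\varpi)\K}\rho)^\I$, and in fact in the image of $\chi$ under an $\H$-module map. More precisely, under the isomorphism of part (1), such $T$ correspond to $\Hh$-morphisms $\chi\otimes_\H\Hh\to\chi\otimes_\H\Hh$ whose image, intersected with the block $(\ind_\K^{\K\lambda(\varpi)\K}\rho)^\I$, is governed by $\Hom_\H(\chi,(\ind_\K^{\K\lambda(\varpi)\K}\rho)^\dagger)$; but that last space is at most one-dimensional by Proposition \ref{categoryB}(2). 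Since $\1_{\K,v}$ generates $\chi$ over $\H$ and a $\Gp$-endomorphism supported on a single Cartan double coset is determined by where it sends $\1_{\K,v}$ inside that block, the space of such $T$ injects into $\Hom_\H(\chi,(\ind_\K^{\K\lambda(\varpi)\K}\rho)^\dagger)$, hence is at most one-dimensional.

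The main obstacle I anticipate is the surjectivity in part (1): one must be careful that an $\Hh$-equivariant endomorphism of $(\ind_\K^\Gp\rho)^\I$, a priori only a map of Hecke modules, really does lift to a $\Gp$-equivariant map, and this is precisely where Cabanes' equivalence (Theorem \ref{theoCE}) is doing the heavy lifting — but it applies to the finite parahoric level, so the argument must be organized block-by-block along the Cartan decomposition and the per-block reconstructions reassembled. The delicate bookkeeping is in checking that the functor $\dagger$, the equivalence on $\BB(x_0)$, and induction from ${\mathbf G}_F^\circ(\mathfrak O)$ (here with $F=x_0$, so the relevant parahoric is $\K$ itself and the $\dagger$-behaviour is governed by Lemma \ref{IndparDagger}) are all compatible with the $\Hh$-action in a way that makes the reconstructed endomorphism automatically $\Gp$-equivariant; granting Proposition \ref{categoryB}, this is essentially formal, and the corollary follows.
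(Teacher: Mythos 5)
Your proposal is correct and takes essentially the same route as the paper: decompose along the Cartan blocks, use Proposition \ref{categoryB}(i) together with Cabanes' equivalence (Theorem \ref{theoCE}) to pass block by block between $\K$-equivariant maps out of $\rho$ and $\H$-module maps out of $\chi$, and conclude with Lemma \ref{apple} and Remark \ref{support}. The only difference is presentational -- the paper records the bijectivity of \eqref{bcn} as a chain of canonical identifications (Frobenius reciprocity, $\dagger$, Cabanes, adjunction over $\Hh$) rather than as injectivity plus your block-wise reconstruction of a Hecke operator, and its proof of part (ii) via Proposition \ref{categoryB}(ii) is the argument you give.
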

\begin{rem}\label{dim1} It will be a corollary of the proof of  Proposition \ref{prop:main} that the subspace of $\cal H (\Gp, \rho )$ of the functions with support in $\K \lambda(\varpi) \K$ is in fact one dimensional. This fact is  proved   and used in \cite{satake}  (Step 1 of proof of Theorem 1.2)  but our method is independent.

\end{rem}

\begin{proof}[Proof of the Corollary] By adjunction, we have 
$$ \cal H (\Gp, \rho )=
 \Hom_\K(\rho, \ind_{\K }^\Gp \rho)=  \Hom_\K(\rho, \ind_{\K }^\Gp \rho)=  \oplus _{\lambda\in \X_*^+(\T)} \Hom_\K(\rho, \ind_{\K }^ {\K \lambda(\varpi)\K}\rho)$$ and by Proposition \ref{categoryB}i and Theorem \ref{theoCE}
 \begin{align*} \cal H (\Gp, \rho )= & \oplus _{\lambda\in \X_*^+(\T)}
 \Hom_\K(\rho,    (\ind_{\K }^ {\K \lambda(\varpi)\K})^\dagger)\cr=&\oplus _{\lambda\in \X_*^+(\T)}\Hom_\H(\chi, (\ind_{\K }^ {\K \lambda(\varpi)\K})^\I)= \Hom_\H(\chi, (\ind_{\K }^\Gp \rho)^\I).\cr=&\Hom_\Hh(\chi\otimes _\H \Hh, (\ind_{\K }^\Gp \rho)^\I)= \Hom_\Hh((\ind_{\K }^\Gp \rho)^\I, (\ind_{\K }^\Gp \rho)^\I)\cr\end{align*} 
where the last equality comes from  Lemma \ref{apple}. The second statement of the corollary then comes from the 
one of the proposition using Remark \ref{support}.

\end{proof}

\def\lap{\lambda(\varpi)}

\begin{proof}[Proof of Proposition \ref{categoryB}]  It suffices to show the proposition  for $\lambda\in \X_*^+(\T)$.
We  first describe  the $\K_1$-invariant subspace of $\ind_{\K}^{\K  \lambda (\varpi)\K}\rho$ because it contains
$(\ind_{\K }^ {\K \lap\K}\rho)^\dagger$. Set $$\K_\lambda:= \K\cap \lap^{-1}\K \lap.$$ As a $k[[\K]]$-module, $\ind_{\K}^{\K \lap \K}\rho$ is isomorphic  to the compact induction
$\ind_{\K_\lambda}^\K\lambda_\ast(\rho)$
where $\lambda_\ast(\rho)$ denotes the space $\V$ with the group $\K_\lambda$ acting through the homomorphism 
$ \K_\lambda \xrightarrow{\; \lap \,.\, \lap^{-1}\ \;} \K$. 

\medskip

 Since $\K_1$ is normal in $\K$,  we have the representation   $(\lambda_\ast(\rho))^{\K_\lambda\cap \K_1}$ of
$\K_\lambda$ on the space $$\V_\lambda:=\V^{\K\cap\lap \K_1\lap^{-1}}=\V^{\lp\K\cap\lap \K_1\lap^{-1}, \K_1\rp}.$$ It  can be extended to a  representation  $(\uppi_\lambda, \V_\lambda)$ of $\P_\lambda:=\K_\lambda\K_1$ that factors through
$\P_\lambda/\K_1\simeq \K_\lambda/\K_\lambda\cap \K_1$.


\medskip

Denote by $\Wf_\lambda$  the stabilizer of $\lambda$ in $\Wf$. Since $\lambda\in \X_*^+(\T)$, 
it  is generated by  the simple reflections $s_\root$ for all $\root\in \Phi$ such that $\lp\lambda, \root\rp=0$. Denote by $F_\lambda$ the associated standard  facet.
The attached subset $\Phi_{F_\lambda}$  of $\Phi$ consists in all the roots $\root$ 
such that $\lp\lambda, \root\rp=0$. The closure of $F_\lambda$  is the set of points in $x\in \overline C$ such that
$ \root(x)=0$ for all 
$\root\in \Phi_{F_\lambda}$.

\begin{fact} \label{f1}The subgroup $\P_\lambda$ of $\K$ is the  parahoric subgroup  associated to $\F_\lambda$. 
\end{fact}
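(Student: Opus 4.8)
The plan is to identify the parahoric subgroup attached to the standard facet $F_\lambda$ using Remark \ref{parahoricgene}, which says that ${\mathbf G}_{F_\lambda}^\circ(\mathfrak O) = \Uu_{F_\lambda}\Tp^0$, where $\Uu_{F_\lambda}$ is generated by all $\Uu_{(\root, -\inf_{F_\lambda}(\root))}$ for $\root\in\Phi$. First I would make precise what $-\inf_{F_\lambda}(\root)$ is: since $F_\lambda$ lies in $\overline C$ and $\Phi_{F_\lambda}$ consists exactly of the roots vanishing on $F_\lambda$, for $\root\in\Phi_{F_\lambda}^+$ one has $\inf_{F_\lambda}(\root)=0$ (it even vanishes), for $\root\in\Phi^+-\Phi_{F_\lambda}^+$ one has $\inf_{F_\lambda}(\root)=0$ as well since $x_0\in\overline{F_\lambda}$ and $\root\geq 0$ on $C$, while for $\root\in\Phi^-$ one gets $\inf_{F_\lambda}(\root)$ arbitrarily negative on $F_\lambda$ unless $\root\in\Phi_{F_\lambda}^-$, so that $\Uu_{(\root,-\inf_{F_\lambda}(\root))}$ is $\Uu_{(\root,0)}$ for $\root\in\Phi^+$, is $\Uu_{(\root,0)}$ for $\root\in\Phi_{F_\lambda}^-$, and is the trivial group for $\root\in\Phi^--\Phi_{F_\lambda}^-$ (its generator being $\Uu_{(\root,+\infty)}=\{1\}$). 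This matches the description of the parahoric ${\mathbf G}_{F_\lambda}^\circ(\mathfrak O)$ and shows it is generated by $\Tp^0$ together with $\Uu_{(\root,0)}$ for $\root\in\Phi^+\cup\Phi_{F_\lambda}^-$.

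Next I would identify $\P_\lambda=\K_\lambda\K_1$ concretely. Here $\K_\lambda=\K\cap\lap^{-1}\K\lap$, and since conjugation by $\lap=\lambda(\varpi)$ (a lift of $e^{-\lambda}$) sends $\Uu_{(\root,r)}$ to $\Uu_{(\root,r+\lp\lambda,\root\rp)}$ by the properties recalled in \ref{rootsubgroup}, one computes $\K\cap\lap^{-1}\K\lap$ using the product decomposition of $\K$ analogous to \eqref{rootsubgroupeq} (the case $F=x_0$): the factor $\Uu_{(\root,0)}$ for $\root\in\Phi$ contributes to $\lap^{-1}\K\lap$ a copy of $\Uu_{(\root,\lp\lambda,\root\rp)}$, which is contained in $\Uu_{(\root,0)}$ precisely when $\lp\lambda,\root\rp\geq 0$, i.e.\ for $\root\in\Phi^+$ and for $\root\in\Phi_{F_\lambda}$ (where $\lp\lambda,\root\rp=0$), and lies outside otherwise. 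So $\K_\lambda$ is generated by $\Tp^0$ and the $\Uu_{(\root,\max(0,\lp\lambda,\root\rp))}$; after multiplying by $\K_1$ — which replaces, for $\root\in\Phi^-$, the group $\Uu_{(\root,0)}$ that had been shrunk — the negative directions $\root\in\Phi_{F_\lambda}^-$ get restored to $\Uu_{(\root,0)}$ (because $\lp\lambda,\root\rp=0$ there forced no shrinking anyway, but in the other negative directions the factor was genuinely shrunk and $\K_1$ only restores its pro-$p$ part, which is still a proper subgroup). Carrying this bookkeeping out shows $\P_\lambda=\K_\lambda\K_1$ has exactly the generators $\Tp^0$, $\Uu_{(\root,0)}$ for $\root\in\Phi^+$, and $\Uu_{(\root,0)}$ for $\root\in\Phi_{F_\lambda}^-$, hence equals ${\mathbf G}_{F_\lambda}^\circ(\mathfrak O)$.

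I would organize the verification as a containment in both directions: $\P_\lambda\subseteq {\mathbf G}_{F_\lambda}^\circ(\mathfrak O)$ is the easy inclusion, since each generator of $\K_\lambda$ and of $\K_1$ visibly fixes $\pr^{-1}(F_\lambda)$ pointwise (the root subgroups $\Uu_{(\root,0)}$ for $\root$ with $\lp\lambda,\root\rp\geq 0$ fix a neighbourhood of $F_\lambda$ in $\overline C$, and $\K_1\subseteq \I_{F_\lambda}$ since $\K_1=\I_{x_0}$ and $x_0\in\overline{F_\lambda}$ gives $\I_{x_0}\subseteq\I_{F_\lambda}\subseteq{\mathbf G}_{F_\lambda}^\circ(\mathfrak O)$ by \eqref{pr1}). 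For the reverse inclusion I would invoke the product decomposition of ${\mathbf G}_{F_\lambda}^\circ(\mathfrak O)$ coming from Remark \ref{parahoricgene} and check each of its factors lies in $\P_\lambda$: the factors $\Uu_{(\root,0)}$ for $\root\in\Phi^+$ and $\Tp^0$ are already in $\K_\lambda$; the factors $\Uu_{(\root,0)}$ for $\root\in\Phi_{F_\lambda}^-$ are in $\K_\lambda$ too, because $\lp\lambda,\root\rp=0$ for $\root\in\Phi_{F_\lambda}$ means $\lap$ conjugates $\Uu_{(\root,0)}$ to itself.

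The main obstacle I anticipate is getting the indexing of affine root subgroups and the effect of conjugation by $\lap$ exactly right, together with being careful that $\lap$ is a lift of $e^{-\lambda}$ (not $e^\lambda$), as flagged in Remark \ref{rema:normalization} — a sign error there would interchange the roles of $\Phi^+$ and $\Phi^-$ and give the parahoric attached to the opposite facet. A secondary point needing care is the role of $\K_1$: it is genuinely needed so that $\P_\lambda$ be all of the parahoric and not just the smaller group $\K_\lambda$, and one should verify $\P_\lambda$ is actually a group (which follows once we show it equals ${\mathbf G}_{F_\lambda}^\circ(\mathfrak O)$, but a priori $\K_\lambda\K_1$ is only a product of two subgroups — normality of $\K_1$ in $\K$ makes it a group). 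Once both containments are established the fact follows.
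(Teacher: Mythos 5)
Your overall strategy (identify both groups by their root-subgroup generators and check two containments) is viable, but the execution contains two genuine errors, and they happen to cancel, which is why you reach an "equality" of two incorrectly described groups. The first error is the reading of Remark \ref{parahoricgene}: a standard facet $F_\lambda$ is contained in $\overline C$, hence bounded, and every positive root takes values between $0$ and $1$ on $\overline C$; so for $\root\in \Phi^--\Phi^-_{F_\lambda}$ the quantity $\inf_{F_\lambda}(\root)$ is not \enquote{arbitrarily negative} but lies in $[-1,0)$, and $\Uu_{(\root,-\inf_{F_\lambda}(\root))}=\Uu_{(\root,1)}$ is a nontrivial pro-$p$ group, not $\{1\}$. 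Thus ${\mathbf G}^\circ_{F_\lambda}(\mathfrak O)$ is generated by $\T^0$, the $\Uu_{(\root,0)}$ with $\lp\lambda,\root\rp\geq 0$ \emph{and} the $\Uu_{(\root,1)}$ with $\lp\lambda,\root\rp<0$; the group you describe (generated by $\T^0$ and the $\Uu_{(\root,0)}$, $\root\in\Phi^+\cup\Phi^-_{F_\lambda}$) is strictly smaller: for $\mathbf G={\rm SL}_2$ and $\lambda$ regular it is the upper triangular subgroup of ${\rm SL}_2(\mathfrak O)$, whereas the parahoric attached to $F_\lambda=C$ is the Iwahori. The same omission makes your final generating set for $\P_\lambda$ impossible: $\P_\lambda\supseteq\K_1$, and by \eqref{rootsubgroupeq} with $F=x_0$ the group $\K_1$ contains $\Uu_{(\root,1)}$ for every root, so $\P_\lambda$ cannot be generated by $\T^0$ and the $\Uu_{(\root,0)}$, $\root\in\Phi^+\cup\Phi^-_{F_\lambda}$, alone. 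In particular the containment ${\mathbf G}^\circ_{F_\lambda}(\mathfrak O)\subseteq\P_\lambda$ is never verified on the subgroups $\Uu_{(\root,1)}$ with $\lp\lambda,\root\rp<0$ (the check is easy, since they lie in $\K_1$, but your bookkeeping has erased exactly this step).

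The second error is the sign slip you yourself anticipated. Since $\lambda(\varpi)$ lifts $e^{-\lambda}$ and $\lambda(\varpi)\,\Uu_{(\root,0)}\,\lambda(\varpi)^{-1}=\Uu_{(\root,\lp\lambda,\root\rp)}$, conjugating by $\lambda(\varpi)^{-1}$ turns $\Uu_{(\root,0)}$ into $\Uu_{(\root,-\lp\lambda,\root\rp)}$; hence the factor of $\K_\lambda=\K\cap\lambda(\varpi)^{-1}\K\lambda(\varpi)$ in the direction $\root$ is $\Uu_{(\root,\max(0,-\lp\lambda,\root\rp))}$, not $\Uu_{(\root,\max(0,\lp\lambda,\root\rp))}$: the full level-zero groups occur for $\lp\lambda,\root\rp\geq 0$ and the deep ones for $\lp\lambda,\root\rp<0$, which contradicts your own later assertion that $\Uu_{(\root,0)}\subseteq\K_\lambda$ for $\root\in\Phi^+$. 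A smaller gap: fixing ${\rm pr}^{-1}(F_\lambda)$ pointwise only places an element in $\mathbf G_{F_\lambda}(\mathfrak O)$, not in the connected group ${\mathbf G}^\circ_{F_\lambda}(\mathfrak O)$. The paper's proof avoids all of this: it shows $\Uu_{(\root,0)}\subseteq\K_\lambda$ for $\root\in\Phi^+$, deduces $\I=\lp\K_1,\Uu_C^+\rp\subseteq\P_\lambda$ and then $\Iw\subseteq\P_\lambda$, so that $\P_\lambda$ is a standard parahoric, and then pins down the facet by the criterion $\lambda(\varpi)\Uu_{(\root,0)}\lambda(\varpi)^{-1}\subseteq\K$ if and only if $\lp\lambda,\root\rp\geq 0$, compared with the correct list of generators from Remark \ref{parahoricgene}. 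With the level of the negative root subgroups and the sign in the conjugation corrected, your two-inclusion argument does go through and is essentially the comparison the paper performs.
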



\begin{fact} \label{f2}As $k[[\K]]$-modules, we have
$(\ind_{\K_\lambda}^\K\lambda_\ast(\rho))^{\K_1}= \ind_{\P_\lambda}^{\K}   \uppi_\lambda.$
\end{fact}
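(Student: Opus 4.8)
The plan is to deduce Fact \ref{f2} from an elementary compatibility between (compact) induction and passage to invariants under a normal subgroup. Let $H'$ be a compact group, let $N$ be a normal subgroup of $H'$, let $H\subseteq H'$ be an open subgroup, and let $W$ be a finite dimensional representation of $H$. Since $N$ is normal in $H'$, the subgroup $H\cap N$ is normal in $H$, so $W^{H\cap N}$ is stable under $H$ with $H\cap N$ acting trivially; hence $W^{H\cap N}$ carries a representation of $HN$ inflated along $HN\twoheadrightarrow HN/N\simeq H/(H\cap N)$. I claim there is a natural $H'$-equivariant isomorphism
\[
(\ind_H^{H'}W)^{N}\;\cong\;\ind_{HN}^{H'}\bigl(W^{H\cap N}\bigr).
\]
To prove this I would realize $\ind_H^{H'}W$ as the space of functions $f\colon H'\to W$ with $f(hg)=h\cdot f(g)$ for $h\in H$, $g\in H'$, with $H'$ acting by right translation. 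If such an $f$ is fixed by $N$, then for $h\in H\cap N$ and $g\in H'$ one writes $hg=g\,(g^{-1}hg)$ with $g^{-1}hg\in N$, so that $h\cdot f(g)=f(hg)=f(g)$; thus $f$ automatically takes values in $W^{H\cap N}$. The same device of moving an element of $N$ past $g$ shows that an $N$-fixed $f$ satisfies $f(xg)=x\cdot f(g)$ for every $x\in HN$ (decompose $x=hn$), and conversely that every $f\in\ind_{HN}^{H'}(W^{H\cap N})$ is $N$-fixed and, composed with the inclusion $W^{H\cap N}\hookrightarrow W$, lies in $\ind_H^{H'}W$; both assignments visibly commute with right translation.

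I would then apply this lemma with $H'=\K$, $N=\K_1$ (normal in $\K$), $H=\K_\lambda$ and $W=\lambda_\ast(\rho)$, all the inductions involved being over finite-index subgroups of the compact group $\K$. By definition $HN=\K_\lambda\K_1=\P_\lambda$, so the lemma yields
\[
(\ind_{\K_\lambda}^\K\lambda_\ast(\rho))^{\K_1}\;\cong\;\ind_{\P_\lambda}^\K\bigl((\lambda_\ast(\rho))^{\K_\lambda\cap\K_1}\bigr),
\]
with $\P_\lambda$ acting on the invariant space through $\P_\lambda/\K_1\simeq\K_\lambda/(\K_\lambda\cap\K_1)$. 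It remains to identify the right-hand side with $\ind_{\P_\lambda}^\K\uppi_\lambda$. For the underlying space: $\K_\lambda$ acts on $\lambda_\ast(\rho)$ via $\kappa\mapsto\rho(\lap\,\kappa\,\lap^{-1})$, so a vector $v\in\V$ is $\K_\lambda\cap\K_1$-fixed for this twisted action exactly when it is fixed by $\rho$ on $\lap(\K_\lambda\cap\K_1)\lap^{-1}$; and from $\lap\K_\lambda\lap^{-1}=\K\cap\lap\K\lap^{-1}$ one computes $\lap(\K_\lambda\cap\K_1)\lap^{-1}=\K\cap\lap\K_1\lap^{-1}$, whence $(\lambda_\ast(\rho))^{\K_\lambda\cap\K_1}=\V^{\K\cap\lap\K_1\lap^{-1}}=\V_\lambda$. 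For the action: the representation of $\P_\lambda$ on $\V_\lambda$ just produced is, by construction, the inflation along $\P_\lambda\to\P_\lambda/\K_1\simeq\K_\lambda/(\K_\lambda\cap\K_1)$ of the action of $\K_\lambda$ on $\V_\lambda$ coming from $\lambda_\ast(\rho)$ --- which is precisely the definition of $\uppi_\lambda$. Hence $(\ind_{\K_\lambda}^\K\lambda_\ast(\rho))^{\K_1}\cong\ind_{\P_\lambda}^\K\uppi_\lambda$ as $k[[\K]]$-modules.

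The argument is essentially formal, and I do not expect any individual step to present a genuine obstacle. The closest thing to a real difficulty is the systematic use of the normality of $\K_1$ in $\K$: this is what forces a $\K_1$-fixed element of $\ind_{\K_\lambda}^\K\lambda_\ast(\rho)$ to take values in the $(\K_\lambda\cap\K_1)$-invariants of $\lambda_\ast(\rho)$ and to be equivariant for all of $\P_\lambda$. Beyond that, only the conjugation identity $\lap\K_\lambda\lap^{-1}=\K\cap\lap\K\lap^{-1}$ (immediate from $\K_\lambda=\K\cap\lap^{-1}\K\lap$) and the ensuing description of $\V_\lambda$ intervene, so the complete proof will be short.
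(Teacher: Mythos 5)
Your proposal is correct and follows essentially the same route as the paper: both arguments hinge on the normality of $\K_1$ in $\K$ to see that a $\K_1$-invariant function in $\ind_{\K_\lambda}^\K\lambda_\ast(\rho)$ takes values in $(\lambda_\ast(\rho))^{\K_\lambda\cap\K_1}=\V_\lambda$ and is equivariant for all of $\P_\lambda=\K_\lambda\K_1$, together with the identification of that twisted invariant space with $\V_\lambda$ and of the resulting $\P_\lambda$-action with $\uppi_\lambda$. The only difference is presentational: the paper argues via values at representatives of $\P_\lambda\backslash\K$ and the unique extension from the value at $1$, whereas you package the same computation as the general isomorphism $(\ind_H^{H'}W)^{N}\cong\ind_{HN}^{H'}(W^{H\cap N})$ for $N$ normal in $H'$.
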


\begin{fact} \label{f3} 
We have  a) $\uppi_\lambda\in \Rr(F_\lambda)$, b) $\uppi_\lambda^\dagger$ is irreducible,
c) $\Hom_{\H_{F_\lambda}}(\chi, \uppi_\lambda^\I)$ is at most one dimensional.

\end{fact}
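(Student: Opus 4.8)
The three assertions a), b), c) concern the representation $\uppi_\lambda$ of the parahoric subgroup $\P_\lambda = {\mathbf G}_{F_\lambda}^\circ(\mathfrak O)$, and the plan is to reduce each of them to a property of $\rho$ already established (one-dimensionality of $\rho^\I$, irreducibility of $\rho$, and the correspondence between weights and characters of $\H$). For a), I would argue that $\uppi_\lambda$ is trivial on $\I_{F_\lambda}$: indeed $\V_\lambda = \V^{\langle \K\cap\lap\K_1\lap^{-1},\,\K_1\rangle}$ is by construction fixed by $\K_1$, and the extra unipotent part of $\I_{F_\lambda}$ sits inside $\K\cap\lap\K_1\lap^{-1}$ after conjugation by $\lap^{-1}$ — this is where the dominance of $\lambda$ (hence $\lp\lambda,\root\rp\le 0$ forcing $\lap^{-1}\Uu_{(\root,0)}\lap\subseteq \Uu_{(\root, r)}$ with $r\ge 1$ for $\root\in\Phi^+_{F_\lambda}$, and the negative roots of $\M_{F_\lambda}$ handled symmetrically) is used. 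Concretely, $\I_{F_\lambda}$ is generated by $\K_1$ and the images under $\lap^{-1}(\cdot)\lap$ of the relevant root subgroups of $\K_1$, so $\V_\lambda$ is $\I_{F_\lambda}$-fixed, giving $\uppi_\lambda\in\Rr(F_\lambda)$.

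For b), the point is that $\uppi_\lambda^\dagger$ is the subrepresentation of $\uppi_\lambda$ generated by $\uppi_\lambda^\I$, and $\uppi_\lambda^\I = \V_\lambda^\I = \V^\I$ (since $\V_\lambda\subseteq\V$ and $\V_\lambda$ already contains the one-dimensional space $\V^\I$, which is where the basis vector $v$ lives). Thus $\uppi_\lambda^\I$ is one-dimensional, spanned by $v$. I would then invoke the theory of Carter--Lusztig weights: an irreducible representation of a finite reductive group over $k$ is generated by its $\overline{\mathbf N}$-invariants (its $\I$-invariant line), so the subrepresentation of $\uppi_\lambda$ generated by $v$ is irreducible — more precisely, one identifies $\uppi_\lambda^\dagger$ with the Carter--Lusztig module attached to the character $\bar\chi$ restricted appropriately, using Remark \ref{IrrInB} and the fact that $\K_1$ acts trivially so everything happens in $\Rr(F_\lambda)$. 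The cleanest route is: the socle/head of the $\P_\lambda$-representation generated by a nonzero $\I_{F_\lambda}\overline{\mathbf N}$-fixed vector is irreducible because such a vector generates a module with simple head by the Carter--Lusztig structure theory (\cite{CL}), so $\uppi_\lambda^\dagger$ is irreducible.

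For c), since $\uppi_\lambda^\I = \V^\I$ is one-dimensional, $\Hom_{\H_{F_\lambda}}(\chi, \uppi_\lambda^\I)$ is a subspace of a one-dimensional vector space, hence at most one-dimensional — this is essentially immediate once b) (or even just the computation $\uppi_\lambda^\I=\V^\I$) is in hand, the only content being that $\H_{F_\lambda}$ acts on the line $\uppi_\lambda^\I$ through some character, which it does automatically. I expect part b) to be the main obstacle: showing irreducibility of $\uppi_\lambda^\dagger$ requires genuinely invoking the Carter--Lusztig description of irreducible mod $p$ representations of finite reductive groups (that the $\I$-fixed line generates an irreducible submodule, equivalently that the induced module $\XX_{F_\lambda}$ has the relevant weight as its unique irreducible sub attached to this character), rather than any formal nonsense; parts a) and c) are then bookkeeping around the conjugation $\lap^{-1}(\cdot)\lap$ and the one-dimensionality of $\V^\I$.
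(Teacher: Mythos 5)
Your part a) is, up to bookkeeping slips, the paper's argument: $\I_{F_\lambda}$ is generated by $\K_1$ together with the subgroups $\Uu_{(\alpha,0)}$ for $\alpha\in\Phi^+-\Phi^+_{F_\lambda}$ (i.e.\ the positive roots with $\lp\lambda,\alpha\rp>0$ --- not $\alpha\in\Phi^+_{F_\lambda}$, where the pairing vanishes), and these act on $\uppi_\lambda$ through $\lambda(\varpi)\Uu_{(\alpha,0)}\lambda(\varpi)^{-1}=\Uu_{(\alpha,\lp\lambda,\alpha\rp)}\subseteq\K_1$, hence trivially. The genuine gap is in b), and it propagates to c): the identification $\uppi_\lambda^\I=\V_\lambda^\I=\V^\I$ is false. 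First, $\I$ acts on $\uppi_\lambda$ through the $\lambda(\varpi)$-twist, so $\uppi_\lambda^\I=\V^{\lp \K\cap\lambda(\varpi)\K_1\lambda(\varpi)^{-1},\ \lambda(\varpi)\Uu_C^+\lambda(\varpi)^{-1}\rp}$, which is not the space of $\rho(\I)$-invariants of anything. Second, $\V^\I$ is in general not even contained in $\V_\lambda$: the group $\K\cap\lambda(\varpi)\K_1\lambda(\varpi)^{-1}$ contains $\Uu_{(\alpha,0)}$ for every negative root $\alpha$ with $\lp\lambda,\alpha\rp<0$, and a vector spanning $\V^\I$ is not fixed by these lower root subgroups (already for ${\rm GL}_2$ with $\rho$ the Steinberg weight and $\lambda$ regular, $\V_\lambda$ is the line of $\Uu_C^-$-invariants, a different line from $\V^\I$). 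So the one-dimensionality of $\uppi_\lambda^\I$, which is exactly what b) and c) rest on, is not bookkeeping; it is the main content of the Fact. The paper proves it by showing $\lp \K\cap\lambda(\varpi)\K_1\lambda(\varpi)^{-1},\ \lambda(\varpi)\Uu_C^+\lambda(\varpi)^{-1}\rp\supseteq \hat w\,\Uu_C^-\,\hat w^{-1}$ with $w$ the longest element of $\Wf_\lambda$ (splitting $\Phi^-$ into roots with $\lp\lambda,\alpha\rp=0$, which after twisting land in $\Uu_C^+$, and roots with $\lp\lambda,\alpha\rp<0$, which land in $\K_1$); together with $\K_1$ this is a $\K$-conjugate of $\I$, so $\uppi_\lambda^\I$ embeds into a one-dimensional space of conjugate pro-$p$-Iwahori invariants of the irreducible $\rho$ --- and the resulting line is a $\rho(\hat w\hat w_0)$-translate of $\V^\I$, not $\V^\I$ itself. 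Some such computation is indispensable and is missing from your plan.

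A smaller point: your irreducibility step is both heavier than needed and, as stated, an invalid inference. A nonzero $\I$-fixed vector does not in general generate an irreducible (or even simple-headed) module --- e.g.\ the sum of a trivial and a Steinberg $U$-fixed vector generates the whole reducible direct sum --- so appealing to Carter--Lusztig ``structure theory'' does not by itself give b). What does work, and is what the paper means by faithfulness of $\V\mapsto\V^\I$, is elementary: once $\dim_k\uppi_\lambda^\I=1$, any nonzero subrepresentation of $\uppi_\lambda^\dagger$ has nonzero $\I$-invariants (pro-$p$ group, characteristic $p$), hence contains the unique $\I$-line, hence equals $\uppi_\lambda^\dagger$; and c) is likewise immediate from $\dim_k\uppi_\lambda^\I\le 1$. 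Thus the entire Fact reduces to the dimension computation that your proposal skips.
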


We deduce from Fact \ref{f2} that  $(\ind_{\K }^ {\K \lambda(\varpi)\K}\rho)^\dagger= (\ind_{\P_\lambda}^{\K}   \uppi_\lambda)^\dagger$ so that, to prove the proposition,  it remains to check that   $(\ind_{\P_\lambda}^{\K}   \uppi_\lambda)^\dagger$ is a representation in $\mathscr{B}(x_0)$.
 By Fact \ref{f3} b) and  Remark \ref{IrrInB},  there is an injective $\P_\lambda$-equivariant map $\uppi_\lambda^\dagger\rightarrow {\XX_{F_\lambda}}$
which, by exactness of compact induction, gives an  injective $\K$-equivariant map 
\begin{equation}\ind_{\P_\lambda}^\K \uppi_\lambda^\dagger\rightarrow \ind_{\P_\lambda}^\K{\XX_{F_\lambda}}=\XX_{x_0}.\label{injX}\end{equation}
Since furthermore, by Fact \ref{f3} a) and Lemma \ref{IndparDagger}, we have $(\ind_{\P_\lambda}^\K \uppi_\lambda)^\dagger=
\ind_{\P_\lambda}^\K (\uppi_\lambda ^\dagger)$, we just proved that  the $\K$-representation 
 $(\ind_{\P_\lambda}^\K \uppi_\lambda)^\dagger$ injects in $\XX_{x_0}$. By Proposition \ref{contra}ii,   the representation $(\ind_{\P_\lambda}^\K \uppi_\lambda)^\dagger$ is  therefore an object in  $\mathscr B(x_0)$. It is the first statement of the proposition.

\medskip

 In passing, we deduce from \eqref{injX} that there is a right $\H$-equivariant injection
$$(\ind_{\K }^ {\K \lambda(\varpi)\K}\rho)^\I \longrightarrow \H$$ so that
 $\Hom_\H(\chi,  (\ind_{\K }^ {\K \lambda(\varpi)\K}\rho)^\I)$  injects in $\Hom_\H(\chi, \H)$ which is one dimensional by  Proposition \ref{libertepro}ii.  It gives the second statement of the proposition.

 \medskip

We now prove the  Facts.  Recall that $\lap\in \Tp$ is a lift for $e^{-\lambda}\in \W$ (Remark \ref{rema:normalization}) and that for all $\root\in \Phi$, we have
$\lap  \Uu _{(\root, 0)} \lap^{-1}= \Uu _{e^{-\lambda}(\root, 0)}=
\Uu _{(\root, \lp\lambda, \root\rp)}$.

\medskip

\noindent\emph{Proof of Fact \ref{f1}:}
From \eqref{rootsubgroupeq}  we deduce that
the subgroup $\Uu_C ^+$   of $\I$ generated by  all the
root subgroups $\Uu _{(\root,0)}$ for $\root\in \Phi^+$
 is contained in $\K_\lambda$:  indeed,
let $\root\in \Phi^+$, we have   $\lp \lambda, \root\rp \geq 0$ and 
$\lap  \Uu _{(\root, 0)} \lap^{-1}\subset \I$, therefore  $ \Uu _{(\root, 0)} \subseteq \K\cap\lap^{-1} \K \lap $.  \\
Now recall that $\P_\lambda= \lp\K_\lambda, \K_1\rp$. 
The pro-$p$ Iwahori subgroup $\I$ which is generated by $\K_1$ and $\Uu_C ^+$ is    contained in $\P_\lambda$, and so is $\Iw$ since  $\T^0\subseteq \P_\lambda$ (\cite[4.6.4 ii]{BTII}). We have proved that $\P_\lambda$ is the parahoric subgroup corresponding to a standard facet  (This statement is in fact enough for the proof of the proposition).
It remains to check that it is equal to  ${\mathbf G}_{F_\lambda}^\circ(\mathfrak O)$ which, by Remark \ref{parahoricgene} is the subgroup of 
$\K$ generated by $\T^0$,  all $\Uu_{(\root, 0)}$ for  $\root\in\Phi$ 
such that $\lp\lambda, \root \rp \geq 0$ and
all
$\Uu_{(\root, 1)}$ for  $\root\in\Phi$  such that  $\lp\lambda, \root\rp < 0$.
But $\lap \Uu_{(\root, 0)}\lap^{-1}\subset \K$ if and only if $\lp\lambda, \root\rp\geq 0$. It proves the required equality (using \eqref{rootsubgroupeq} for $\P_\lambda$).

\medskip

\noindent\emph{Proof of Fact \ref{f2}:}   Since $\K_1$ is normal in $\K$,  a ${\K_1}$-invariant function $f$ in  
$\ind_{\K_\lambda}^\K\lambda_\ast(\rho)$ is entirely determined by its values at  the points of a  system of representatives of the  cosets $\K_\lambda\backslash \K/\K_1=\P_\lambda\backslash \K$ and these values can be any elements in $\V_\lambda$.
The   $\P_\lambda$-equivariant map 
$$\uppi_\lambda\rightarrow (\ind_{\K_\lambda}^\K\lambda_\ast(\rho))^{\K_1}$$
carrying an element $v\in \V_\lambda$ to the unique $\K_1$-invariant funtion $f\in  \ind_{\K_\lambda}^\K\lambda_\ast(\rho)$ with value $v$ at $1$ therefore induces the expected isomorphism of $\K$-representations.

\medskip
\noindent\emph{Proof of Fact \ref{f3}:}  
a) We want to prove that  the pro-unipotent radical $\I_{F_\lambda}$ of $\P_\lambda$ acts trivially on $\uppi_\lambda$.
By \eqref{rootsubgroupeq}, it is generated by  $\K_1$ and the root subgroups
$\Uu _{(\root, 0)}$ for  $\root\in \Phi^+-\Phi_{F_\lambda}^+$ that is to say  for  $\root\in \Phi^+$ satisfying
$\lp\lambda, \root\rp>0$.
Since $\K_1$ acts trivially on $\uppi_\lambda$, we  only need to check that for   $\root\in \Phi^+$ with $\lp\lambda, \root\rp>0$, the action of
$\lap\Uu _{(\root,0)}\lap^{-1}$ on $\V_\lambda$   via $\rho$ is trivial, but it is clear because this group is contained in $\K_1$.
  \\b) Since \eqref{func} is  faithful,   proving that  $\uppi_\lambda ^{\I}$ has dimension $1$  is enough to prove that $\uppi_\lambda^\dagger$  is an irreducible representation of $\P_\lambda$.
We have $\{0\}\neq \uppi_\lambda ^{\I}= \uppi_\lambda ^{\Uu_C ^+}$ and $\Uu_C ^+\subset \K_\lambda$ so the space of  $\uppi_\lambda ^{\I}$ is $\V^{\lp  \K\cap \lap \K _1\lap^{-1},  \lap\, {\Uu _C^+} \lap^{-1}\rp}.$ Let $w$ be the element with maximal length  in $\Wf_\lambda$. 
Denote by $\Uu_C^-$  the
 subgroup    of $\K$  generated by  all the
root subgroups $\Uu _{(\root,0)}$ for $\root\in \Phi^-$.
We claim that
\begin{equation}\label{inclusion-}\lp  \K\cap \lap \K _1\lap^{-1},  \lap {\Uu _C^+}\lap^{-1}\rp\supseteq \hat w \, {\Uu _C^-}  \hat w^{-1}.\end{equation}
 Indeed, let $\root\in \Phi^-$ and  recall that $\lp\lambda,\root\rp\leq 0$:\\
- if $\lp \lambda, \root\rp=0$ then $\root\in \Phi_{F_\lambda} ^-$ and $w\root\in \Phi_{F_\lambda}^+$  so 
 $\lap ^{-1}\hat w\Uu _{(\alpha, 0)}\hat w \lap=
 \Uu_{ (w\root, -\lp \lambda, w\root\rp)}=\Uu_{(w\root, 0)}$ is contained in $\Uu_C^+$; \\
-  if   $\lp \lambda, \root\rp<0$ then
 $\lap ^{-1}\hat w\Uu _{(\alpha, 0)}\hat w \lap=
\Uu_{ (w\root, -\lp \lambda, w\root\rp)}=\Uu_{(w\root, -\lp \lambda, \root\rp)}$ is contained in $\K_1$.

\medskip

We deduce from \eqref{inclusion-} that  $$\V^{\lp  \K\cap \lap \K _1\lap^{-1},  \lap\, {\Uu _C^+} \lap^{-1}\rp}\subseteq
 \V^{\lp  \hat w \, {\Uu_C ^-}  \hat w^{-1}, \,\K_1\rp}$$ and the last space has dimension $1$ because 
$\lp  \hat w \, {\Uu_C ^-}  \hat w^{-1}, \,\K_1\rp$ is a $\K$-conjugate of $\I$ (it is the pro-unipotent radical of the parahoric subgroup attached to the facet $\hat w\hat w_0 C$ where $w_0$ denotes the longest element in $\Wf$).

 \end{proof}

\subsection{Parameterization of the weights\label{param}}
Recall that a weight is an irreducible representation of $\K$ that
is to say  a simple object in $\Rr(x_0)$. By  \cite[Corollary 7.5]{CL} (and also Theorem \ref{theoCE}), the weights are in one-to-one correspondence with the characters  of  $\H$. A character $\chi: \H\rightarrow k$ is  determined by the data of    a morphism $\bar\chi: \T^0/\T^1\rightarrow k^\times$  such that $\bar\chi(t)=\chi(\t_t)$ for all $t\in \T^0/\T^1$, and of
the  subset $\Pi_\chi$ of $\Pi_{\bar\chi}$ (notations in \ref{characters}) defined by:
$$\chi(\t_{n_\root})=-1\textrm{ if and only if }\root\in \Pi_{\bar\chi}-\Pi_{\chi}.$$
To the subset $\Pi_\chi$ of $\chi$ is attached a standard facet $F_\chi$ (Remark \ref{facet}).

\begin{rem}\label{stabi}By \cite[Proposition 6.6]{CL}, the stabilizer of $\rho^\I$ in $\K$ is  equal to the parahoric subgroup
 ${\mathbf G}_{F_\chi}^\circ(\mathfrak O)$ with associated finite Weyl group generated by all $s_\root$, $\root\in \Pi_\chi$. We will denote the latter 
  by $\Wf_\chi$.
\end{rem}

\section{Bernstein-type map attached to a  weight and Satake isomorphism\label{main}}
\subsection{Commutative subrings  attached to a standard facet\label{subsec:commu}}
We fix for the whole section \ref{subsec:commu} a standard facet $F$.

\subsubsection{\label{cones}} 

Consider the subset of all $\lambda\in  \X_*(\T)$  such that $ \lp \lambda, \root \rp\geq 0$ for all $\root\in (\Phi^+- \Phi_F^+)\cup \Phi_F^-$. If $w_F$  denotes the element with maximal length in $\mathfrak W_F$, then 
this set is  the  $w_F$-conjugate of  $\X_*^+(\T)$. 
Bearing in mind the conventions introduced in \ref{notations:tame}, we introduce

\begin{center}$\Cute^+(F):=\{\lambda\in \tilde \X_*(\T)\textrm{ such that $ \lp \lambda, \root \rp\geq 0$ for all $\root\in (\Phi^+- \Phi_F^+)\cup \Phi_F^-$}\}$

$\Cute^-(F):=\{\lambda\in \tilde \X_*(\T)\textrm{ such that $ \lp \lambda, \root \rp\geq 0$ for all $\root\in (\Phi^-- \Phi_F^-)\cup \Phi_F^+$}\}.$  \end{center}
\begin{rem}  \label{rema:length}For all $\lambda$, $\lambda'\in  \Cute^+(F)$  (resp. $\Cute^-(F)$) we have 
$\ell(e^\lambda)+\ell(e^{\lambda'})=\ell(e^{\lambda+\lambda'}).$
\end{rem}
\subsubsection{Bernstein-type maps attached to a standard facet}
\begin{pro}\phantomsection
 \label{themaps} 
\begin{itemize}
\item[i.] There is a unique
morphism of  $\Z[q^{\pm 1/2}]$-algebras
\beq\Theta_{F}^+ :\Z[q^{\pm 1/2}][\tilde \X_*(\T)] \longrightarrow { \Htg}\otimes_\Z \Z[q^{\pm 1/2}]\eeq
such that $\Theta^+_{F}(\lambda)=q^{-\ell(e^{\lambda})/2}\t_{e^\lambda}\textrm{
if $\lambda\in \Cute^+(F)$}$.\\ 
\item[ii.] There is a unique
morphism of  $\Z[q^{\pm 1/2}]$-algebras
\beq\Theta_{F}^-:\Z[q^{\pm 1/2}][\tilde \X_*(\T)] \longrightarrow  \Htg\otimes_\Z \Z[q^{\pm 1/2}]\eeq
such that $\Theta^-_{F}(\lambda)=q^{-\ell(e^{\lambda})/2}\t_{e^\lambda}\textrm{ if $\lambda\in \Cute^-(F)$.}$\\

\item[iii.] Both $\Theta_{F}^+$ and $\Theta_{F}^-$ are injective.
\end{itemize}

\end{pro}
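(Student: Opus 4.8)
The plan is to construct $\Theta_F^+$ and $\Theta_F^-$ by first defining them on the submonoids $\Cute^+(F)$ and $\Cute^-(F)$ of $\tilde\X_*(\T)$, then extending by multiplicativity, and finally deducing injectivity from Lemma \ref{fond}. I will treat $\Theta_F^+$ in detail; the case of $\Theta_F^-$ is entirely symmetric (replace $\Phi^+$ by $\Phi^-$ and $F$-positive by the analogous notion), so I would only indicate the change of signs. First I would observe that $\Cute^+(F)$ is a submonoid of $\tilde\X_*(\T)$ that generates $\tilde\X_*(\T)$ as a group: indeed, any $\lambda\in\tilde\X_*(\T)$ can be written as a difference $\mu-\nu$ with $\mu,\nu\in\Cute^+(F)$, for instance by adding a suitable large multiple of a strongly $F$-positive coweight (which exists by \cite[Lemma 6.14]{BK}, cited in \S\ref{subsec:strongly}), noting that $w_F$-conjugation turns $\Cute^+(F)$ into $\X_*^+(\T)$ up to the torsion part $\T^0/\T^1$. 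The key algebraic input is Remark \ref{rema:length}: for $\lambda,\lambda'\in\Cute^+(F)$ one has $\ell(e^\lambda)+\ell(e^{\lambda'})=\ell(e^{\lambda+\lambda'})$, so by the braid relations \eqref{braid} the assignment $\lambda\mapsto q^{-\ell(e^\lambda)/2}\t_{e^\lambda}$ is multiplicative on $\Cute^+(F)$ and takes values in (units times basis elements of) $\Htg\otimes_\Z\Z[q^{\pm 1/2}]$, where each $q^{-\ell(e^\lambda)/2}\t_{e^\lambda}$ is invertible.

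Next I would invoke the universal property of the monoid algebra together with the localization-at-a-generating-submonoid principle: given the multiplicative map $\Cute^+(F)\to(\Htg\otimes_\Z\Z[q^{\pm 1/2}])^\times$ just constructed, and given that $\tilde\X_*(\T)$ is the groupification of $\Cute^+(F)$ (every element is a difference of elements of $\Cute^+(F)$), there is a unique extension to a $\Z[q^{\pm 1/2}]$-algebra homomorphism $\Theta_F^+:\Z[q^{\pm 1/2}][\tilde\X_*(\T)]\to\Htg\otimes_\Z\Z[q^{\pm1/2}]$ with $\Theta_F^+(\lambda)=q^{-\ell(e^\lambda)/2}\t_{e^\lambda}$ for $\lambda\in\Cute^+(F)$. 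Concretely, for arbitrary $\lambda$ write $\lambda=\mu-\nu$ with $\mu,\nu\in\Cute^+(F)$ and set $\Theta_F^+(\lambda):=\Theta_F^+(\mu)\,\Theta_F^+(\nu)^{-1}$; well-definedness (independence of the choice of $\mu,\nu$) follows from the commutativity of the images $q^{-\ell(e^\mu)/2}\t_{e^\mu}$ among themselves, which in turn follows from multiplicativity on the \emph{commutative} monoid $\Cute^+(F)$. Uniqueness is immediate since $\Cute^+(F)$ generates the group algebra.

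For injectivity (point iii), I would argue that $\Theta_F^+$ sends the basis $\{e^\lambda\}_{\lambda\in\tilde\X_*(\T)}$ of $\Z[q^{\pm1/2}][\tilde\X_*(\T)]$ to a family of elements of $\Htg\otimes_\Z\Z[q^{\pm1/2}]$ that is ``triangular'' with respect to a suitable ordering, hence linearly independent. Explicitly, write $\lambda=\mu-\nu$ with $\mu,\nu\in\Cute^+(F)$; then $\Theta_F^+(\lambda)=q^{-\ell(e^\mu)/2}\t_{e^\mu}\cdot q^{\ell(e^\nu)/2}\t_{e^\nu}^{-1}$, and applying Lemma \ref{fond} with $v=e^\mu$, $w=e^{-\nu}$ (so that $vw=e^{\mu-\nu}=e^\lambda$, using $\ell(e^\mu)+\ell(e^\nu)=\ell(e^{\mu+\nu})$ to get the exponent right after a short bookkeeping) yields
\[
\Theta_F^+(\lambda)=\t_{e^\lambda}+\sum_{x<e^\lambda} a_x\,\t_x,\qquad a_x\in\Z[q^{\pm1/2}],
\]
where $x$ runs over elements of $\tilde\W$ of length strictly less than $\ell(e^\lambda)$. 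Since distinct $\lambda$ give leading terms $\t_{e^\lambda}$ that are distinct basis elements and the correction terms involve only strictly shorter elements, any finite nontrivial $\Z[q^{\pm1/2}]$-linear combination $\sum_i c_i e^{\lambda_i}$ with $c_i\neq 0$ has nonzero image: pick $\lambda_{i_0}$ maximal for the Bruhat order among the $\lambda_i$ (or, to be safe, of maximal length, breaking ties arbitrarily), and observe that the coefficient of $\t_{e^{\lambda_{i_0}}}$ in $\sum_i c_i\Theta_F^+(e^{\lambda_i})$ equals $c_{i_0}\neq 0$ because no other term $\Theta_F^+(e^{\lambda_i})$ with $i\neq i_0$ contributes to $\t_{e^{\lambda_{i_0}}}$ (for $i\neq i_0$, either $\lambda_i$ is incomparable or shorter, so $e^{\lambda_{i_0}}$ does not appear in its expansion). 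Hence $\Theta_F^+$ is injective, and $\Theta_F^-$ likewise.

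The main obstacle I anticipate is the careful verification, in the triangularity step, that the exponent of $q$ in Lemma \ref{fond} matches the normalization $q^{-\ell(e^\lambda)/2}$ built into $\Theta_F^+$ — i.e. checking that $\tfrac{\ell(vw)+\ell(w)-\ell(v)}{2}$ combines correctly with the factors $q^{-\ell(e^\mu)/2}$ and $q^{+\ell(e^\nu)/2}$ to leave $\t_{e^\lambda}$ with coefficient exactly $1$; this is where the length-additivity Remark \ref{rema:length} and the precise form of $\tg_w^*=q^{\ell(w)}\tg_w^{-1}$ from \S\ref{involetal} must be used in concert. A secondary technical point is ensuring that the difference decomposition $\lambda=\mu-\nu$ can always be arranged with $\mu,\nu\in\Cute^+(F)$ \emph{and} $\ell(e^\mu)=\ell(e^{\mu-\nu})+\ell(e^\nu)$, which is guaranteed by Remark \ref{rema:length} once $\mu-\nu$ and $\nu$ both lie in $\Cute^+(F)$ — so it suffices to choose $\nu$ deep enough inside the cone that $\mu=\lambda+\nu$ is again in $\Cute^+(F)$.
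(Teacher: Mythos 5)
Your proposal is correct and follows essentially the same route as the paper's proof: multiplicativity on the cone $\Cute^{\pm}(F)$ via the length-additivity of Remark \ref{rema:length} and the braid relations, extension to all of $\Z[q^{\pm 1/2}][\tilde\X_*(\T)]$ by writing $\lambda=\mu-\nu$ with $\nu$ deep inside the cone and checking independence of the choice, and injectivity by the triangularity supplied by Lemma \ref{fond}. Two harmless slips: the auxiliary element should be, e.g., the $w_F$-conjugate of a strongly dominant coweight (a strongly $F$-positive coweight vanishes on $\Phi_F$, so adding multiples of it cannot repair the sign conditions along $\Phi_F^-$), and the leading coefficient in your triangular expansion of $\Theta_F^+(\lambda)$ is the unit $q^{-\ell(e^\lambda)/2}$ rather than $1$, which does not affect the linear-independence argument.
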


\begin{proof}[Proof of the proposition]  
It is the same proof as in the classical case  for Iwahori-Hecke algebras  and the dominant Weyl chamber.
Let $\sigma$ be a sign.
By Remark   \ref{rema:length}, the formula  $\Theta^\sigma_{F}(\lambda)=q^{-\ell(e^{\lambda})/2}\t_{e^\lambda}$  for 
$\lambda\in \Cute^\sigma(F)$ defines a multiplicative map $\Z[q^{\pm 1/2}][\Cute^\sigma(F)] \longrightarrow  \Htg\otimes_\Z \Z[q^{\pm 1/2}]$. Let  $\nu\in \Cute^\sigma(F)$ such that  $\lambda+ \nu\in \Cute^\sigma(F)$(if $\sigma=+$, choose for example $\nu$ to be the $w_F$-conjugate of a suitable strongly dominant coweight). We set
$\Theta^\sigma_F(\lambda):= q^{(-\ell(e^{\lambda+ \nu})+\ell(e^{\nu}))/2}
\tau_{\lambda+\nu}\tau_{\nu}^{-1}.$
This formula does not depend on the choice on $\nu$ such that $\lambda+ \nu \in \Cute^\sigma(F)$
as can be seen by applying again Remark \ref{rema:length}. 

To check the injectivity, bear on Lemma \ref{fond} that states that for all $\lambda\in \tilde \X_*(\T)$, the element $\Theta_F^\sigma(\lambda)$ is equal to the sum $q^{-\ell(\lambda)/2}\t_{e^\lambda}$  and of  a $\Z[q^{\pm 1/2}]$-linear combination of elements $\t_v\in \Htg$ such that $v\in \tilde \W$ and $\ell(v)<\ell(e^\lambda)$.

\end{proof}

\subsubsection{Commutative subalgebras  of $\Htg$}
For all $ \lambda\in\tilde \X_*(\T)$, we set
\begin{equation} \Bb_F^+(\lambda)= q^{\ell(e^{\lambda})/2}\Theta_F^+(\lambda) \:\:  \textrm{and}\:\:\Bb_F^-(\lambda)= q^{\ell(e^{\lambda})/2}\Theta_F^-(\lambda)\end{equation} and we remark in particular that by Lemma \ref{fond},  all these elements lie in $ \Htg$. 
\begin{rem}i. The maps $\Bb_F^+$ and $\Bb_F^-$ do not respect the product in general, but they are mulitplicative within Weyl chambers (see Remark \ref{rema:length}).\\
\label{same} 
ii. Consider the case $F=C$  or $F= x_0$. Then  $\Bb_{C}^+=\Bb_{x_0}^-$ (resp.  $\Bb_{C}^-=\Bb_{x_0}^+$) coincides with the integral Bernstein map $E^+$ (resp. $E$) introduced in \cite{Ann}. \\
 
\end{rem}

\begin{lemma}
For 
   $ \lambda\in \tilde \X_*(\T)$ we have
  $\upiota(\Bb_F^+(\lambda))=(-1)^{\ell(e^\lambda)} \Bb_F^-(\lambda) $ where $\upiota$ is the involution defined in \eqref{involution}.

\label{lemmainvo}
 
\end{lemma}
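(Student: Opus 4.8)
The plan is to establish the identity by comparing the two sides on the generators of the group algebra $\Z[q^{\pm 1/2}][\tilde\X_*(\T)]$, i.e.\ on elements $e^\lambda$ for $\lambda$ in a suitable cone, and then propagating to all of $\tilde\X_*(\T)$ using multiplicativity within Weyl chambers. First I would recall that, by definition, $\Bb_F^+(\lambda) = q^{\ell(e^\lambda)/2}\Theta_F^+(\lambda)$, and that for $\lambda\in\Cute^+(F)$ one has the clean formula $\Theta_F^+(\lambda) = q^{-\ell(e^\lambda)/2}\t_{e^\lambda}$, hence $\Bb_F^+(\lambda)=\t_{e^\lambda}$. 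Symmetrically, for $\lambda\in\Cute^-(F)$ one has $\Bb_F^-(\lambda)=\t_{e^\lambda}$. The key observation is that $\upiota$ should relate the ``positive'' normalization attached to $F$ with the ``negative'' one, and the natural bridge is the element $w_F$ of maximal length in $\Wf_F$, which conjugates $\Cute^+(F)$ to $\Cute^-(F)$; more precisely, $-w_F$ (or rather the map $\lambda\mapsto {}^{w_F}(-\lambda)$, depending on normalization) exchanges the two cones. So I would first verify the identity for $\lambda$ in one of these cones and then reduce the general case.

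For the cone case: take $\lambda\in\Cute^+(F)$, so $\Bb_F^+(\lambda)=\t_{e^\lambda}$. Then $\upiota(\Bb_F^+(\lambda)) = \upiota(\t_{e^\lambda}) = (-1)^{\ell(e^\lambda)}\tg_{e^{-\lambda}}^*$ by the definition \eqref{involution} of $\upiota$, where $\tg_{e^{-\lambda}}^* = q^{\ell(e^{-\lambda})}\t_{e^{-\lambda}}^{-1} = q^{\ell(e^\lambda)}\t_{e^{\lambda}}^{-1}\cdot(\text{correction involving }\t_t\text{'s})$; I would need to unwind this carefully using the braid and quadratic relations together with $\ell(e^{-\lambda})=\ell(e^\lambda)$. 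The claim then becomes that $(-1)^{\ell(e^\lambda)}\tg_{e^{-\lambda}}^* = (-1)^{\ell(e^\lambda)}\Bb_F^-(\lambda)$, i.e.\ $\tg_{e^{-\lambda}}^* = \Bb_F^-(\lambda)$. Now $-\lambda$ lies in $\Cute^-(F)$ when $\lambda\in\Cute^+(F)$ (the cones $\Cute^+(F)$ and $\Cute^-(F)$ are exchanged by $\lambda\mapsto-\lambda$ followed by the $w_F$-action, but for the torus-part this simplifies), so $\Bb_F^-(-\lambda)=\t_{e^{-\lambda}}$; the content is therefore the relation between $\Bb_F^-(\lambda)$ and $\tg^*$ of the opposite translation, which is exactly the ``$\upiota$ sends positive Bernstein basis to negative Bernstein basis'' statement. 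Here I would invoke, or reprove following \cite[Corollary 2]{Ann} and the analogous Iwahori-Hecke computation in \cite[\S5]{Haines}, that $\upiota$ intertwines the two Bernstein maps $E$ and $E^+$; by Remark \ref{same}ii this handles the cases $F=C$ and $F=x_0$ directly, and the general standard facet $F$ is treated by the same argument applied to the cones $\Cute^\pm(F)$ in place of the (anti)dominant cones.

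To pass from a cone to all of $\tilde\X_*(\T)$: for arbitrary $\lambda\in\tilde\X_*(\T)$, write $\lambda = \mu - \nu$ with $\mu,\nu\in\Cute^+(F)$ (possible since $\Cute^+(F)$ generates the group and is a cone; e.g.\ add a large multiple of a strongly dominant $w_F$-conjugate). Then by the multiplicativity of $\Theta_F^+$ we have $\Theta_F^+(\lambda) = \Theta_F^+(\mu)\Theta_F^+(\nu)^{-1}$, hence $\Bb_F^+(\lambda) = q^{(\ell(e^\lambda)-\ell(e^\mu)-\ell(e^\nu))/2}\,\t_{e^\mu}\t_{e^\nu}^{-1}$ (using Remark \ref{rema:length} one could also just track the powers of $q$). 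Applying the ring anti-/automorphism properties of $\upiota$ — it is an algebra automorphism of $\Htg$, so $\upiota(ab^{-1}) = \upiota(a)\upiota(b)^{-1}$ after extending scalars — and the already-established identity on $\mu,\nu\in\Cute^+(F)$ (together with $\upiota(\t_{e^\mu}) = (-1)^{\ell(e^\mu)}\Bb_F^-(\mu)$ etc.), I would get $\upiota(\Bb_F^+(\lambda)) = (-1)^{\ell(e^\mu)+\ell(e^\nu)}q^{\cdots}\Bb_F^-(\mu)\Bb_F^-(\nu)^{-1}$; since $\ell(e^\mu)+\ell(e^\nu) \equiv \ell(e^\lambda)\pmod 2$ (from $\Theta_F^-$ being multiplicative and the analogue of Remark \ref{rema:length} for $\Cute^-(F)$, noting $\ell(e^\mu)=\ell(e^{-\mu})$), and $\Bb_F^-(\mu)\Bb_F^-(\nu)^{-1}$ with the matching $q$-power equals $q^{\ell(e^\lambda)/2}\Theta_F^-(\lambda) = \Bb_F^-(\lambda)$ by multiplicativity of $\Theta_F^-$, the identity follows.

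I expect the main obstacle to be the bookkeeping in the cone case — unwinding $\tg_{w}^* = q^{\ell(w)}\tg_w^{-1}$ for $w=e^{-\lambda}$ via the quadratic relations \eqref{Q} (the $\t_t$-terms and the $q$-factors must conspire correctly), and pinning down the precise interaction between the sign $(-1)^{\ell}$ and the two cones $\Cute^\pm(F)$. The cleanest route is probably to reduce entirely to the Iwahori-Hecke computation of \cite[\S5]{Haines} / \cite[Corollary 2]{Ann}, which already contains exactly this statement for $E$ and $E^+$, and observe that the definition of $\Bb_F^\pm$ via $\Theta_F^\pm$ is formally identical once one replaces the (anti)dominant cone by $\Cute^\pm(F)$, so that the same proof goes through verbatim. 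The parity statement $\ell(e^\mu)+\ell(e^\nu)\equiv\ell(e^{\mu-\nu})\pmod 2$ for $\mu,\nu,\mu-\nu$ all in $\Cute^+(F)$ is immediate from Remark \ref{rema:length}, which removes any sign ambiguity in the final assembly.
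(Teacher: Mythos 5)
Your proposal is correct and follows essentially the paper's own route: write $\lambda=\mu-\nu$ with $\mu,\nu\in\Cute^+(F)$, use multiplicativity of $\Theta_F^{\pm}$ within the cones (Remark \ref{rema:length}), apply $\upiota$ as an algebra automorphism to $\t_{e^{\mu}}\t_{e^{\nu}}^{-1}$, and match the $q$-powers and signs by a parity argument. Two small adjustments: the cone case needs no quadratic relations nor appeal to \cite{Ann}/\cite{Haines}, since for $\lambda\in\Cute^+(F)$ one has $-\lambda\in\Cute^-(F)$ and hence directly $\Bb_F^-(\lambda)=q^{\ell(e^{\lambda})/2}\Theta_F^-(-\lambda)^{-1}=q^{\ell(e^{\lambda})}\t_{e^{-\lambda}}^{-1}=\t_{e^{-\lambda}}^{*}$ (the \enquote{correction involving $\t_t$'s} is spurious); and the parity $\ell(e^{\mu})+\ell(e^{\nu})\equiv\ell(e^{\lambda})\pmod 2$ should not be deduced from Remark \ref{rema:length} (which requires $\mu-\nu$ to lie in the cone, not the general case) but from the fact that $w\mapsto(-1)^{\ell(w)}$ is multiplicative on $\W$, or from the explicit formula $\ell(e^{\lambda})=\sum_{\root\in\Phi^+}\vert\lp\lambda,\root\rp\vert$.
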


\begin{proof} Let $  \lambda\in\tilde \X_*(\T)$ and $\mu, \nu \in \Cute^+(F)$
 such that $\lambda= \mu -\nu$. Then
 $\Bb_F^+(\lambda)=q^{(\ell(e^{\lambda})+ \ell(e^{\nu})- \ell(e^{\mu}))/2} \tau_{e^{\mu}}  \tau_{e^{\nu}}^{-1}$ and
$ \Bb_F^-(\lambda)=q^{(\ell(e^{\lambda})+ \ell(e^{\mu})- \ell(e^{\nu}))/2} \tau_{e^{-\nu}}  \tau_{e^{-\mu}}^{-1}.$
Furthermore, $\ell(e^{\mu})- \ell(e^{\nu})$ and $\ell(e^\lambda)$ have the same parity  and
$\t_{e^{-\nu}} $ and $ \t_{e^{-\mu}}^{-1}$ commute by Remark \ref{rema:length}.  
\end{proof}

Using Lemma \ref{fond} we get the following:

\begin{pro}  Let $F$ be a standard facet. The commutative ring $$\Aa_F^+:=  \Htg\cap \Im(\Theta_F^+), \textrm{ and respectively }\Aa_F^-:=  \Htg\cap \Im(\Theta_F^-), $$ has $\Z$-basis the set of all $\Bb_F^+(\lambda)$, respectively  $\Bb_F^-(\lambda)$, for $\lambda\in \tilde \X_*(\T)$.\end{pro}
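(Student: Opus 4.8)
The plan is to show that $\Aa_F^+=\Htg\cap\Im(\Theta_F^+)$ is a free $\Z$-module with basis $\{\Bb_F^+(\lambda):\lambda\in\tilde\X_*(\T)\}$; the argument for $\Aa_F^-$ is identical (or follows by applying the involution $\upiota$ and Lemma \ref{lemmainvo}). First I would record that each $\Bb_F^+(\lambda)$ does lie in $\Htg$ (not merely in $\Htg\otimes_\Z\Z[q^{\pm 1/2}]$): this is exactly what Lemma \ref{fond} gives, since by the proof of Proposition \ref{themaps} we may write $\lambda=\mu-\nu$ with $\mu,\nu\in\Cute^+(F)$ and then
$$\Bb_F^+(\lambda)=q^{(\ell(e^\lambda)+\ell(e^\nu)-\ell(e^\mu))/2}\,\t_{e^\mu}\t_{e^\nu}^{-1}=\t_{e^\lambda}+\sum_{x} a_x\,\t_x,$$
with $a_x\in\Z$ and $x\in\tilde\W$ of length $<\ell(e^\lambda)$ (indeed $x<e^\lambda$). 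In particular every $\Bb_F^+(\lambda)$ lies in $\Htg\cap\Im(\Theta_F^+)$.

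Next I would prove $\Z$-linear independence of the $\Bb_F^+(\lambda)$ inside $\Htg$. Since $\Htg$ is $\Z$-free on $\{\t_w\}_{w\in\tilde\W}$ and the expansion above has leading term $\t_{e^\lambda}$ with coefficient $1$ together with lower-length corrections, a standard triangularity argument applies: order the (finite) support of any purported $\Z$-linear relation by the length function $\ell$, pick an element $e^\lambda$ of maximal length occurring, and observe that $\t_{e^\lambda}$ appears with the corresponding nonzero coefficient and cannot be cancelled by the contributions of the other $\Bb_F^+(\mu)$ (whose expansions only involve $\t_v$ with $v\le e^\mu$, and in particular $\t_{e^\lambda}$ occurs only for $\mu=\lambda$). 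One must be slightly careful because distinct coweights $\lambda$ can have equal length, but since $\lambda\mapsto e^\lambda$ is injective on $\tilde\X_*(\T)$ and the top term of $\Bb_F^+(\lambda)$ is precisely $\t_{e^\lambda}$, the coefficient of $\t_{e^\lambda}$ in the relation is literally the coefficient attached to $\Bb_F^+(\lambda)$, forcing all coefficients to vanish by descending induction on length. (Alternatively: the $\Z[q^{\pm1/2}]$-algebra map $\Theta_F^+$ is injective by Proposition \ref{themaps}iii, so the $\Theta_F^+(\lambda)$ are $\Z[q^{\pm1/2}]$-independent, hence the $\Bb_F^+(\lambda)=q^{\ell(e^\lambda)/2}\Theta_F^+(\lambda)$ are $\Z$-independent in $\Htg\otimes\Z[q^{\pm1/2}]$, a fortiori in $\Htg$.)

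It then remains to see that the $\Bb_F^+(\lambda)$ \emph{span} $\Aa_F^+$ over $\Z$, which is where the real content sits and which I expect to be the main obstacle. Let $h\in\Htg\cap\Im(\Theta_F^+)$; write $h=\Theta_F^+(P)$ for some $P\in\Z[q^{\pm1/2}][\tilde\X_*(\T)]$, say $P=\sum_\lambda c_\lambda\,\lambda$ with $c_\lambda\in\Z[q^{\pm1/2}]$. I would argue again by triangularity: among the $\lambda$ with $c_\lambda\neq 0$ choose $\lambda_0$ of maximal length. Using the Lemma \ref{fond} expansion $\Theta_F^+(\lambda)=q^{-\ell(e^\lambda)/2}\t_{e^\lambda}+(\text{lower length terms})$, the coefficient of $\t_{e^{\lambda_0}}$ in $h$ equals $q^{-\ell(e^{\lambda_0})/2}c_{\lambda_0}$ (the lower-order terms of the other $\Theta_F^+(\lambda)$ cannot produce $\t_{e^{\lambda_0}}$ since they only involve $\t_v$ with $v< e^\lambda$ and $\ell(\lambda)\le\ell(\lambda_0)$). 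Because $h\in\Htg$, this coefficient lies in $\Z$; but then $q^{\ell(e^{\lambda_0})/2}\cdot(\text{that integer coefficient})=c_{\lambda_0}$ must already be an integer (one checks that the half-integer powers of $q$ force $\ell(e^{\lambda_0})$ to be even here, or more directly that $c_{\lambda_0}\in\Z$ by matching powers of $q$, using that $q$ is a variable). Subtracting $c_{\lambda_0}\,\Bb_F^+(\lambda_0)$ from $h$ (noting $c_{\lambda_0}\Bb_F^+(\lambda_0)=q^{\ell(e^{\lambda_0})/2}\Theta_F^+(c_{\lambda_0}\lambda_0)$, so the difference still lies in $\Htg\cap\Im(\Theta_F^+)$ and its expansion has strictly smaller top length), we finish by induction on $\max\{\ell(e^\lambda):c_\lambda\neq0\}$, with the empty case $h=0$ as base. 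The delicate point throughout is the bookkeeping of half-integral powers of $q$: one has to make sure that the integrality of the $\t_w$-coefficients of $h$ genuinely forces integrality of the $c_\lambda$ rather than only of $q^{1/2}c_\lambda$, which is why it is convenient to phrase everything in terms of the already-integral elements $\Bb_F^+(\lambda)$ and Lemma \ref{fond} rather than directly with $\Theta_F^+$.
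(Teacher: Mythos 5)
Your overall strategy is the paper's: the proposition is stated there as a direct consequence of Lemma \ref{fond}, i.e.\ of the unitriangular expansion $\Bb_F^+(\lambda)=\t_{e^\lambda}+\sum_x a_x\t_x$ with $a_x\in\Z$ and $x<e^\lambda$, $\ell(x)<\ell(e^\lambda)$, which gives containment in $\Aa_F^+$, linear independence, and spanning by an induction on length. Your containment and independence arguments are correct as written.

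There is, however, a genuine error in your spanning step. From $h=\Theta_F^+(P)\in\Htg$ with $P=\sum_\lambda c_\lambda\,\lambda$, the $\t_{e^{\lambda_0}}$-coefficient of $h$ (for $\lambda_0$ of maximal length in the support) is $a:=q^{-\ell(e^{\lambda_0})/2}c_{\lambda_0}$, so integrality of $h$ gives $a\in\Z$, i.e.\ $c_{\lambda_0}=q^{\ell(e^{\lambda_0})/2}a$; this is \emph{not} in general an integer, since $\ell(e^{\lambda_0})$ need not be even (for ${\rm GL}_2$ and a minuscule coweight it equals $1$; e.g.\ $P=q^{1/2}\lambda_0$ has $\Theta_F^+(P)=\t_{e^{\lambda_0}}\in\Htg$ with $c_{\lambda_0}=q^{1/2}$). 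Consequently, subtracting $c_{\lambda_0}\Bb_F^+(\lambda_0)$ does not cancel the top term: the $\t_{e^{\lambda_0}}$-coefficient of $h$ is $a$ while that of $c_{\lambda_0}\Bb_F^+(\lambda_0)$ is $c_{\lambda_0}$, and these differ whenever $\ell(e^{\lambda_0})>0$, so the induction as written stalls. Note also that in this paper $q$ is the actual cardinality of the residue field, not an indeterminate ($\Htg$ is the convolution ring of $\Z$-valued functions), so an argument by \enquote{matching powers of $q$} is not available in any case. The repair is the one you gesture at in your final sentence: write $h=\sum_\lambda b_\lambda\,\Bb_F^+(\lambda)$ with $b_\lambda:=q^{-\ell(e^\lambda)/2}c_\lambda\in\Z[q^{\pm 1/2}]$, and prove $b_\lambda\in\Z$ by descending induction on $\ell(e^\lambda)$: for $\lambda_0$ of maximal length the $\t_{e^{\lambda_0}}$-coefficient of $h$ is exactly $b_{\lambda_0}$, hence integral; then $h-b_{\lambda_0}\Bb_F^+(\lambda_0)$ lies in $\Htg\cap\Im(\Theta_F^+)$ (since $b_{\lambda_0}\Bb_F^+(\lambda_0)=\Theta_F^+(c_{\lambda_0}\lambda_0)$) and its expansion has a strictly smaller set of top-length terms, so the induction goes through. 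With this correction your proof is complete and coincides with the paper's.
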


\begin{pro}\label{prop:commu}
Let $\lambda\in  \tilde \X^+_*(\T)$. \\$\bullet$ For any $t\in \T^0/\T^1$, the basis
element $\t_t\in \Htg$ and  $\Bb_F^+(\lambda)$ commute, as well as  $\t_t$ and  $\Bb_F^-(\lambda)$. \\ \\$\bullet$  Let $\root\in \Pi$.   If $\root\in\Pi_F$,  then

\medskip
 $a) \left\lbrace\begin{array}{ll}
\Bb_F^-(\lambda) \t_{n_\root}^*\in  q \Htg& \textrm{ if $\lp\lambda, \alpha\rp>0$}\cr
 \Bb_F^-(\lambda) \t_{n_\root}^*\in  \t_{n_\root}^*\Htg& \textrm{  if $\lp\lambda, \alpha\rp=0$}\cr\end{array}\right.$
 and $a')\:\left\lbrace\begin{array}{ll}
\Bb_F^+(\lambda) \t_{n_\root}\in  q \Htg& \textrm{ if $\lp\lambda, \alpha\rp>0$}\cr
 \Bb_F^+(\lambda) \t_{n_\root}\in  \t_{n_\root}\Htg& \textrm{  if $\lp\lambda, \alpha\rp=0$}\cr\end{array}\right.$\\\\
 

$\bullet$  If $\root\in\Pi-\Pi_F$, then
 
\medskip
$b) \left\lbrace\begin{array}{ll}
\Bb_F^-(\lambda) \t_{n_\root}\in  q \Htg& \textrm{ if $\lp\lambda, \alpha\rp>0$}\cr
 \Bb_F^-(\lambda) \t_{n_\root}\in  \t_{n_\root}\Htg& \textrm{  if $\lp\lambda, \alpha\rp=0$}\cr\end{array}\right.$
and
$b') \left\lbrace\begin{array}{ll}
\Bb_F^+(\lambda) \t_{n_\root}^*\in  q \Htg& \textrm{ if $\lp\lambda, \alpha\rp>0$}\cr
 \Bb_F^+(\lambda) \t_{n_\root}^*\in  \t_{n_\root}^*\Htg& \textrm{  if $\lp\lambda, \alpha\rp=0$}\cr\end{array}\right.$


\end{pro}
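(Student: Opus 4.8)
The statement is about commutation relations between the Bernstein-type elements $\Bb_F^\pm(\lambda)$ (for $\lambda$ dominant) and the generators $\t_t$ ($t \in \T^0/\T^1$) and $\t_{n_\root}$, $\t_{n_\root}^*$ (for $\root \in \Pi$). The first bullet (commutation with $\t_t$) should follow quickly: $\Bb_F^+(\lambda)$ lies in the image of $\Theta_F^+$, which is a morphism of $\Z[q^{\pm 1/2}]$-algebras from the \emph{commutative} ring $\Z[q^{\pm 1/2}][\tilde\X_*(\T)]$; since $\T^0/\T^1$ sits inside $\tilde\X_*(\T)$ (as the torsion part, or rather as the kernel $\tilde\X_*(\T) \to \X_*(\T)$), and $\t_t = \Theta_F^+(t) = \Bb_F^+(t)$ (length zero), the two elements commute inside $\Im(\Theta_F^+)$. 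The same argument works for $\Theta_F^-$.

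**The core: the $\t_{n_\root}$ relations.** For the remaining bullets I would exploit the involution $\upiota$ via Lemma \ref{lemmainvo}: since $\upiota(\Bb_F^+(\lambda)) = (-1)^{\ell(e^\lambda)}\Bb_F^-(\lambda)$, $\upiota(\t_{n_\root}) = -\t_{n_\root} + \nu_\root$ (Remark \ref{invoNA}), and $\upiota$ sends $\t_w$ to $\pm\t_{w^{-1}}^*$ hence $\t_{n_\root}^* \mapsto \pm\t_{n_\root}$, applying $\upiota$ to any one of the four statements $a), a'), b), b')$ produces another (up to interchanging $+ \leftrightarrow -$, $\t_{n_\root} \leftrightarrow \t_{n_\root}^*$, and noting $\upiota(q\Htg) = q\Htg$, $\upiota(\t_{n_\root}\Htg) = \t_{n_\root}\Htg$ since $\nu_\root \in \Htg$). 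So it suffices to prove, say, $a')$ and $b')$ — i.e. the relations for $\Bb_F^+(\lambda)\t_{n_\root}$ when $\root \in \Pi_F$ and for $\Bb_F^+(\lambda)\t_{n_\root}^*$ when $\root \notin \Pi_F$ — and the other two follow by applying $\upiota$.

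**Proving $a')$ and $b')$.** Write $\Bb_F^+(\lambda) = q^{(\ell(e^\lambda) + \ell(e^\nu) - \ell(e^\mu))/2}\t_{e^\mu}\t_{e^\nu}^{-1}$ with $\mu = \lambda + \nu$, $\nu \in \Cute^+(F)$ strongly dominant (in the $w_F$-conjugate sense), so $\ell(e^\mu) = \ell(e^\lambda) + \ell(e^\nu)$ and the prefactor is $1$; thus $\Bb_F^+(\lambda) = \t_{e^\mu}\t_{e^\nu}^{-1}$ for suitable $F$-positive dominant $\mu, \nu$. For $\root \in \Pi_F$ with $\lp\lambda,\root\rp = 0$: then $\lp\mu,\root\rp = \lp\nu,\root\rp =: c \geq 0$ and I want $\Bb_F^+(\lambda)\t_{n_\root} \in \t_{n_\root}\Htg$. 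The key is that $e^\mu$ and $e^\nu$ both have length that decomposes nicely relative to $s_\root$: using the braid relations and that $e^\mu s_\root$, $e^\nu s_\root$ have length governed by $\lp\mu,\root\rp$, $\lp\nu,\root\rp$, one pushes $\t_{n_\root}$ through. Concretely, if $c > 0$ one shows $\t_{e^\mu}\t_{n_\root} = \t_{e^\mu n_\root}$ with $\ell(e^\mu s_\root) = \ell(e^\mu) + 1$, while $\t_{e^\nu}^{-1}\t_{n_\root}$: here $\ell(e^\nu s_\root) = \ell(e^\nu) + 1$ too (since $\lp\nu,\root\rp > 0$, $\root$ positive, $e^\nu$ dominant), so one uses the quadratic relation on $\t_{n_\root}^2 = \nu_\root \t_{n_\root} + q\t_{h_\root(-1)}$ to rewrite $\t_{e^\nu}^{-1} = q^{-1}(\ldots)$ and extract a factor of $q$ — giving $\Bb_F^+(\lambda)\t_{n_\root} \in q\Htg$ when $\lp\lambda,\root\rp > 0$. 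For $\root \in \Pi - \Pi_F$ the roles of $\Bb^+$ and $\t_{n_\root}^*$ versus $\t_{n_\root}$ swap because $e^\mu$, $e^\nu$ being only $F$-positive (not fully dominant) means $\lp\mu,\root\rp$ may be negative, reversing the length computations; here one works with $\t_{n_\root}^* = q\t_{n_\root}^{-1}$ and the analogous rewriting. I would organize this as two parallel sub-lemmas, one handling $\root \in \Pi_F$ and one handling $\root \in \Pi - \Pi_F$, in each case treating $\lp\lambda,\root\rp > 0$ and $= 0$ separately, and in each case the computation reduces to: (i) a braid relation move bringing $\t_{n_\root}$ adjacent to one of $\t_{e^\mu}$ or $\t_{e^\nu}$ on the side where the length adds, and (ii) a single application of the quadratic relation \eqref{Q} (or its consequence $\t_{n_\root}^* = q\t_{n_\root}^{-1}$) to either absorb $\t_{n_\root}$ into the basis or produce the factor of $q$.

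**Main obstacle.** The delicate point is keeping track of which of $e^\mu s_\root$, $e^\nu s_\root$ increases or decreases in length, and in particular the case $\root \in \Pi - \Pi_F$ where $e^\nu$ is $F$-positive but $\lp\nu,\root\rp$ can be strictly negative or zero depending on finer data — one must choose the auxiliary coweight $\nu$ carefully (strongly $F$-positive, as guaranteed by \cite[Lemma 6.14]{BK} via \S\ref{subsec:strongly}) so that $\lp\nu,\broot\rp > 0$ for all $\broot \in \Phi^+ - \Phi_F^+$ and $= 0$ on $\Phi_F^+$, which pins down all the signs. The other potential pitfall is that $\Bb_F^+(\lambda)$ is only \emph{defined} via the quotient trick $\t_{e^\mu}\t_{e^\nu}^{-1}$ inside $\Htg \otimes \Z[q^{\pm 1/2}]$, so one must verify at the end that the resulting expressions genuinely lie in $\Htg$ (no half-powers of $q$ survive) — but this is already guaranteed by the remark following the definition of $\Bb_F^\pm$ together with Lemma \ref{fond}, so it is a bookkeeping check rather than a real difficulty.
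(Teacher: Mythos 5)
Your overall architecture is the paper's: the first bullet via commutativity of the image of $\Theta_F^{\pm}$, the $n_\root$-relations by writing $\Bb_F^{\pm}(\lambda)$ as a power of $q$ times $\t_{e^{\lambda+\nu}}\t_{e^{\nu}}^{-1}$ for an auxiliary $\nu$, pushing $\t_{n_\root}$ through with braid/length arguments and Lemma \ref{fond}, and halving the work with $\upiota$ via Lemma \ref{lemmainvo}. But the core computation as you set it up contains genuine errors. First, your claim that one can take $\nu\in \Cute^+(F)$ with $\ell(e^{\lambda+\nu})=\ell(e^{\lambda})+\ell(e^{\nu})$, so that ``the prefactor is $1$'' and $\Bb_F^+(\lambda)=\t_{e^{\mu}}\t_{e^{\nu}}^{-1}$, is false unless $\lp\lambda,\root\rp=0$ for all $\root\in\Phi_F$: a dominant $\lambda$ pairs $\geq 0$ with $\Phi_F^+$ while every element of $\Cute^+(F)$ pairs $\leq 0$ with $\Phi_F^+$, so for any root of $\Phi_F^+$ on which $\lambda$ is strictly positive the lengths do not add. (The same confusion produces your sign error: for $\root\in\Pi_F$ and $\mu,\nu\in\Cute^+(F)$ one has $\lp\mu,\root\rp=\lp\nu,\root\rp\leq 0$, not $\geq 0$.) Elements of $\Cute^+(F)$ are not ``$F$-positive dominant''; they are antidominant in the $\Phi_F$-directions.

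Second, your prescription for $\nu$ is internally inconsistent, and the inconsistency hides the real issue. In the computation you take $\nu$ strictly negative on $\Phi_F^+$ (the $w_F$-conjugate of a strongly dominant coweight), while in your last paragraph you say to take $\nu$ strongly $F$-positive, i.e.\ zero on $\Phi_F^+$. The latter is exactly the paper's choice, but for a dominant $\lambda$ it is compatible only with the \emph{minus} decomposition: with $\lp\nu,\root\rp=0$ on $\Phi_F$ one has $\nu,\lambda+\nu\in\Cute^-(F)$ (after scaling), whereas $\lambda+\nu\notin\Cute^+(F)$ unless $\lambda$ kills $\Phi_F$. This is precisely why the paper proves a) and b) (the $\Bb_F^-$ statements) directly and deduces a$'$), b$'$) by $\upiota$ -- the opposite direction from yours. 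If you insist on proving a$'$), b$'$) directly you must take $\nu$ strictly negative on $\Phi_F^+$, and then $e^{\nu}$ no longer commutes with $n_\root$ for $\root\in\Pi_F$; the subcase $\root\in\Pi_F$, $\lp\lambda,\root\rp=0$, where one needs the exact membership $\Bb_F^+(\lambda)\t_{n_\root}\in\t_{n_\root}\Htg$ (not just a factor of $q$), then requires the conjugation identities $\t_{e^{\nu}}^{-1}\t_{n_\root}=\t_{n_\root}\t_{e^{s_\root\nu s_\root}}^{-1}$ and the length-preservation $\ell(e^{s_\root\mu})=\ell(e^{\mu})$ to recombine the result into $\t_{n_\root}\cdot\bigl(q^{a}\t_{e^{s_\root\mu}}\t_{e^{s_\root\nu}}^{-1}\bigr)$ and invoke Lemma \ref{fond} again. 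Your sketch (``use the quadratic relation to rewrite $\t_{e^{\nu}}^{-1}=q^{-1}(\ldots)$'') does not do this and, as written, does not establish any of the four memberships; this is the step that would fail without the corrected choice of $\nu$ and the explicit conjugation bookkeeping that the paper carries out.
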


\begin{proof} 
Let $\nu\in \tilde\X(\T)$ be an element whose image in $\X(\T)$ is the opposite of  a strongly $F$-positive element (\S\ref{subsec:strongly}) and such that $\lambda+\nu \in \Cute_F^-(F)$.
Remark that   $\nu \in \Cute_F^-(F)$  and $e^\nu$    is an element in $\tilde \W$ that commutes with all 
$n_\root$, $\root\in \Pi_F$. We have $$\Bb_F^-(\lambda)= q^{\frac{\ell(e^\lambda)}{2}}\Theta_F^-(\lambda+\nu)\Theta_F^-(-\nu)= q^{\frac{\ell(e^\lambda)+\ell(e^{\nu})-\ell(e^{\lambda+\nu})}{2}} \t_{{e^{\lambda+\nu}}} 
 \t_{{e^{\nu}}}^{-1}.$$

\begin{itemize}
\item[a)] Let $\root\in\Pi_F$. Recall that $\t_{n_\root}^*=  q \t_{n_\root}^{-1}$. Since $s_\root$ and ${e^{\nu}}$ commute in $\W$, we have   $\ell(s _\root e^{\nu})=
\ell(e^{\nu})+1$ and $\t_{n_\root}$ and $\t_{e^\nu}$ commute: $$ \Bb_F^-(\lambda)\t_{n_\root}^*=  q^{\frac{2+\ell(e^\lambda)+\ell(e^\nu)-\ell(e^{\lambda+\nu})}{2}}\t_{e^{\lambda+\nu }}\t_{n_\root}^{-1}\t_{e^{\nu}}^{-1}$$

\begin{itemize}
 \item First suppose that $\lp\lambda, \alpha\rp>0$. Then $\ell(e^{\lambda} s_\root)=\ell(e^{\lambda})-1$
and  $\ell(e^{\lambda+\nu} s_\root)=\ell(e^{\lambda+\nu})-1$. Therefore,
$\t_{e^{\lambda+\nu}}=\t_{e^{\lambda+\nu}n_\root^{-1}}\t_{n_\root}$ and
$$ \Bb_F^-(\lambda)\t_{n_\root}^*=  q^{\frac{2+\ell(e^\lambda)+\ell(e^\nu)-\ell(e^{\lambda+\nu})}{2}}\t_{e^{\lambda+\nu }n_\root^{-1}}\t_{e^{\nu}}^{-1}=  q^{\frac{2+\ell(e^\lambda n_\root^{-1})+\ell(e^\nu)-\ell(e^{\lambda+\nu} n_\root^{-1})}{2}}\t_{e^{\lambda+\nu }n_\root^{-1}}\t_{e^{\nu}}^{-1}$$
which is an element  of  $q\Htg $ by Lemma \ref{fond}.\\

\item  Now suppose that $\lp\lambda, \alpha\rp=0$ so that $e^\lambda$, $e^\nu$ and $\t_{n_\alpha}$ commute.
We have $\ell(s_\root e^{\lambda+\nu})=\ell(e^{\lambda+\nu})+1$ so $ \t_{e^{\lambda+\nu}}\t_{n_\root}^{-1}
=\t_{n_\root}^{-1}\tau_{e^{\lambda+\nu}}$

$$ \Bb_F^-(\lambda)\t_{n_\root}^*=  q^{\frac{2+\ell(e^\lambda)+\ell(e^\nu)-\ell(e^{\lambda+\nu})}{2}}\t_{n_\root}^{-1}\t_{e^{\lambda+\nu }}\t_{e^{\nu}}^{-1}=\t_{n_\root}^* \Bb_F^-(\lambda).$$

\end{itemize}
\item[b)] Let $\root\in \Pi-\Pi_F$. We have  $\lp \nu, \root\rp<0$ so that $\ell(e^{\nu} s_\root)= \ell(e^{\nu})+1$
and $\t_{e^{\nu}}^{-1}\t_{n_\root}=\t_{n_\root} \t_{e^{s_\root \nu s_\root}}^{-1}$:
$$\Bb_F^-(\lambda)\t_{n_\root}= q^{\frac{\ell(e^\lambda)+\ell(e^{\nu})-\ell(e^{\lambda+\nu})}{2}} \t_{e^{\lambda+\nu}}\t_{n_\root}
 \t_{e^{s_\root\nu s_\root}}^{-1}.$$
Since  $\lp \nu+\lambda, \root\rp\leq0$ we have $\ell(e^{\nu+\lambda} s_\root)= \ell(e^{\nu+\lambda})+1$ and
$$\Bb_F^-(\lambda)\t_{n_\root}= q^{\frac{\ell(e^\lambda)+\ell(e^{\nu})-\ell(e^{\lambda+\nu})}{2}} \t_{e^{\lambda+\nu}n_\root}
 \t_{e^{s_\root\nu s_\root}}^{-1}.$$

\begin{itemize}
\item First suppose that $\lp\lambda, \alpha\rp>0$. Then $\ell(e^{\lambda} s_\root)=\ell(e^{\lambda})-1$
$$\Bb_F^-(\lambda)\t_{n_\root}= q^{\frac{2+ \ell(e^\lambda n_\root)+\ell(e^{s_\root\nu s_\root})-\ell(e^{\lambda+\nu}n_\root)}{2}} \t_{e^{\lambda+\nu}n_\root}
 \t_{e^{s_\root\nu s_\root}}^{-1}$$ which is an element  of  $q\Htg $ by Lemma \ref{fond}.\\

\item Now suppose that $\lp\lambda, \alpha\rp=0$ that it to say that $e^\lambda$ and $s_\root$ commute.
We have  $\t_{e^{\lambda+\nu}}\t_{n_\root}=\t_{n_\root} \t_{e^{\lambda+s_\root \nu s_\root}}$ and
$$\Bb_F^-(\lambda)\t_{n_\root}= \t_{n_\root}q^{\frac{\ell(e^\lambda)+\ell(e^{\nu})-\ell(e^{\lambda+\nu})}{2}} \t_{e^{\lambda+ s_\root\nu s_\root}}
 \t_{e^{s_\root\nu s_\root}}^{-1}.$$ which is an element  of  $\t_{n_\root}\Htg $ by Lemma \ref{fond}.\\

  \end{itemize}

\end{itemize}
Statements a') and b') follow applying Lemma \ref{lemmainvo} since 
$\upiota(\t_{n_\root})=\t_{n_\root}^*\t_{h_\root(-1)}$ and 
$\t_{h_\root(-1)}$  is invertible in
$\Htg$.
\end{proof}

\medskip

\subsubsection{\label{Levi}} Let $\M_F$ be the Levi subgroup of $\Gp$ corresponding to the facet $F$ as in \ref{LeviDefi}. We also refer to the notations introduced in \ref{HeckeRingF}.

\begin{lemma} \label{lemma:induction} For $\lambda\in \tilde \X_*^+(\T)$,  the element $(-1)^{\ell_F(e^\lambda)}\upiota^F(\t^F_{e^{\lambda}})\in \Hh_F(\M_F)$ is equal to the sum of $\t_{e^\lambda}^F$ and  a linear combination with coefficients in $\Z$  of   $\t_{\tilde v}^F$ for   $F$-positive elements $v\in \W_F$ such that $v\underset{F}< e^\lambda$. Furthermore, we have
\begin{equation}\label{f:induction}j_F^+( (-1)^{\ell_F(e^\lambda)}\upiota^F(\t^F_{e^{\lambda}})))= \Bb_F^+(\lambda).\end{equation}In particular  for $F= x_0$, 
\begin{equation}\label{f:inductionpart}(-1)^{\ell(e^\lambda)}\upiota(\t_{e^{\lambda}})= \Bb_{x_0}^+(\lambda).\end{equation}

\end{lemma}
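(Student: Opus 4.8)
The plan is to prove the two displayed formulas by reduction to the known facts about the involution $\upiota^F$ on $\tilde {\rm H}_\Z(\M_F)$ together with the compatibility of $j_F^+$ with lengths on $F$-positive elements. First I would record the shape of $\upiota^F(\t^F_{e^\lambda})$: by the definition \eqref{involution} applied inside $\tilde {\rm H}_\Z(\M_F)$ we have $\upiota^F(\t^F_{e^\lambda}) = (-1)^{\ell_F(e^\lambda)} (\t^F_{(e^\lambda)^{-1}})^{*,F} = (-1)^{\ell_F(e^\lambda)} q^{\ell_F(e^\lambda)}(\t^F_{e^{-\lambda}})^{-1}$. Now apply Lemma \ref{fond} in the Hecke ring of $\M_F$ with $v=w=e^\lambda$ (using that $e^\lambda$ is dominant for the root system of $\M_F$, so $\ell_F(e^{2\lambda}) = 2\ell_F(e^\lambda)$ by Remark \ref{rema:length} applied to $\Cute^+(F)$ in the Levi): this gives $q^{\ell_F(e^\lambda)} \t^F_{e^\lambda}(\t^F_{e^{-\lambda}})^{-1} = \t^F_{e^{2\lambda}} + \sum_x a_x \t^F_x$ with $\ell_F(x) < 2\ell_F(e^\lambda)$; dividing by $\t^F_{e^\lambda}$ (via the braid relation $\t^F_{e^{2\lambda}} = (\t^F_{e^\lambda})^2$) yields $q^{\ell_F(e^\lambda)}(\t^F_{e^{-\lambda}})^{-1} = \t^F_{e^\lambda} + (\text{lower terms})$. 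Hence $(-1)^{\ell_F(e^\lambda)}\upiota^F(\t^F_{e^\lambda}) = \t^F_{e^\lambda} + \sum_{v} c_v \t^F_{\tilde v}$ with $v \underset{F}< e^\lambda$; to see the $v$ occurring are $F$-positive, invoke Lemma \ref{F-positive}ii (or its evident generalization to $\tilde\W_F$), since $v \underset{F}\leq e^\lambda$ forces $v$ to be $F$-positive.

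Next I would establish \eqref{f:induction}. The map $j_F^+$ sends $\t^F_w \mapsto \t_w$ for $F$-positive $w$, so applying $j_F^+$ to the expression just obtained gives $j_F^+((-1)^{\ell_F(e^\lambda)}\upiota^F(\t^F_{e^\lambda})) = \t_{e^\lambda} + \sum_v c_v \t_{\tilde v}$, a $\Z$-linear combination of basis elements $\t_v$ of $\Htg$ with $\ell(v) < \ell(e^\lambda)$ — here I use \eqref{lengthF} (the key identity that lengths $\ell$ and $\ell_F$ agree on differences of $F$-positive coweights, so that the Lemma \ref{fond} computation done in the Levi transports to the same statement about $\ell$). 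On the other hand, by the definition of $\Bb_F^+$ and the characterization in the proof of Proposition \ref{themaps}iii, the element $\Bb_F^+(\lambda) = q^{\ell(e^\lambda)/2}\Theta_F^+(\lambda)$ is also equal to $\t_{e^\lambda}$ plus a $\Z$-linear combination of $\t_v$ with $\ell(v) < \ell(e^\lambda)$. So both sides of \eqref{f:induction} are elements of $\Htg$ with the same leading term; I must show the lower-order terms agree. The cleanest way is to run the identical Lemma \ref{fond}-based computation on both sides: $\Bb_F^+(\lambda)$ is by its very definition $q^{(\ell(e^\lambda)+\ell(e^\nu)-\ell(e^{\lambda+\nu}))/2}\t_{e^{\lambda+\nu}}\t_{e^\nu}^{-1}$ for suitable $F$-positive $\nu$ with $\lambda + \nu \in \Cute^+(F)$, and one checks by a direct manipulation (pulling $\t_{e^\nu}$ through, using \eqref{lengthF} to match exponents) that this coincides with $j_F^+$ applied to the analogous product computed in $\tilde {\rm H}_\Z(\M_F)$; since $\upiota^F(\t^F_{e^\lambda})$ unwinds to exactly such a product of $\t^F$'s, the two agree.

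The specialization \eqref{f:inductionpart} is then immediate: take $F = x_0$, where $\M_{x_0} = \Gp$, $\tilde {\rm H}(\M_{x_0}) = \Htg$, $\ell_{x_0} = \ell$, $j_{x_0}^+ = \id$, and $\upiota^{x_0} = \upiota$, so \eqref{f:induction} reads $(-1)^{\ell(e^\lambda)}\upiota(\t_{e^\lambda}) = \Bb_{x_0}^+(\lambda)$.

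The main obstacle I anticipate is the bookkeeping in \eqref{f:induction} — specifically, verifying that $j_F^+$ really intertwines the two Lemma \ref{fond} computations, one carried out over $\M_F$ with length $\ell_F$ and one over $\Gp$ with length $\ell$. This hinges entirely on \eqref{lengthF}: the exponents of $q$ appearing in the Bernstein-type products are differences of lengths of $F$-positive coweights, and \eqref{lengthF} guarantees those differences are the same whether measured by $\ell$ or $\ell_F$, so that $j_F^+$ (which is only an algebra map on the positive part, not on all of $\tilde {\rm H}_\Z(\M_F)$) does send the Levi product to the correct element of $\Htg$. I would be careful to phrase everything in terms of $F$-positive elements throughout so that $j_F^+$ is applied only where it is defined and multiplicative.
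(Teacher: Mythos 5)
Your proposal is correct and follows essentially the paper's own route: you rewrite $(-1)^{\ell_F(e^\lambda)}\upiota^F(\t^F_{e^{\lambda}})$ as $q^{\ell_F(e^\lambda)}(\t^F_{e^{-\lambda}})^{-1}$, deduce the first assertion from Lemma \ref{fond} applied in the Hecke ring of $\M_F$ together with Lemma \ref{F-positive}, and get \eqref{f:induction} by comparing the Bernstein-type presentations through $j_F$ with \eqref{lengthF} matching the powers of $q$ --- the paper packages your ``unwinding'' step as the independence of $\theta_F(\lambda)=q^{(\ell_F(\nu)-\ell_F(\mu))/2}\t^F_{e^{\mu}}(\t^F_{e^{\nu}})^{-1}$ of the decomposition $\lambda=\mu-\nu$ into coweights antidominant on $\Phi_F^+$, evaluated once at $(\mu,\nu)=(0,-\lambda)$ and once at $(\lambda+\nu,\nu)$ with $\nu,\lambda+\nu\in\Cute^+(F)$, and then observes that $j_F$ and $j_F^+$ agree on the resulting ($F$-positively supported) element. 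The only tightening needed is cosmetic: apply Lemma \ref{fond} directly with $v=1$, $w=e^\lambda$ instead of dividing $\t^F_{e^{2\lambda}}+\cdots$ by $\t^F_{e^\lambda}$ (as written that division does not obviously preserve integrality and the Bruhat bound), and note that the within-Levi identification of the two presentations rests on additivity of $\ell_F$ on the cone of coweights antidominant on $\Phi_F^+$ (the Levi analogue of Remark \ref{rema:length}), with \eqref{lengthF} entering only when transporting the expression to $\Htg$ via $j_F$.
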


\begin{proof}  In  $\Hh_\Z(\M_F)\otimes _{\Z}\Z[q^{\pm 1/2}]$, we have $(-1)^{\ell_F(e^{\lambda})}\upiota^F(\t
^F_{e^{\lambda}})= q^{\ell_F(e^\lambda)}(\t^F_{e^{-\lambda}})^{-1}$.  Lemma \ref{fond} for the Hecke algebra of $\M_F$  then gives the first statement. Use  Lemma \ref{F-positive} for the result about $F$-positivity.

\medskip

By an argument similar to the one in  the proof of Proposition \ref{themaps} (in the setting of the root system corresponding to $\M_F$), the element  $$\theta_F(\lambda):= q^{(\ell_F(\nu)-\ell_F(\mu))/2}\t^F_{e^{\mu}} (\t^F_{e^{\nu}})^{-1}\in \Hh_\Z(\M_F)\otimes _{\Z}\Z[q^{\pm 1/2}]$$ does not depend on the choice of $\lambda,\nu\in \X_*(\T)$ such that
$\lambda=\mu-\nu$ and $\lp \mu, \root\rp\leq 0$,  $\lp \nu, \root\rp\leq 0$ for all $\root\in \Phi_F^+$.

\noindent Choose  $\mu,\nu\in \Cute^+(F)$ such that $\lambda= \mu-\nu$. Then 
$j_F(q^{\ell_F(\lambda)/2}\theta_F(\lambda))= q^{(\ell_F(\lambda)+\ell_F(\nu)-\ell_F(\mu))/2}\t_{e^{\mu}} (\t_{e^{\nu}})^{-1}$ because $\mu$ and $\nu$ are in particular $F$-positive. By Equality \eqref{lengthF}, we therefore have 
$\Bb_F^+(\lambda)=j_F(q^{\ell_F(\lambda)/2} \theta_F(\lambda)) $. 
Now choose $\mu=0$ and $\nu=-\lambda$. We have 
$q^{\ell_F(\lambda)/2} \theta_F(\lambda)=(-1)^{\ell_F(e^{\lambda})}\upiota^F(\t
^F_{e^{\lambda}})$ and therefore $j_F(q^{\ell_F(\lambda)/2} \theta_F(\lambda)) =j_F^+((-1)^{\ell_F(e^{\lambda})}\upiota^F(\t
^F_{e^{\lambda}}))$.

\end{proof}

\medskip

\subsection{\label{subsec:satake}Satake isomorphism}
Let $\chi$ be a character of $\H$ with values in $k$ and $F_\chi$ the associated standard facet
as in \ref{param}. \begin{lemma} We have a morphism of $k$-algebras
\begin{equation}\label{firstiso}\begin{array}{ccc}
\bar\chi\otimes_{k[\T^0/\T^1]}k[ \tilde \X^+_*(\T)]&\longrightarrow &\Hom_\Hh(\chi\otimes_\H\Hh,\chi\otimes_\H\Hh )\cr
1\otimes\lambda&\longmapsto & (1\otimes 1\mapsto 1\otimes \Bb_{F_\chi}^+(\lambda))\cr\end{array}
\end{equation}

\end{lemma}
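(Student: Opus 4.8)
The plan is to make the right $\Hh$-module $\chi\otimes_\H\Hh$ explicit and reduce the assertion to commutation identities already recorded in Proposition \ref{prop:commu}. The surjection $\Hh\to\chi\otimes_\H\Hh$, $h\mapsto 1\otimes h$, is right $\Hh$-linear with kernel the right ideal $N:=(\ker\chi)\Hh$, so $\chi\otimes_\H\Hh\cong\Hh/N$; since $1\otimes 1$ generates it over $\Hh$, a right $\Hh$-module endomorphism of $\chi\otimes_\H\Hh$ is the same as an element $b\in\Hh$ with $bN\subseteq N$, acting by $1\otimes h\mapsto 1\otimes bh$, two such $b$ giving the same endomorphism iff they agree mod $N$. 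Thus, for $\lambda\in\tilde\X_*^+(\T)$, I must check that $b=\Bb_{F_\chi}^+(\lambda)$ satisfies $bN\subseteq N$, equivalently $\Bb_{F_\chi}^+(\lambda)\,a-\chi(a)\,\Bb_{F_\chi}^+(\lambda)\in N$ for all $a\in\H$. As $\H$ is generated as a $k$-algebra by the $\t_t$ ($t\in\T^0/\T^1$) and the $\t_{n_\root}$ ($\root\in\Pi$), and $a\mapsto\Bb_{F_\chi}^+(\lambda)a-\chi(a)\Bb_{F_\chi}^+(\lambda)\bmod N$ is $k$-linear and satisfies a Leibniz rule (so its vanishing locus is a subalgebra), it suffices to treat these generators.

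For $a=\t_t$ this is immediate: by the first bullet of Proposition \ref{prop:commu}, $\t_t$ commutes with $\Bb_{F_\chi}^+(\lambda)$, and $\chi(\t_t)=\bar\chi(t)$, so the expression equals $(\t_t-\chi(\t_t))\Bb_{F_\chi}^+(\lambda)\in N$. For $a=\t_{n_\root}$ with $\root\in\Pi_{F_\chi}=\Pi_\chi$ one has $\chi(\t_{n_\root})=0$, and assertion a') of Proposition \ref{prop:commu} (applicable since $\root\in\Pi_{F_\chi}$), together with $\langle\lambda,\root\rangle\ge0$ (dominance of $\lambda$) and $q=0$ in $k$, gives $\Bb_{F_\chi}^+(\lambda)\t_{n_\root}\in\t_{n_\root}\Hh\subseteq N$. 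The essential case is $\root\in\Pi\setminus\Pi_{F_\chi}$, which I expect to be the main obstacle. Here I would use the quadratic relation \eqref{Q}, which rewrites as $(\t_{n_\root}-\nu_\root)\t_{n_\root}=q\t_{h_\root(-1)}$, hence (dividing by $\t_{n_\root}$ in $\Htg\otimes_\Z\Z[q^{\pm1}]$ and noting integrality of all terms) $\t_{n_\root}=\nu_\root+\t_{n_\root}^*\t_{h_\root(-1)}$ in $\Hh$, where $\nu_\root=\sum_{t\in\T_\root(\fq)}\t_t$. Thus $\Bb_{F_\chi}^+(\lambda)\t_{n_\root}=\nu_\root\,\Bb_{F_\chi}^+(\lambda)+\Bb_{F_\chi}^+(\lambda)\t_{n_\root}^*\t_{h_\root(-1)}$, having commuted $\nu_\root$ past $\Bb_{F_\chi}^+(\lambda)$; by assertion b') of Proposition \ref{prop:commu} (valid precisely because $\root\notin\Pi_{F_\chi}$), $q=0$ and $\langle\lambda,\root\rangle\ge0$, the second summand lies in $\t_{n_\root}^*\Hh$.

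The crux is then the identity $\chi(\t_{n_\root})=\chi(\nu_\root)$ for $\root\notin\Pi_\chi$. Applying $\chi$ to \eqref{Q} and using $q=0$ gives $\chi(\t_{n_\root})^2=\chi(\nu_\root)\chi(\t_{n_\root})$, so $\chi(\t_{n_\root})\in\{0,\chi(\nu_\root)\}$; and if $\chi(\t_{n_\root})=0$ then, since $\root\notin\Pi_\chi$, the very definition of $\Pi_\chi$ in \S\ref{characters} forces $\root\notin\Pi_{\bar\chi}$, hence $\bar\chi$ is nontrivial on $\T_\root(\fq)$, hence $\chi(\nu_\root)=\sum_{t\in\T_\root(\fq)}\bar\chi(t)=0$ as well; so in all cases $\chi(\t_{n_\root})=\chi(\nu_\root)$. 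Granting this, applying $\chi$ to the identity $\t_{n_\root}=\nu_\root+\t_{n_\root}^*\t_{h_\root(-1)}$ shows $\chi(\t_{n_\root}^*)=0$, so $\t_{n_\root}^*\in\ker\chi$ and $\t_{n_\root}^*\Hh\subseteq N$; moreover $\nu_\root-\chi(\nu_\root)\in\ker\chi$, so $\nu_\root\,\Bb_{F_\chi}^+(\lambda)\equiv\chi(\nu_\root)\,\Bb_{F_\chi}^+(\lambda)\pmod{N}$. Combining, $\Bb_{F_\chi}^+(\lambda)\t_{n_\root}\equiv\chi(\nu_\root)\Bb_{F_\chi}^+(\lambda)=\chi(\t_{n_\root})\Bb_{F_\chi}^+(\lambda)$, as required.

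With each $\Psi(\lambda):=\bigl(1\otimes1\mapsto1\otimes\Bb_{F_\chi}^+(\lambda)\bigr)$ now well defined in $\Hom_\Hh(\chi\otimes_\H\Hh,\chi\otimes_\H\Hh)$, the rest is formal. Since $\ell(e^\lambda)=\sum_{\root\in\Phi^+}\langle\lambda,\root\rangle$ is additive on the dominant cone, one has $\Bb_{F_\chi}^+(\lambda)\Bb_{F_\chi}^+(\mu)=\Bb_{F_\chi}^+(\lambda+\mu)$ in $\Hh$ for $\lambda,\mu\in\tilde\X_*^+(\T)$ (using that $\Theta_{F_\chi}^+$ is an algebra map), so $\lambda\mapsto\Psi(\lambda)$ is multiplicative, and it is unital because $\Bb_{F_\chi}^+(0)=\t_1=1$. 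Extending $k$-linearly gives a $k$-algebra morphism $k[\tilde\X_*^+(\T)]\to\Hom_\Hh(\chi\otimes_\H\Hh,\chi\otimes_\H\Hh)$; and because $\Bb_{F_\chi}^+(t)=\t_t$ for $t\in\T^0/\T^1$ (its length being $0$) while $\t_t-\chi(\t_t)\in\ker\chi\subseteq N$, this morphism sends $t-\bar\chi(t)$ to $0$, hence descends to $\bar\chi\otimes_{k[\T^0/\T^1]}k[\tilde\X_*^+(\T)]$, yielding the claimed morphism of $k$-algebras. The only genuinely nontrivial input is the case $\root\in\Pi\setminus\Pi_{F_\chi}$ above — specifically the identity $\chi(\t_{n_\root})=\chi(\nu_\root)$ and the control of $\Bb_{F_\chi}^+(\lambda)\t_{n_\root}^*$ through assertion b') of Proposition \ref{prop:commu}; all remaining steps are bookkeeping.
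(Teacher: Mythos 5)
Your proof is correct and takes essentially the same route as the paper: both reduce well-definedness to the statement that $1\otimes \Bb_{F_\chi}^+(\lambda)$ is an eigenvector for the right action of $\H$ with eigencharacter $\chi$, verify this on the generators $\t_t$ and $\t_{n_\root}$ of $\H$, and draw the needed commutation facts from Proposition \ref{prop:commu}, with multiplicativity coming from additivity of $\ell(e^\lambda)$ on the dominant cone. The only difference is organizational: for $\root\in\Pi\setminus\Pi_\chi$ you use the integral identity $\t_{n_\root}=\nu_\root+\t_{n_\root}^*\t_{h_\root(-1)}$ together with $\chi(\t_{n_\root})=\chi(\nu_\root)$, whereas the paper splits this into the subcases $\root\in\Pi_{\bar\chi}\setminus\Pi_\chi$ and $\root\notin\Pi_{\bar\chi}$ and argues with the idempotent $\epsilon_{\bar\chi}$ and the simplified quadratic relations \eqref{Q'}; the mathematical content is the same.
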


\begin{proof} We have to check  that, for $\lambda\in \tilde \X^+_*(\T)$, the element $1\otimes \Bb_{F_\chi}^+(\lambda)$ is an eigenvector for the right action of $\H$ and the character $\chi$. Recall that the finite Hecke algebra $\H$ is generated by all $\t_t$, $t\in \T^0/\T^1$ and $\t_{n_\root}$ for $\root\in \Pi$.\begin{itemize}\item  First note that for  $t\in \T^0/\T^1$, we have $\Bb_F^+(t+\lambda)=\t_t\Bb_F^+(\lambda)$. Therefore $\t_t$ acts on   $1\otimes \Bb_{F_\chi}^+(\lambda)$  by multiplication by $\chi(\t_t)$ and $\epsilon_{\bar \chi}$ acts by $1$.
\item  Let $\root\not\in \Pi_{\bar\chi}$. We have $\chi(\t_{n_\root})=0$. By the quadratic relations \eqref{Q'}, we have $\epsilon_{\bar\chi}\t_{n_\root}^*=
\bar\chi(h_\root(-1))\,\epsilon_{\bar\chi}\t_{n_\root}$ in $\Hh$. Since $ \Pi_\chi\subseteq \Pi_{\bar\chi}$,
 proposition \ref{prop:commu} b') implies that $\t_{n_\root}$ acts by $0$ on $1\otimes \Bb_{F_\chi}^+(\lambda)$.\item Let $\root\in \Pi_{\bar\chi}- \Pi_{\chi}$. 
We have $\chi(\t_{n_\root})=-1$ and by the quadratic relations \ref{Q'}, 
$\epsilon_{\bar\chi}\t_{n_\root}^*=
\epsilon_{\bar\chi}(\t_{n_\root}+1)$ in $\Hh$, which by proposition \ref{prop:commu} b'),  acts by $0$ 
on $1\otimes \Bb_{F_\chi}^+(\lambda)$.
\item  Let $\root\in  \Pi_{\chi}$. 
We have $\chi(\t_{n_\root})=0$ and by proposition \ref{prop:commu} a'),  $\t_{n_\root}$ acts by $0$
on $1\otimes \Bb_{F_\chi}^+(\lambda)$.\end{itemize}
We have proved that   \eqref{firstiso} is a well defined map.
It is  a morphism of $k$-algebras by Remark \ref{same}i.

\end{proof}
\begin{pro}\label{prop:main} The map \eqref{firstiso} is an isomorphism of $k$-algebras.

\end{pro}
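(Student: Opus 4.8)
The plan is to prove that \eqref{firstiso} is bijective, the fact that it is a $k$-algebra homomorphism being already contained in the preceding lemma. Writing $E:=\{m\in\chi\otimes_\H\Hh:\ m\cdot h=\chi(h)\,m\ \text{for all }h\in\H\}$, the adjunction between $-\otimes_\H\Hh$ and restriction of scalars identifies $\Hom_\Hh(\chi\otimes_\H\Hh,\chi\otimes_\H\Hh)$ with $E$ (a homomorphism being determined by the image $m$ of $1\otimes 1$), so it suffices to show that $\lambda\mapsto 1\otimes\Bb_{F_\chi}^+(\lambda)$ extends to a $k$-linear isomorphism of $\bar\chi\otimes_{k[\T^0/\T^1]}k[\tilde\X^+_*(\T)]$ onto $E$. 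After fixing lifts $e^\lambda\in\tilde\X_*(\T)$ of the dominant coweights, the source has $k$-basis $\{1\otimes e^\lambda\}_{\lambda\in\X^+_*(\T)}$ (freeness of $k[\tilde\X^+_*(\T)]$ over $k[\T^0/\T^1]$), while $\chi\otimes_\H\Hh$ has $k$-basis $\{1\otimes\t_{\tilde d}\}_{d\in\Dd}$ by Proposition \ref{libertepro}(iii) and Lemma \ref{apple}.

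For injectivity I would argue by triangularity. By Lemma \ref{fond} (applied as in the proof of Proposition \ref{themaps}), for every $\lambda\in\tilde\X_*(\T)$ one has $\Bb_{F_\chi}^+(\lambda)=\t_{e^\lambda}+\sum_x a_x\t_x$ in $\Htg$ with $a_x\in\Z$ and $x\in\tilde\W$ ranging over elements with $x<e^\lambda$ and $\ell(x)<\ell(e^\lambda)$. When $\lambda$ is dominant, $e^\lambda\in\Dd$ (Remark \ref{rema:D}); and for each such $x$, writing $x\in\Wf e^\nu\Wf$ with $\nu\in\X^+_*(\T)$, $\nu\preceq\lambda$ by \eqref{connu}, the coset $\Wf\bar x$ has representative $d\in\Dd\cap e^\nu\Wf$ with $d\leq e^\nu$ (Lemma \ref{photo}(i)), so $1\otimes\t_x$ is a scalar multiple of $1\otimes\t_{\tilde d}$. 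Hence $1\otimes\Bb_{F_\chi}^+(\lambda)$ equals a unit times $1\otimes\t_{e^\lambda}$ plus a $k$-combination of basis vectors $1\otimes\t_{\tilde d}$ with $d\neq e^\lambda$ and $d\leq e^\nu$ for some dominant $\nu\preceq\lambda$. Given a finite relation $\sum_\lambda a_\lambda(1\otimes\Bb_{F_\chi}^+(\lambda))=0$, I would pick $\lambda_0$ that is $\preceq$-maximal among those with $a_{\lambda_0}\neq 0$; since $e^{\lambda_0}\leq e^\nu\Leftrightarrow\lambda_0\preceq\nu$, the basis vector $1\otimes\t_{e^{\lambda_0}}$ receives a contribution only from the index $\lambda=\lambda_0$, whence $a_{\lambda_0}=0$, a contradiction.

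For surjectivity I would bring in the Cartan grading. The Cartan decomposition gives $\cal H(\Gp,\rho)=\bigoplus_{\kappa\in\X^+_*(\T)}\cal H(\Gp,\rho)_\kappa$, where $\cal H(\Gp,\rho)_\kappa$ consists of the operators $T$ with $T(\1_{\K,v})$ supported on $\K\kappa(\varpi)\K$; via Corollary \ref{CoroIso}, Lemma \ref{apple} and Remark \ref{support} this transports to $E=\bigoplus_\kappa E_\kappa$ with $E_\kappa=E\cap M_\kappa$, where $M_\kappa$ is the span of the $1\otimes\t_{\tilde d}$ with $\K\hat{\tilde d}\I\subseteq\K\kappa(\varpi)\K$, and $\dim_k E_\kappa\leq 1$ by Corollary \ref{CoroIso}(2). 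Using that $\lambda(\varpi)$ lifts $e^{-\lambda}$ and that $\Wf$ lifts into $\K$, one gets $\K\hat{e^\mu}\I\subseteq\K\hat{e^\mu}\K=\K\mu^*(\varpi)\K$ with $\mu^*:=-w_0\mu$; since $\nu\preceq\lambda\Rightarrow\nu^*\preceq\lambda^*$ ($-w_0$ permuting the simple coroots), the triangularity above places $1\otimes\Bb_{F_\chi}^+(\lambda)$ in $\bigoplus_{\nu\preceq\lambda}M_{\nu^*}$ with $M_{\lambda^*}$-component equal to a unit times $1\otimes\t_{e^\lambda}$ plus terms $1\otimes\t_{\tilde d}$ with $d\in\Dd\cap e^\lambda\Wf$, $d\neq e^\lambda$ — hence nonzero. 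As $E$ is graded, this component lies in $E_{\lambda^*}$, so $E_{\lambda^*}\neq 0$, $\dim_k E_\kappa=1$ for all $\kappa$, and the $M_{\lambda^*}$-component of $1\otimes\Bb_{F_\chi}^+(\lambda)$ spans $E_{\lambda^*}$. Given $m\in E$ of finite support, I would take $\kappa_0$ $\preceq$-maximal in its support, subtract the unique multiple of $1\otimes\Bb_{F_\chi}^+(-w_0\kappa_0)$ killing the $\kappa_0$-component, and iterate; as $\{\kappa:\kappa\preceq\kappa_0\}$ is finite by \eqref{connu} this downward induction terminates and writes $m$ in the image. (In particular this reproves Remark \ref{dim1}.)

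The step I expect to be most delicate is the interplay of the two partial orders: the Bruhat order on $\tilde\W$, through which Lemma \ref{fond} delivers the triangularity of $\Bb_{F_\chi}^+(\lambda)$, and the order $(\X^+_*(\T),\preceq)$ indexing the Cartan grading of $E$, matched across the projection $\W\to\Wf\backslash\W$ and the twist $\mu\mapsto-w_0\mu$ — in particular checking that the lower-order terms of $\Bb_{F_\chi}^+(\lambda)$ land in strictly smaller Cartan components while the leading term $1\otimes\t_{e^\lambda}$ genuinely survives in $M_{\lambda^*}$. The other ingredients (the adjunction description of $E$, the freeness furnishing the basis of the source, and the termination of the downward induction) are routine.
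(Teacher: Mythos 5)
Your proposal is correct and follows essentially the same route as the paper: injectivity by the triangularity of $1\otimes \Bb_{F_\chi}^+(\lambda)$ with respect to the basis $\{1\otimes\t_{\tilde d}\}_{d\in\Dd}$ coming from Lemma \ref{fond}, and surjectivity by decomposing the target along Cartan double cosets into pieces that are at most one dimensional (Corollary \ref{CoroIso}) and running an induction using the nonvanishing leading component. The only differences are bookkeeping: you phrase the target as the $\chi$-eigenspace $E\subset\chi\otimes_\H\Hh$ rather than $\Hom_\H(\chi,\chi\otimes_\H\Hh)$, and you index the Cartan pieces by $\mu^*=-w_0\mu$, whereas the paper avoids this twist by labelling the piece supported on $\K\widehat{e^\mu}\K$ directly by $\mu$ and inducting on $\ell(e^\mu)$ instead of on the dominance order.
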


\begin{proof} The proof relies on the following observation:
a basis for $\chi\otimes_\H \Hh$ is given by  all $1\otimes \t_{\tilde d }$ for $d\in \Dd$ (Proposition \ref{libertepro}). Recall that $\Dd$ contains the set of all $e^\mu$ for $\mu\in  \X^+_*(\T)$. By Lemma \ref{fond} (and using the braid relations \eqref{braid} together with \eqref{addi}), 
$1\otimes \Bb_{F_\chi}^+(\tilde\mu)$ is a sum of $\t_{\widetilde{e^\mu}}$ and of elements in  $\oplus _{d<e^\mu}k\otimes \t_{\tilde d}$.     

We first deduce  from this the injectivity of \eqref{firstiso}  because  a 
basis for $\bar\chi\otimes_{k[\T^0/\T^1]}k[ \tilde \X^+_*(\T)]$  is given by the set of all    $1\otimes \widetilde{e^\mu}$ for $\mu\in \X_*^+(\T)$.
\medskip

Now we  prove the surjectivity.  Denote,   for  $\mu\in \X^+_*(\T)$, by $\Hh[\mu]$ the subspace of  the functions in $\Hh$ with support in $\K \widehat {e^\mu}\K$. 
Then $\Hom_{\H}(\chi, \chi\otimes _\H\Hh)$
 decomposes into the direct sum of all subspaces $\Hom_{\H}(\chi, \chi\otimes _\H\Hh[\mu])$ for $\mu\in\X_*^+(\T)$ and after Corollary \ref{CoroIso} and its proof, each of the spaces $\Hom_{\H}(\chi, \chi\otimes _\H\Hh[\mu])$ is at most one dimensional.

Let $\mu \in \X_*^+(\T)$. By Lemma \ref{photo}ii and the observation at the beginning of this proof,  the image of $1\otimes \Bb_{F_\chi}(\tilde\mu)$
by \eqref{firstiso}  decomposes in  the direct sum of  all  $\Hom_{\H}(\chi, \chi\otimes _\H\Hh[\lambda])$  for $e^\lambda\leq e^\mu$ and it has a non zero component in $\Hom_{\H}(\chi, \chi\otimes _\H\Hh[\mu])$.
We conclude  by induction on $\ell(e^\mu)$ that $\Hom_{\H}(\chi, \chi\otimes _\H\Hh[\mu])$ is contained in the image of \eqref{firstiso} for all $\mu\in \X_*^+(\T)$.
\end{proof}

\bigskip

\begin{rem}
Recall that given  $\lambda\in \X^+_*(\T)$, 
a lift  for $e^\lambda\in\W$ is given by $\lambda(\varpi^{-1})\in \T$  (see \ref{cartan}).  More precisely, the map $\lambda\mapsto \lambda(\varpi^{-1})\,\rm{mod}\, \T^1$ is a splitting for the exact sequence of  abelian groups 
$$0\longrightarrow \T^0/\T^1\longrightarrow \tilde\X_*(\T) \longrightarrow \X_*(\T)\longrightarrow 0$$ and it respects the actions of $\Wf$. 

\end{rem}

By  abuse of notation, we identify below the element $\lambda(\varpi^{-1})\in N_\Gp(\T)$  with image in $\tilde \X^+_*(\T)\subset \tilde\W$.

\bigskip

Let $(\rho, \V)$ be the weight corresponding to the character $\chi$ of $\H$.  As in \ref{subsec:bcn}, we fix
a basis $v$ for $\rho^\I$. 
Composing
\eqref{firstiso} with  the inverse of \eqref{bcn} gives the following.

\begin{theorem}\label{theomy}We have an isomorphism
\begin{equation}\label{mysatake}\bar\chi\otimes_{k[\T^0/\T^1]}k[ \tilde \X^+_*(\T)]\overset{\sim}\longrightarrow \cal H(\Gp, \rho)\end{equation} 
carrying, for  $\lambda\in \X_*^+(\T)$, the element $1\otimes \lambda(\varpi^{-1})$ onto  the $\Gp$-equivariant map determined by
\begin{equation}\begin{array}{cccc}\EuScript T_\lambda:&\ind_\K ^\Gp\rho&\longrightarrow &\ind_\K ^\Gp\rho\cr& \1_{\K, v}&\longmapsto & \1_{\K, v} \Bb_{F_\chi}(\lambda(\varpi^{-1})).\cr
\end{array}\label{Tl}\end{equation}
\end{theorem}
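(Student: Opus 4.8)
The plan is to obtain \eqref{mysatake} by composing two isomorphisms that are already in hand, and then to trace the distinguished generator $1\otimes\lambda(\varpi^{-1})$ through the composition. By Proposition~\ref{prop:main}, the map \eqref{firstiso} is an isomorphism of $k$-algebras
$$\bar\chi\otimes_{k[\T^0/\T^1]}k[\tilde\X^+_*(\T)]\overset{\sim}{\longrightarrow}\Hom_\Hh(\chi\otimes_\H\Hh,\chi\otimes_\H\Hh),$$
and by Corollary~\ref{CoroIso} the restriction map \eqref{bcn}, combined with the right-$\Hh$-equivariant identification $\chi\otimes_\H\Hh\cong(\ind_\K^\Gp\rho)^\I$ of Lemma~\ref{apple}, exhibits $\cal H(\Gp,\rho)$ as the same algebra $\Hom_\Hh(\chi\otimes_\H\Hh,\chi\otimes_\H\Hh)$. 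Composing \eqref{firstiso} with the inverse of this latter isomorphism produces an isomorphism of $k$-algebras $\bar\chi\otimes_{k[\T^0/\T^1]}k[\tilde\X^+_*(\T)]\overset{\sim}{\longrightarrow}\cal H(\Gp,\rho)$, which is the asserted map \eqref{mysatake}.

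It then remains to make \eqref{Tl} explicit by chasing $1\otimes\lambda(\varpi^{-1})$ through these identifications. Recalling from the remark preceding the theorem that $\lambda\mapsto\lambda(\varpi^{-1})\bmod\T^1$ is a $\Wf$-equivariant section of $\tilde\X_*(\T)\to\X_*(\T)$, so that $1\otimes\lambda(\varpi^{-1})$ genuinely lies in $\bar\chi\otimes_{k[\T^0/\T^1]}k[\tilde\X^+_*(\T)]$, the definition of \eqref{firstiso} sends it to the endomorphism $1\otimes1\mapsto 1\otimes\Bb_{F_\chi}^+(\lambda(\varpi^{-1}))$ of $\chi\otimes_\H\Hh$, i.e.\ to right multiplication by $\Bb_{F_\chi}^+(\lambda(\varpi^{-1}))\in\Hh$ (a well-defined operation on $\chi\otimes_\H\Hh$, as \eqref{firstiso} records). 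Transporting along \eqref{isofond0}, which carries $1\otimes1$ to $\1_{\K,v}$ and is right $\Hh$-linear, this becomes the endomorphism of $(\ind_\K^\Gp\rho)^\I$ sending $\1_{\K,v}$ to $\1_{\K,v}\,\Bb_{F_\chi}^+(\lambda(\varpi^{-1}))$. Since $(\rho,\V)$ is irreducible, an element of $\cal H(\Gp,\rho)$ is determined by its value on $\1_{\K,v}$ (see~\S\ref{subsec:bcn}); hence the preimage of this endomorphism under \eqref{bcn} is exactly the $\Gp$-equivariant map $\EuScript T_\lambda$ of \eqref{Tl}.

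The substantive content is entirely carried by Proposition~\ref{prop:main} and Corollary~\ref{CoroIso}, so no genuine obstacle remains here; the only thing requiring care is the bookkeeping --- keeping track of the fact that it is \eqref{bcn}, not \eqref{firstiso}, that is inverted, that the identification $\chi\otimes_\H\Hh\cong(\ind_\K^\Gp\rho)^\I$ intertwines the \emph{right} $\Hh$-module structures (so that right multiplication by an element of $\Hh$ corresponds to the right $\Hh$-action on $(\ind_\K^\Gp\rho)^\I$), and that the composite of these two $k$-algebra isomorphisms of $\Hom_\Hh$-rings is again a $k$-algebra isomorphism, now onto $\cal H(\Gp,\rho)$.
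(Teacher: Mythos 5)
Your proposal is correct and matches the paper's own argument, which is precisely the one-line composition of the isomorphism \eqref{firstiso} of Proposition \ref{prop:main} with the inverse of \eqref{bcn} (an isomorphism by Corollary \ref{CoroIso}, via the identification of Lemma \ref{apple}). Your explicit tracing of $1\otimes\lambda(\varpi^{-1})$ through the right-$\Hh$-equivariant identification \eqref{isofond0} is exactly the intended bookkeeping, so nothing further is needed.
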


\bigskip

\medskip

The map
$ \lambda\in\X_*^+(\T)\rightarrow \lambda(\varpi^{-1}) \,{\rm mod}\,\T^1$
yields an isomorphism
$$k[\X_*^+(\T)]\simeq  \bar\chi\otimes_{k[\T^0/\T^1]}k[ \tilde \X^+_*(\T)]$$  which we compose with  \eqref{mysatake} to obtain 
the isomorphism of $k$-algebras
\begin{equation}\label{invsatake}\begin{array}{cccc}\EuScript T:&k[ \X^+_*(\T)]&\overset{\sim}\longrightarrow& \cal H(\Gp, \rho)\cr \: &\lambda&\longmapsto &\EuScript{T}_{\lambda}\cr\end{array} \end{equation}

The next section  is devoted to  proving that,   in the case where the derived subgroup of $\mathbf G$ is simply connected,
this map is an inverse to the Satake isomorphism constructed  in \cite{satake}.
\section{\label{expli}Explicit computation of the mod $p$ modified Bernstein maps}
\subsection{Support of the  modified Bernstein functions} 
\subsubsection{Preliminary lemmas}

 \begin{lemma}\label{F-pos2prime} 
Let  ${\bf 1}: \T^0/\T^1\rightarrow k^\times$ be the trivial character of $\T^0/\T^1$ and $\epsilon_{\bf 1}\in \Hh$ the corresponding idempotent. For any $w\in\W$, 
we have in $\Hh$ the following equality:
\begin{equation}\label{calc}(-1)^{\ell (w)}\upiota (\epsilon_{\bf 1}\t  _{\tilde w})=\sum_{v\in \W , v   \leq w} \epsilon_{\bf 1}\t _{\tilde v}.\end{equation}

\end{lemma}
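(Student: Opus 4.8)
The plan is to reduce the identity to the analogous statement for the Iwahori--Hecke algebra $\mathcal{H}_\Z$ attached to $\Iw$, where the corresponding formula is classical (it is essentially Lemma~\cite[4.1 or 5.x]{Haines}, the "$R$-polynomial" expansion of $q^{\ell(w)}T_{w^{-1}}^{-1}$ in terms of the standard basis, specialized to recover the combinatorial coefficient $1$ for all $v \le w$ after applying the relevant reduction). The link is the idempotent $\epsilon_{\bf 1}$: for the trivial character ${\bf 1}$ of $\T^0/\T^1$, multiplication by $\epsilon_{\bf 1}$ projects $\Htg \otimes_\Z \R$ onto a subalgebra isomorphic to the Iwahori--Hecke algebra of $\Iw$ over $\R$, under which $\epsilon_{\bf 1}\t_{\tilde w} \mapsto$ (a scalar multiple of) the standard basis element $T_w$, and the quadratic relations \eqref{Q'} become the usual ones $T_{s}^2 = (q-1)T_s + q$. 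Under this identification the involution $\upiota$ restricted to $\epsilon_{\bf 1}\Htg\epsilon_{\bf 1}$ corresponds to the standard involution $T_w \mapsto (-1)^{\ell(w)} q^{\ell(w)} T_{w^{-1}}^{-1}$ of the Iwahori--Hecke algebra, by Remark~\ref{invoNA} and \eqref{involution}.

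Concretely, first I would record that $\epsilon_{\bf 1}\t_{\tilde v}$ for $v \in \W$ form an $\R$-basis of $\epsilon_{\bf 1}(\Htg\otimes_\Z\R)\epsilon_{\bf 1}$ and that $\epsilon_{\bf 1}\t_{\tilde v}\cdot\epsilon_{\bf 1}\t_{\tilde w} = \epsilon_{\bf 1}\t_{\widetilde{vw}}$ whenever $\ell(vw) = \ell(v)+\ell(w)$ (braid relations \eqref{braid}), while for $A \in \Pi_{aff}$ one has $(\epsilon_{\bf 1}\t_{n_A})^2 = \epsilon_{\bf 1}((q-1)\t_{n_A} + q)$ by the first case of \eqref{Q'}, since $s_A\cdot{\bf 1} = {\bf 1}$. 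Thus $\epsilon_{\bf 1}(\Htg\otimes_\Z\R)\epsilon_{\bf 1}$ is the Iwahori--Hecke algebra of $(\W,\SS_{aff},\Omega)$ over $\R$ with parameter $q$. Then I would invoke Lemma~\ref{fond}, or rather its classical Iwahori counterpart in \cite{Haines}: the element $q^{\ell(w)}\tau_{w^{-1}}^{-1}$ expands as $\sum_{v \le w} R_{v,w}(q)\,\tau_v$ where the coefficients are the $R$-polynomials, but here we only need the \emph{leading behaviour} and the support statement --- that the sum is over exactly $\{v : v \le w\}$ --- together with the fact that, after multiplying by $(-1)^{\ell(w)}$ and applying $\upiota$, all coefficients become... precisely $1$. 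The last claim needs a small argument.

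The subtle point --- and the one I expect to be the main obstacle --- is explaining why every coefficient in \eqref{calc} is exactly $1$ rather than a genuine polynomial in $q$. The resolution is that $\upiota$ is an \emph{involution}: applying it to Lemma~\ref{fond} with $v = 1$ gives $(-1)^{\ell(w)}\upiota(\epsilon_{\bf 1}\t_{\tilde w}) = \epsilon_{\bf 1}\t_{\tilde w} + \sum_{x < w} a_x \epsilon_{\bf 1}\t_{\tilde x}$ with $a_x \in \Z$ \emph{a priori}, and one must then pin down the $a_x$. I would argue by induction on $\ell(w)$ using the braid and quadratic relations: write $w = w' s$ with $\ell(w) = \ell(w')+1$, $s = s_A$; then $\epsilon_{\bf 1}\t_{\tilde w} = \epsilon_{\bf 1}\t_{\tilde w'}\t_{n_A}$ (suitably lifted), apply $\upiota$ using Remark~\ref{invoNA} which gives $\upiota(\epsilon_{\bf 1}\t_{n_A}) = \epsilon_{\bf 1}(-\t_{n_A} + \nu_A) = \epsilon_{\bf 1}(-\t_{n_A} + (q-1))$ on the $\epsilon_{\bf 1}$-eigenspace (since $\nu_A = \sum_{t \in \T_A(\fq)}\t_t$ and $\epsilon_{\bf 1}\t_t = \epsilon_{\bf 1}$, so $\epsilon_{\bf 1}\nu_A = (q-1)\epsilon_{\bf 1}$), and then compute
\[
(-1)^{\ell(w)}\upiota(\epsilon_{\bf 1}\t_{\tilde w}) = (-1)^{\ell(w')}\upiota(\epsilon_{\bf 1}\t_{n_A})\,\upiota(\epsilon_{\bf 1}\t_{\tilde w'}) = \epsilon_{\bf 1}(\t_{n_A} - (q-1))\Big(\sum_{v \le w'}\epsilon_{\bf 1}\t_{\tilde v}\Big),
\]
using the inductive hypothesis. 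Expanding the right-hand side using $\t_{n_A}\t_{\tilde v} = \t_{\widetilde{vs}}$ if $\ell(vs) > \ell(v)$ and $\t_{n_A}\t_{\tilde v} = \t_{\widetilde{vs}} + (q-1)\t_{\tilde v}$ if $\ell(vs) < \ell(v)$ (quadratic relation), the terms with the $(q-1)$ factors cancel in pairs against the $-(q-1)\sum_v$ term, and what survives is exactly $\sum_{u \le w}\epsilon_{\bf 1}\t_{\tilde u}$ --- this is precisely the standard "lifting property" of the Bruhat order ($u \le w = w's$ iff $u \le w'$ or $us \le w'$), and the cancellation of the $q$-dependent terms is what forces all coefficients to be $1$. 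I would carry out this cancellation carefully, as it is the heart of the matter; the rest is bookkeeping with \eqref{braid}, \eqref{Q'}, and the fact that $\upiota$ is an anti-automorphism-free algebra automorphism (it is an automorphism, so $\upiota(xy) = \upiota(x)\upiota(y)$, which is why the factors come in the order above with no reversal).
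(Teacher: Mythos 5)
Your overall skeleton --- projecting to the trivial-character component via $\epsilon_{\bf 1}$ and inducting on the length using the braid/quadratic relations together with the lifting property of the Bruhat order (\cite[Lemma 4.3]{Haines}) --- is exactly the skeleton of the paper's proof, but the step you yourself call the heart of the matter does not work as written. First, your quadratic relation in the length-decreasing case is wrong: if $\ell(vs)<\ell(v)$ then, after multiplying by $\epsilon_{\bf 1}$ and using \eqref{Q}, one has $\epsilon_{\bf 1}\t_{\tilde v}\t_{n_A}=\epsilon_{\bf 1}\bigl((q-1)\t_{\tilde v}+q\,\t_{\widetilde{vs}}\bigr)$, not $\t_{\widetilde{vs}}+(q-1)\t_{\tilde v}$; with your relation every such $v$ leaves a spurious $\epsilon_{\bf 1}\t_{\widetilde{vs}}$ (coefficient $1$ where the true coefficient is $q$), so the claimed cancellation does not close and the induction yields extra basis terms. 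There is also a left/right slip: since $\upiota$ is an algebra automorphism, decomposing $\t_{\tilde w}=\t_{\tilde w'}\t_{n_A}$ forces $\upiota(\t_{n_A})$ to sit to the \emph{right} of $\upiota(\t_{\tilde w'})$; written as you have it, the computation produces the answer for $s w'$ rather than $w' s$.

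Second, and more fundamentally, you attribute the fact that every coefficient equals $1$ to a purely combinatorial cancellation, and nowhere use that the coefficient field has characteristic $p$. But \eqref{calc} is false before reduction: already for $w=s$ one has $-\upiota(\epsilon_{\bf 1}\t_{\tilde s})=\epsilon_{\bf 1}\t_{\tilde s}+(1-q)\,\epsilon_{\bf 1}$ (Remark \ref{invoNA} plus $\epsilon_{\bf 1}\nu_A=(q-1)\epsilon_{\bf 1}$), and the second coefficient is $1$ only because $q\cdot 1_k=0$. So no argument can succeed without invoking this. The paper handles it by running your induction over $\overline{\mathbb Z}_p$, proving the congruence $(-1)^{\ell(w)}\upiota(\epsilon_{\bf 1}\t_{\tilde w})\equiv\sum_{v\leq w}(1-q)^{\ell(w)-\ell(v)}\epsilon_{\bf 1}\t_{\tilde v}$ modulo $q\,(\Htg\otimes_\Z\overline{\mathbb Z}_p)$, and only then reducing to $k$, where $q\mapsto 0$ and $(1-q)^{\bullet}\mapsto 1$. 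Alternatively you could run the induction directly in $\Hh$ using $q=0$: then $\epsilon_{\bf 1}\upiota(\t_{n_A})=-\epsilon_{\bf 1}(\t_{n_A}+1)$, the length-decreasing terms contribute $0$ (they do not ``cancel in pairs''; the surviving $-(q-1)\t_{\tilde v}$ for length-increasing $v$ has coefficient $1$ precisely because $q=0$), the length-increasing ones contribute $\epsilon_{\bf 1}(\t_{\widetilde{vs}}+\t_{\tilde v})$, and the lifting property reassembles the sum over $\{u\leq w\}$. Either way, the missing ingredient is the vanishing of $q$ in $k$, together with the corrected quadratic relation.
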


\medskip

\begin{proof}  We consider in this proof  the field $k$ as the residue field of an algebraic closure $\overline {\mathbb Q}_p$ of the field of $p$-adic numbers $\mathbb Q_p$. Let $\overline \Z_p$ be the ring of integers of  $\overline {\mathbb Q}_p$ and $r: \overline \Z_p\rightarrow k$ the reduction.
The ring ${\overline {\mathbb Z}_p}$ satisfies the hypotheses of \ref{subsec:fourier}.  In this proof we identify $q$ with  its image $q.1_{\overline {\mathbb Z}_p}$ in ${\overline {\mathbb Z}_p}$.
We work in the Hecke algebra $\Hh_\Z\otimes _\Z {\overline {\mathbb Z}_p}$ in which we prove   that \begin{equation}\label{tbr}(-1)^{\ell (w)}\upiota (\epsilon_{\bf 1}\t  _{\tilde w}) \: \in \: \sum_{v\in \W , v   \leq w} (1-q)^{\ell (w)-\ell (v)} \epsilon_{\bf 1}\t _{\tilde v} + q \:( \Htg\otimes_\Z {\overline {\mathbb Z}_p}).\end{equation} 

 It is enough to consider the case  $w\in \W_{aff}$ and
we proceed by induction on $\ell (w)$.
Let $w\in \W_{aff} $ and $s\in \SS_{aff} $  such that with $\ell(sw)=\ell(w)+1$.  Applying \cite[Lemma 4.3]{Haines}, we see that
the set of the $v\in \W $ such that $v  \leq sw$ is the disjoint union of 
$$\{v\in \W , v  \leq sv , w\}\:\textrm{and} \: \{v\in \W , sv  \leq v, w  \}. $$
Noticing that ${\epsilon_1}\upiota(\t_{\tilde s})=-\epsilon_{\bf 1}(\t_{\tilde s}+1-q)$, we have, by induction,
$$(-1)^{\ell (\widetilde{sw})}\epsilon_{\bf 1}\upiota (\t   _{\widetilde{sw}})= (-1) ^{\ell (w)}\epsilon_{\bf 1}(\t_{\tilde s }   +1-q)\upiota (\t   _{\tilde w})\in \epsilon_{\bf 1}(\t_{\tilde s}   +1-q)\sum_{v\in \W , v   \leq w} (1-q)^{\ell (w)-\ell (v)} \t   _{\tilde v} + q(\Htg\otimes_\Z {\overline {\mathbb Z}_p})$$ and $(\t_{\tilde s}   +1-q)\underset{v\in \tilde\W , v   \leq w}\sum (1-q)^{\ell (w)-\ell (v)} \epsilon_{\bf 1}\t   _{\tilde v}$ is successively equal to

\begin{align*}& \epsilon_{\bf 1}(\t_{\tilde s}   +1-q)(\sum_{{v   \leq sv, w}} (1-q)^{\ell (w)-\ell (v)} \tg   _{\tilde v}+ \sum_{ {sv  \leq v   \leq w}} (1-q)^{\ell (w)-\ell (v)} \tg   _{\tilde v}) \cr&=\sum_{ v   \leq sv,w} (1-q)^{\ell (w)-\ell (v)} \tg   _{\widetilde{sv}}+ 
\sum_{ v   \leq sv,w} (1-q)^{\ell (w)-\ell (v)+1} \tg   _{\tilde{v}}+\sum_{ {sv  \leq v   \leq w}} q(1-q)^{\ell (w)-\ell (v)} \tg   _{\widetilde{sv}} \cr
&\in \sum_{sv   \leq v,w}(1-q)^{\ell (sw)-\ell (v)} \tg   _{\tilde{v}}+ 
\sum_{v   \leq sv,w} (1-q)^{\ell (sw)-\ell (v)} \tg   _{\tilde{v}}+\sum_{ {sv  \leq v   \leq w}} q(1-q)^{\ell (w)-\ell (v)} \tg   _{\widetilde{sv}}\cr
\end{align*}
which proves the  claim. Applying the reduction   $r: \overline \Z_p\rightarrow k$, we get \eqref{calc} in $\Hh$.
 
\end{proof}

\begin{lemma}\label{extend} Suppose that the derived subgroup of $\mathbf G$ is simply connected.\\
Let $\xi:\T^0/\T^1\rightarrow k^\times$ be  a character fixed by all $s\in \SS$. Then there exists a  character $\upalpha:\Gp\rightarrow k^\times$ that coincides with $\xi^{-1}$ on $\T^0$, such that
$\upalpha(\mu(\varpi))=1$ for all $\mu\in \X_*(\T)$. It satisfies the following equality in $\Hh$, for $\lambda\in \X_*^+(\T)$:
$$\epsilon_\xi\Bb^+_{x_0}(\lambda(\varpi^{-1}))= \epsilon_{\xi}(-1)^{\ell (e^{\lambda})}\upiota (\t  _{\lambda(\varpi^{-1})})=  \sum_{v\in \W , v   \leq e^\lambda} \epsilon_{\xi}\upalpha( {\tilde v})\t _{\tilde v}.$$

\end{lemma}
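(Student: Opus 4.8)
The plan is to obtain both asserted equalities from Lemma~\ref{F-pos2prime} by transporting the trivial-character identity \eqref{calc} through a twisting automorphism of $\Hh$ built from the character $\upalpha$. The leftmost equality needs no work: $\lambda(\varpi^{-1})$ is a lift in $\tilde\X_*^+(\T)$ of the dominant coweight $\lambda$, so \eqref{f:inductionpart} of Lemma~\ref{lemma:induction} gives $\Bb_{x_0}^+(\lambda(\varpi^{-1}))=(-1)^{\ell(e^{\lambda})}\upiota(\t_{\lambda(\varpi^{-1})})$ in $\Hh$, and it remains only to multiply by $\epsilon_\xi$. The real content is thus (a) the construction of $\upalpha$ and (b) the second equality.

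For (a): since $\mathbf G_{der}$ is simply connected and $\Corps$ is an (infinite) local field, the quotient map $\mathbf G\to\mathbf D:=\mathbf G/\mathbf G_{der}$ induces a surjection $\Gp\twoheadrightarrow\mathbf D(\Corps)$ whose kernel $\mathbf G_{der}(\Corps)$ is its own commutator subgroup; hence the $k$-valued characters of $\Gp$ are exactly the inflations of those of $\mathbf D(\Corps)$, and in particular every character of $\Gp$ is trivial on $\mathbf G_{der}(\Corps)$. First I would use that $\mu\mapsto\mu(\varpi)$ realizes $\X_*(\T)$ as a complement to $\T^0$ inside $\Tp$ to define a character $\upalpha_0$ of $\Tp$ by $\upalpha_0|_{\T^0}:=\xi^{-1}$ and $\upalpha_0(\mu(\varpi)):=1$, and then check that $\upalpha_0$ is trivial on $\Tp\cap\mathbf G_{der}(\Corps)=\mathbf T_{der}(\Corps)$. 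Since $\mathbf G_{der}$ is simply connected, $\mathbf T_{der}(\Corps)$ is generated by the images $\coroot(\Corps^\times)=h_\root(\Corps^\times)$ for $\root\in\Pi$; writing $u=\varpi^n u'$ with $u'\in\Oo^\times$ one gets $\upalpha_0(\coroot(u))=\upalpha_0(\coroot(\varpi))^{n}\,\xi^{-1}(h_\root(u'))=\xi^{-1}(h_\root(u'))$, and this is $1$ because the hypothesis that $\xi$ is fixed by every $s\in\SS$ forces $\xi$ to be trivial on $\T_\root(\fq)$ for each simple root (this is exactly the first line of the quadratic relations \eqref{Q'}), while $h_\root(u')$ reduces mod $\T^1$ into $\T_\root(\fq)$. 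Then $\upalpha_0$ descends to the image of $\Tp$ in $\mathbf D(\Corps)$ (a subgroup of finite index, as the cohomology of a torus over a local field is finite) and, $k^\times$ being divisible, extends to a character of $\mathbf D(\Corps)$ whose inflation to $\Gp$ is the desired $\upalpha$, with $\upalpha|_{\T^0}=\xi^{-1}$ and $\upalpha(\mu(\varpi))=1$. I expect the main point of the whole argument to be precisely this propagation of the $\SS$-invariance of $\xi$ from the finite torus $\overline{\mathbf T}(\fq)$ to all of $\mathbf T_{der}(\Corps)$ — the step where simple-connectedness of $\mathbf G_{der}$ is genuinely used; everything else is formal.

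For (b): since $\upalpha$ is trivial on $\mathbf G_{der}(\Corps)$ — in particular $\upalpha(n_{s_A})=\upalpha(h_A(-1))=1$ and $\upalpha|_{\T_A(\fq)}={\bf 1}$ — and is trivial on $\T^1$, the assignment $\t_{\hat w}\mapsto\upalpha(\hat w)\,\t_{\hat w}$ ($w\in\tilde\W$) is a well-defined automorphism $\Psi_\upalpha$ of the $k$-algebra $\Hh$: the braid relations \eqref{braid} are respected because $\upalpha$ is multiplicative, and the quadratic relations \eqref{Q} because $\upalpha(n_A)=\upalpha(h_A(-1))=1$ and $\upalpha$ is trivial on $\T_A(\fq)$. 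Checking on the algebra generators $\t_{n_A}$ and $\t_\omega$ ($\omega$ of length zero), using Remark~\ref{invoNA} together with $\upalpha(n_A)=1$ and $\upalpha|_{\T_A(\fq)}={\bf 1}$, shows that $\Psi_\upalpha$ commutes with $\upiota$; moreover $\Psi_\upalpha(\epsilon_{\bf 1})=\epsilon_\xi$ is immediate from $\upalpha|_{\T^0/\T^1}=\xi^{-1}$ and the definition of the idempotents, and $\Psi_\upalpha(\t_{\lambda(\varpi^{-1})})=\t_{\lambda(\varpi^{-1})}$ because $\upalpha(\lambda(\varpi^{-1}))=\upalpha(\lambda(\varpi))^{-1}=1$. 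Applying $\Psi_\upalpha$ to \eqref{calc} taken with $w=e^{\lambda}$ and the lift $\lambda(\varpi^{-1})$, the left-hand side becomes $(-1)^{\ell(e^{\lambda})}\upiota(\epsilon_\xi\t_{\lambda(\varpi^{-1})})=(-1)^{\ell(e^{\lambda})}\epsilon_\xi\upiota(\t_{\lambda(\varpi^{-1})})$ — using $\upiota(\epsilon_\xi)=\epsilon_\xi$, which holds because $\upiota$ fixes each $\t_t$ with $t\in\T^0/\T^1$ — while the right-hand side becomes $\sum_{v\leq e^{\lambda}}\epsilon_\xi\upalpha(\tilde v)\t_{\tilde v}$, and this is the second equality. (Each term $\epsilon_\xi\upalpha(\tilde v)\t_{\tilde v}$ does not depend on the chosen lift $\tilde v$, since $\epsilon_\xi\t_t=\xi(t)\epsilon_\xi$ and $\upalpha(t)=\xi^{-1}(t)$ for $t\in\T^0/\T^1$.)
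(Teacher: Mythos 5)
Your Hecke-algebra half is exactly the paper's argument: the paper also reduces the leftmost equality to \eqref{f:inductionpart}, introduces the same twisting map $\Psi:\t_g\mapsto\upalpha(g)\t_g$, checks multiplicativity and the commutation with $\upiota$ on the generators $\t_\omega$ ($\ell(\omega)=0$) and $\t_{n_A}$ using the triviality of $\upalpha$ on the image of $\phi_\alpha$, notes $\Psi(\epsilon_{\bf 1})=\epsilon_\xi$, and then transports \eqref{calc} of Lemma \ref{F-pos2prime}. So part (b) of your proposal is fine and coincides with the paper's ``Fact'' and its concluding lines.

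The gap is in your construction of $\upalpha$, at the very step you single out as the crux. Your descent of $\upalpha_0$ from $\Tp$ to $\mathbf{D}(\Corps)$ needs $\xi$ to be trivial on $\T_{\alpha}(\fq)=h_\alpha(\fq^\times)$ for every $\alpha\in\Pi$, and you justify this by the first line of \eqref{Q'}. But \eqref{Q'} only records $\xi(h_\alpha(-1))=1$ under its hypothesis, not triviality on all of $\T_\alpha(\fq)$; and as a bare consequence of ``$\xi$ fixed by every $s\in\SS$'' the triviality is simply not available: $s_\alpha$ acts on $h_\alpha(\fq^\times)$ by inversion, so $s_\alpha$-invariance only forces $(\xi\circ h_\alpha)^2=1$, which for $q$ odd leaves the quadratic character of $\fq^\times$ open. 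Since the vanishing of $\upalpha_0$ on $\mathbf{T}_{der}(\Corps)$ (generated by the coroot images, as you correctly use simple-connectedness for) is exactly what the descent requires, the construction breaks down there. The paper does not argue on the torus at all: it attaches to the pair $\bar\upchi=\xi$, $\Pi_\upchi=\Pi_{\bar\upchi}=\Pi$ the corresponding weight, invokes the proof of \cite[Proposition 5.1]{Parabind} (with Remark \ref{stabi}) to see that under the simple-connectedness hypothesis this weight is a \emph{character} of $\K$, and then \cite[Corollary 3.4]{abe} to extend it to a character $\upalpha$ of $\Gp$ normalized by $\upalpha(\mu(\varpi))=1$. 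In that approach the triviality of $\xi\circ h_\alpha$ comes out as a consequence (any character of $\Gp$, or of $\K$, is trivial on the perfect groups $\phi_\alpha({\rm SL}_2(\Corps))$, hence on $\T_\alpha(\fq)$ after restricting to $\T^0$), rather than being something one can read off from \eqref{Q'}. To make your more elementary torus-theoretic construction work you would first have to establish this one-dimensionality statement (equivalently, the triviality of $\xi\circ h_\alpha$ for all simple $\alpha$) from the intended hypotheses; it does not follow from $\SS$-invariance alone, so as written the proposal has a genuine gap.
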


\begin{rem} \label{alphaindep}For $v\in  \W$ with chosen lift $\tilde v\in \tilde\W$,
the value of $\upalpha( \hat{\tilde v})$ does not depend on the choice of $\hat{\tilde v}$ lifting $\tilde v$ and
we denote it by $\upalpha(\tilde v)$ above.
Furthermore, $\epsilon_{\xi}\upalpha( {\tilde v})\t _{\tilde v}$  does not depend on the choice of the lift $\tilde v$.

\end{rem}

\begin{proof} 
Define a character  $\upchi:\H\rightarrow k^\times$  by   $\bar\upchi:=\xi$ and $\Pi_\upchi=\Pi_{\bar\upchi}= \Pi$ and  consider the associated weight. By the proof of \cite[Proposition 5.1]{Parabind} (see also the remark following Definition 2.4 \emph{loc.cit} and Remark \ref{stabi}),
this weight   is a character $\K\rightarrow k^\times$ and by \cite[Corollary 3.4]{abe}, it extends uniquely to a character $\upalpha$ of $\Gp$ satisfying $\upalpha(\mu(\varpi))=1$ for all $\mu\in \X_*(\T)$. Note that $\upalpha$ coincides with $\bar \upchi^{-1}$ on $\T^0/\T^1$.
\begin{factn}We have an isomorphism of $k$-algebras
$\Psi:\Hh\rightarrow \Hh, \: \t_g\mapsto  \upalpha(g)\:\t_g$  preserving the support of the functions. It sends $\epsilon_{\bf 1}$ onto $\epsilon_{\xi}$ and commutes with the involution $\upiota$

\end{factn}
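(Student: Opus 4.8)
The plan is to verify the three claimed properties of the map $\Psi:\Hh\to\Hh$, $\t_g\mapsto\upalpha(g)\,\t_g$. First I would check that $\Psi$ is a well-defined $k$-algebra homomorphism. Since $\upalpha$ is a character of $\Gp$ that is constant on each double coset $\I g\I$ (because $\upalpha$ is trivial on $\I$, being pro-$p$ with image in the $p'$-group $k^\times$), the assignment $\t_w\mapsto\upalpha(\hat w)\,\t_w$ is independent of the chosen representative and defines a $k$-linear bijection of $\Hh$ preserving the basis up to scalars, hence preserving supports. Multiplicativity: it suffices to check it on generators. On the braid relations $\t_{ww'}=\t_w\t_{w'}$ when $\ell(ww')=\ell(w)+\ell(w')$, this is immediate from $\upalpha(\widehat{ww'})=\upalpha(\hat w)\upalpha(\hat w')$ (again independence of lifts). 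On the quadratic relations \eqref{Q}, one computes $\Psi(\t_{n_A})^2=\upalpha(n_A)^2(\nu_A\t_{n_A}+q\t_{h_A(-1)})$ and compares with $\Psi(\nu_A\t_{n_A}+q\t_{h_A(-1)})=\upalpha(n_A)\nu_A\t_{n_A}+q\,\upalpha(h_A(-1))\t_{h_A(-1)}$; here $\nu_A=\sum_{t\in\T_A(\fq)}\t_t$ and $\Psi(\nu_A)=\nu_A$ since $\upalpha|_{\T^0}$ factors through $\T^0/\T^1=\overline{\mathbf T}(\fq)$ but $\upalpha$ restricted to $\T_A(\fq)$ is trivial as $\T_A(\fq)$ lies in $\K_1\cdot(\text{prime-to-}p)$... more carefully, $\upalpha(n_A)^2=\upalpha(n_A^2)=\upalpha(h_A(-1))$ because $n_{s_A}^2=h_A(-1)$ from the $\mathrm{SL}_2$-computation, so both sides match once one also checks $\Psi$ fixes each $\t_t$ for $t\in\T_A(\fq)$, which holds since $\upalpha(\mu(\varpi))=1$ forces $\upalpha$ to be trivial on the image of $h_\root(\fq^*)$ (Teichm\"uller lifts have trivial valuation).

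Next I would show $\Psi(\epsilon_{\bf 1})=\epsilon_\xi$. By definition $\epsilon_{\bf 1}=\frac{1}{|\overline{\mathbf T}(\fq)|}\sum_{t}\t_t$ and $\epsilon_\xi=\frac{1}{|\overline{\mathbf T}(\fq)|}\sum_t\xi^{-1}(t)\t_t$. Since $\upalpha$ restricted to $\T^0/\T^1$ equals $\bar\upchi^{-1}=\xi^{-1}$ by construction, $\Psi(\t_t)=\upalpha(t)\t_t=\xi^{-1}(t)\t_t$ for $t\in\overline{\mathbf T}(\fq)$, so $\Psi(\epsilon_{\bf 1})=\epsilon_\xi$ term by term.

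Finally I would check that $\Psi$ commutes with $\upiota$. It is enough to check this on the generators $\t_{n_A}$ and $\t_\omega$ for $\omega$ of length zero, since both $\Psi$ and $\upiota$ are (anti)algebra maps and $\Hh$ is generated by these. On $\t_\omega$ (length zero): $\upiota(\t_\omega)=(-1)^0\t_{\omega^{-1}}^*=\t_{\omega^{-1}}$, and $\Psi\upiota(\t_\omega)=\upalpha(\hat\omega^{-1})\t_{\omega^{-1}}=\upalpha(\hat\omega)^{-1}\t_{\omega^{-1}}$ while $\upiota\Psi(\t_\omega)=\upalpha(\hat\omega)\upiota(\t_\omega)=\upalpha(\hat\omega)\t_{\omega^{-1}}$; these agree iff $\upalpha(\hat\omega)^2=1$, which... hmm, this needs care. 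Actually since $\upalpha(\mu(\varpi))=1$ and $\upalpha$ is trivial on $\I$, $\upalpha$ factors through $\W\to\W/\W_{aff}=\Omega$ composed with... wait, $\upalpha$ on $N_\Gp(\T)/\T^0\cong\W$ gives a character of $\W$ trivial on all translations $e^\mu$, hence trivial on $\W_{aff}$ (generated by affine reflections $s_\root e^{\check\root}$, and $\upalpha(n_{s_\root})^2=\upalpha(h_\root(-1))=1$), so $\upalpha$ is a character of the finite abelian group $\Omega$ — but this still need not be $2$-torsion. The resolution: by Remark \ref{invoNA} $\upiota(\t_{n_A})=-\t_{n_A}+\nu_A$, and $\Psi(-\t_{n_A}+\nu_A)=-\upalpha(n_A)\t_{n_A}+\nu_A$, while $\upiota(\Psi(\t_{n_A}))=\upalpha(n_A)(-\t_{n_A}+\nu_A)$; comparing, one needs $\upalpha(n_A)\nu_A=\nu_A$ in $\Hh$, equivalently $\upalpha(n_A)\epsilon_\xi=\epsilon_\xi$ after multiplying... actually $\epsilon_\xi\nu_A$: since $s_A.\xi=\xi$ we get $\epsilon_\xi\nu_A=\epsilon_\xi|\T_A(\fq)|\cdot(\text{something})$; the cleanest route is to note everything is eventually applied after $\epsilon_\xi$, and $\upalpha(n_A)=1$ because $n_A\in\K$ and $\upalpha|_\K=\bar\upchi^{-1}$ which is trivial on $n_A$ as $\bar\upchi$ is a character of $\overline{\mathbf T}(\fq)$ extended to $\K$ via... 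I think the honest statement is that $\upalpha(n_A)=1$ for all $A\in\Pi_{aff}$, since $\upalpha$ restricted to the derived-group part is trivial (derived subgroup simply connected, and the weight $\upchi$ with full $\Pi_\upchi=\Pi$ is a character of $\K$ that must be trivial on all $\Gp_\root$). Granting $\upalpha(n_A)=1$, commutation with $\upiota$ on generators is immediate, completing the proof; the last statement of the Lemma then follows by applying $\Psi$ to the identity of Lemma \ref{F-pos2prime} and using \eqref{f:inductionpart} together with $\Psi(\epsilon_{\bf 1})=\epsilon_\xi$. The main obstacle is precisely pinning down that $\upalpha$ is trivial on the $n_A$ and on $\T_A(\fq)$, which is where the simply-connected hypothesis and the explicit description of the weight from \cite{Parabind} enter.
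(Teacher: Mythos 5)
Your skeleton matches the paper's (well-definedness, multiplicativity, the image of $\epsilon_{\bf 1}$, and commutation with $\upiota$ checked on the generators $\t_\omega$ with $\ell(\omega)=0$ and $\t_{n_A}$ with $A\in\Pi_{aff}$), but two steps go wrong. First, the length-zero case: you compute $\upiota(\t_\omega)=\t_{\omega^{-1}}$ and are then stuck needing $\upalpha(\hat\omega)^2=1$, which you rightly observe has no reason to hold (the length-zero elements of $\tilde\W$ include all of $\T^0/\T^1$, where $\upalpha=\xi^{-1}$ is arbitrary). But the computation is wrong: by definition $\t_{w}^*=q^{\ell(w)}\t_{w}^{-1}$, so for $\ell(\omega)=0$ one has $\t_{\omega^{-1}}^*=\t_{\omega^{-1}}^{-1}=\t_{\omega}$, hence $\upiota(\t_\omega)=\t_\omega$ and commutation of $\Psi$ with $\upiota$ on these elements is immediate, with no condition on $\upalpha(\omega)$. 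As written, your argument leaves this case unresolved (and unresolvable along the line you set up), so it is a genuine gap, albeit one created by the miscomputation rather than by the statement.

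Second, the key input that $\upalpha$ is trivial on $\T_A(\fq)$, on $h_A(-1)$ and on the $n_A$ is never properly established. Your first justification --- that $\upalpha(\mu(\varpi))=1$ forces triviality on $h_\root(\fq^*)$ because Teichm\"uller lifts have trivial valuation --- is a non sequitur: these elements lie in $\T^0$, where $\upalpha$ equals $\xi^{-1}$, and the normalization $\upalpha(\mu(\varpi))=1$ says nothing about values on $\T^0$. Your closing guess (triviality of $\upalpha$ on the \enquote{derived-group part}) is the right idea but is left as a conjecture, and you yourself flag it as the main obstacle. The paper's argument is one line: $\upalpha\circ\phi_\root$ is a group homomorphism ${\rm SL}_2(\Corps)\to k^\times$, and ${\rm SL}_2(\Corps)$ is perfect since $\Corps$ is infinite, so this restriction is trivial; this yields $\upalpha(n_A)=1$, $\upalpha(h_A(-1))=1$ and $\upalpha\vert_{\T_A(\fq)}=1$ simultaneously (for $A=(\root,0)$ and $A=(\root,1)$ alike), and Remark \ref{invoNA} then finishes the commutation on $\t_{n_A}$. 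Note also that your multiplicativity check through the presentation \eqref{braid}, \eqref{Q} is both circular and unnecessary: matching the $\nu_A\t_{n_A}$-terms in the quadratic relation already requires $\upalpha(n_A)=1$, which you have not yet proved at that point; whereas $\Psi$ is simply pointwise multiplication of $\I$-bi-invariant functions by a character of $\Gp$ that is trivial on $\I$ (a pro-$p$ group mapping to $k^\times$), and such a twist is automatically an algebra automorphism preserving supports --- this is what the paper's \enquote{one easily checks} amounts to.
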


\begin{proof}[Proof of the fact]
The image of $\t_g=\1_{\I g\I}$ is independent from the choice of a representative in $\I g\I$.
The image of $\epsilon_{\bf 1}$ is clearly $\epsilon_{\xi}$. One easily checks that $\Psi$ respects the product.
Now in order to check that $\Psi$ commutes with the involution $\upiota$, it is enough to show that
$\Psi$ and $\upiota \Psi \upiota$ coincide on elements of the form $\t_u$ with $u\in \tilde\W$ such that $\ell(u)=0$ and $u=n_A$ for $A\in \Pi_{aff}$. For the former elements, the claim is clear since $\upiota$ fixes such $\t_u$ when $\ell(u)=0$. Now let $A=(\alpha, r)\in \Pi_{aff}$.  We consider the morphism
$\phi_\root: {\rm SL}_2(\Corps)\rightarrow \mathbf G(\Corps)$ as in \ref{rootsubgroup}.
Since $\Corps$ is infinite, 
the restriction of $\upalpha$ to the image of $\phi_\root$ is trivial.
Now by Remark \ref{invoNA},  we have
 $\upiota\Psi(\upiota(\t_{n_A}))=\t_{n_A}.$

\end{proof}

We deduce from this (and using \eqref{f:inductionpart}) that  $$\Psi(\epsilon_{\bf 1}\Bb^+_{x_0}(\lambda(\varpi^{-1})))=\epsilon_{\xi}(-1)^{\ell (e^{\lambda})}\Psi(\upiota (\t  _{\lambda(\varpi^{-1})}))=\epsilon_{\xi}\Bb^+_{x_0}(\lambda(\varpi^{-1})).$$ Conclude using Lemma \ref{F-pos2prime}.

\end{proof}

\subsubsection{} For  $\chi: \H\rightarrow k^\times$, we consider  the associated facet  $F_\chi$  as in \ref{param} and    $\M_\chi$ the  corresponding standard Levi subgroup as in \ref{Levi}.

\begin{pro}  Let $\chi: \H\rightarrow k^\times$.  There is a character  $\upalpha_{\chi}: \M_{F_\chi}\rightarrow k^\times$  such that\\
a)   $\upalpha_{\chi}$ coincides with $\bar \chi^{-1}$ on $\T^0$ and satisfies $\upalpha_{\chi}(\mu(\varpi))=1$ for all $\mu\in \X_*(\T)$,\\b) in $\chi\otimes_{\H}\Hh$ we have,  for
 $\lambda\in \X^+_*(\T)$,   \begin{equation} \label{almostthere}\epsilon_{\bar\chi}\otimes \Bb_{F_\chi}(\lambda(\varpi^{-1}))=\sum_{d\in \W_{F_\chi}\cap \Dd_{F_\chi},  d   \underset{F}\leq e^\lambda}  \upalpha_{\chi}({\tilde d}) \: \epsilon_{\bar\chi}\otimes \t _{\tilde d}  \end{equation}where $\tilde d$   denotes any lift in $\tilde \W_{F_\chi}$  for $d\in \W_{F_\chi}$. 
\end{pro}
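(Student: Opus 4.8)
The plan is to reduce the statement to Lemma \ref{extend}, applied not to $\Gp$ but to the Levi subgroup $\M_{F_\chi}$, and then to transport the resulting identity first into $\Hh$ through the map $j^+_{F_\chi}$ and finally into $\chi\otimes_\H\Hh$. Before doing so I would check that Lemma \ref{extend} does apply to $\M_{F_\chi}$ and to the character $\xi:=\bar\chi|_{\T^0/\T^1}$: since the derived subgroup of $\mathbf G$ is simply connected, so is the derived subgroup of the Levi $\mathbf M_{F_\chi}$ (a standard fact about Levi subgroups), so the hypothesis of Lemma \ref{extend} holds for $\M_{F_\chi}$, with its hyperspecial point $x_{F_\chi}$ (Remark \ref{ParabParah}), its length $\ell_{F_\chi}$, its Bruhat order $\underset{F}\leq$ and its Iwahori--Hecke algebra $\Hh(\M_{F_\chi})$; moreover the simple reflections of $\M_{F_\chi}$ are the $s_\root$ with $\root\in\Pi_{F_\chi}=\Pi_\chi$ (see \ref{LeviDefi}, Remark \ref{stabi}, \S\ref{characters}), and since $\Pi_\chi\subseteq\Pi_{\bar\chi}=\{\root\in\Pi:s_\root\bar\chi=\bar\chi\}$ the character $\xi$ is fixed by every simple reflection of $\M_{F_\chi}$, as required.

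Lemma \ref{extend} applied to $\M_{F_\chi}$ then produces a character $\upalpha_\chi\colon\M_{F_\chi}\rightarrow k^\times$ which restricts to $\bar\chi^{-1}$ on $\T^0$ and satisfies $\upalpha_\chi(\mu(\varpi))=1$ for all $\mu\in\X_*(\T)$ --- this is assertion a) --- together with the identity, valid in $\Hh(\M_{F_\chi})$ for $\lambda\in\X^+_*(\T)$,
\[
\epsilon_{\bar\chi}\,(-1)^{\ell_{F_\chi}(e^\lambda)}\,\upiota^{F_\chi}\!\bigl(\t^{F_\chi}_{\lambda(\varpi^{-1})}\bigr)=\sum_{v\in\W_{F_\chi},\ v\,\underset{F}\leq\, e^\lambda}\epsilon_{\bar\chi}\,\upalpha_\chi(\tilde v)\,\t^{F_\chi}_{\tilde v}\ .
\]
I would then apply $j^+_{F_\chi}$ (\S\ref{HeckeRingF}): by Lemma \ref{F-positive}ii every $v\in\W_{F_\chi}$ with $v\underset{F}\leq e^\lambda$ is $F$-positive, as is $\lambda(\varpi^{-1})$, and the idempotent $\epsilon_{\bar\chi}$ together with each $\epsilon_{\bar\chi}\t^{F_\chi}_{\tilde v}$ lies in the positive subring $\tilde{\rm H}_\Z(\M_{F_\chi})_+\otimes k$ (the torus elements having length zero); since $j^+_{F_\chi}$ is a ring homomorphism on the positive subring with $\t^{F_\chi}_t\mapsto\t_t$ and $\t^{F_\chi}_{\tilde v}\mapsto\t_{\tilde v}$, and since $j^+_{F_\chi}\bigl((-1)^{\ell_{F_\chi}(e^\lambda)}\upiota^{F_\chi}(\t^{F_\chi}_{\lambda(\varpi^{-1})})\bigr)=\Bb^+_{F_\chi}(\lambda(\varpi^{-1}))$ by \eqref{f:induction}, I obtain in $\Hh$ the identity $\epsilon_{\bar\chi}\,\Bb^+_{F_\chi}(\lambda(\varpi^{-1}))=\sum_{v\in\W_{F_\chi},\ v\underset{F}\leq e^\lambda}\upalpha_\chi(\tilde v)\,\epsilon_{\bar\chi}\,\t_{\tilde v}$.

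Finally I would tensor with $\chi$ over $\H$. Applying $1\otimes(\,\cdot\,)$ and using $\chi(\epsilon_{\bar\chi})=1$ turns the left-hand side into $\epsilon_{\bar\chi}\otimes\Bb^+_{F_\chi}(\lambda(\varpi^{-1}))$. On the right, I decompose each $v\in\W_{F_\chi}$ as $v=ud$ with $u\in\Wf_\chi$ and $d\in\W_{F_\chi}\cap\Dd_{F_\chi}$ the distinguished representative (Remark \ref{rema:DF}); since $\Dd_{F_\chi}\subseteq\Dd$ (\S\ref{parahorics}), Proposition \ref{prop:D}i applied to $\Gp$ gives $\ell(v)=\ell(u)+\ell(d)$, hence $\t_{\tilde v}=\t_{\tilde u}\t_{\tilde d}$ in $\Hh$ by \eqref{braid} for compatible lifts. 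As $\t_{\tilde u}\in\H$, the $v$-term equals $\upalpha_\chi(\tilde u)\upalpha_\chi(\tilde d)\,\chi(\epsilon_{\bar\chi}\t_{\tilde u})\,(1\otimes\t_{\tilde d})$; taking a reduced expression for $u$ in the $s_\root$, $\root\in\Pi_\chi$, and using that $\chi(\t_{n_\root})=0$ for every $\root\in\Pi_\chi$ by the very definition of $\Pi_\chi$ (\S\ref{characters}), this term vanishes unless $u=1$, i.e.\ unless $v=d\in\W_{F_\chi}\cap\Dd_{F_\chi}$. The surviving terms are therefore indexed exactly by $d\in\W_{F_\chi}\cap\Dd_{F_\chi}$ with $d\underset{F}\leq e^\lambda$, each with coefficient $\upalpha_\chi(\tilde d)$ (independent of the lift $\tilde d$ by the analogue of Remark \ref{alphaindep}), which is precisely \eqref{almostthere}, i.e.\ assertion b).

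The step I expect to be the main obstacle is organisational rather than conceptual: one has to check scrupulously that every element transported by $j^+_{F_\chi}$ genuinely lies in its positive subring --- which is exactly where Lemma \ref{F-positive}ii enters --- and to keep straight the several length functions and Bruhat orders in play ($\ell,\leq$ on $\W$ versus $\ell_{F_\chi},\underset{F}\leq$ on $\W_{F_\chi}$), as well as verify that the scalars $\upalpha_\chi(\tilde v)$ and the products $\epsilon_{\bar\chi}\t_{\tilde v}$ do not depend on the chosen lifts. Beyond instantiating Lemma \ref{extend} for the Levi and reconciling its index set $\{v\in\W_{F_\chi}:v\underset{F}\leq e^\lambda\}$ with $\W_{F_\chi}\cap\Dd_{F_\chi}$, no new idea should be needed.
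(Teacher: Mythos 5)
Your proposal is correct and follows essentially the same route as the paper: apply Lemma \ref{extend} to the Levi $\M_{F_\chi}$ (using that its derived subgroup is again simply connected and that $\bar\chi$ is fixed by the $s_\root$, $\root\in\Pi_\chi$), transport the resulting identity into $\Hh$ via $j^+_{F_\chi}$ and \eqref{f:induction}, and then project to $\chi\otimes_\H\Hh$ using $\chi(\t_{\tilde w})=0$ for nontrivial $w\in\Wf_{F_\chi}$ together with the decomposition of $\W_{F_\chi}$ into $\Wf_{F_\chi}\cdot(\W_{F_\chi}\cap\Dd_{F_\chi})$. The extra checks you spell out (F-positivity via Lemma \ref{F-positive}ii, length additivity via Proposition \ref{prop:D}i, independence of lifts) are exactly the points the paper leaves implicit.
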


\begin{proof}  In this proof we write $F$ instead of $F_\chi$. Let $\lambda\in\X_*^+(\T)$.
Consider the restriction to $\M_{F}\cap \K$ of the weight $\rho$  associated to $\chi$ 
 and the corresponding restriction of $\chi$ to the finite Hecke algebra $\H_{x_{F}}(\M_{F})$ (see  Remark \ref{ParabParah} for the definition of this subalgebra of $\Hh(\M_{F})$ attached to the maximal compact subgroup $\M_{F}\cap \K$ of $\M_{F}$).   It satisfies the hypotheses of Lemma \ref{extend}, where $\M_{F}$ and its attached root system play the role of $\Gp$.  Note that under our hypothesis for $\Gp$, the derived subgroup of $\M_{F}$ is equally simply connected.
Therefore, there exists a character $\upalpha_\chi:\M_{F}\rightarrow k^\times$ that coincides with $\bar\chi^{-1}$ on $\T^0$, such that
$\upalpha_\chi(\mu(\varpi))=1$ for all $\mu\in \X_*(\T)$ and satisfying the following equality in $\Hh(\M_{F})$ (see \eqref{f:inductionpart} applied to the Levi $\M_{F}$)
$$\epsilon_{\bar\chi}\: (-1)^{\ell_{F}(e^\lambda)}\upiota^{F}(\t^{F}_{\lambda(\varpi^{-1})})= \sum_{v\in \W_{F} , v  \underset{{F}} \leq e^\lambda} \epsilon_{\bar\chi}\upalpha_{\chi}( {\tilde v})\t ^{F}_{\tilde v}.$$ Now 
applying   \eqref{f:induction} to the facet $F$, we have in $\Hh$,
$$\epsilon_{\bar\chi} \Bb_{F}^+(\lambda(\varpi^{-1}))= \sum_{v\in \W_{F} , v  \underset{F} \leq e^\lambda} \epsilon_{\bar\chi}\upalpha_{\chi}( {\tilde v})\t _{\tilde v}.$$ 
Before projecting this relation onto $\chi\otimes _{\H}\Hh$, recall that $\chi(\t_{\tilde w})=0$ for all $w\in \Wf_{F}$.
By definition of $\Dd_{F}$ (see also Remark \ref{rema:DF}), we therefore have, in $\chi\otimes _{\H}\Hh$,
$$\epsilon_{\bar\chi}\otimes \Bb_{F}^+(\lambda(\varpi^{-1}))= \sum_{d\in \W_{F}\cap \Dd_{F} , \: v  \underset{{F}} \leq e^\lambda} \upalpha_{\chi}( {\tilde d})\: \epsilon_{\bar\chi}\otimes \t _{\tilde d}.$$

\end{proof}
\subsection{\label{invH}An inverse to the mod $p$ Satake transform of \cite{satake}}
Let $(\rho, \V)$ be a weight,  $\chi: \H\rightarrow k^\times$  the corresponding character  and  $F_\chi$ the facet defined as in \ref{param}. Consider the isomorphism
  \begin{equation}\label{MapHerzig}\EuScript S: \cal H(\Gp, \rho)\overset\sim{\longrightarrow}k[\X_*^+(\T)]\end{equation} 
 constructed in  \cite{satake}
 (see Remark \ref{rema:normalization}).    For $\lambda\in \X_*^+(\T)$, denote by $f_\lambda$ the  function in  $\cal H(\Gp, \rho)$  with support in $\K\lambda(\varpi^{-1})\K$ and value at $\lambda(\varpi^{-1})$ equal to the $k$-linear projection $\V\rightarrow \V$ defined by \cite[(2.8)]{satake}. Note  that this projection coincides with the identity on $\V^\I$ (step 3 of the proof of Theorem 1.2 \emph{loc.cit}). Any function in $\cal H(\Gp, \rho)$  with support in $\K\lambda(\varpi^{-1})\K$  is a $k$-multiple of $f_\lambda$. 
The element  $T_{f_\lambda}(\1_{\K, v})$ defined by $g\mapsto f_\lambda(g)v$ is the unique element in $ (\ind_{\K}^\Gp\rho)^\I$ with support in 
 $\K\lambda(\varpi^{-1})\K$ and value $v$ at $\lambda(\varpi^{-1})$  which is an eigenvector for the action of $\H$ and the character $\chi$
(See Corollary \ref{CoroIso} and Remark \ref{dim1}).

\medskip
Recall that the isomorphism $\EuScript T:k[\X_*^+(\T)] \overset\sim{\longrightarrow}\cal H(\Gp, \rho)$ was defined in \eqref{invsatake} and that both $\EuScript S$ and $\EuScript T$ are defined with no further condition on the derived subgroup of $\mathbf G$.

\begin{theorem} Suppose that the derived subgroup of $\mathbf G$ is  simply connected.\\
\label{invherzig0}
\begin{itemize}
\item[i.] For   $\lambda\in  \X^+_*(\T)$ we have
\begin{equation}\EuScript T_\lambda=\sum_{\mu\underset{F_\chi}\preceq \lambda} f_\mu.\label{invherzig}\end{equation} 
\item[ii.] 
 \label{coro:invherzig}
The map $\EuScript T$ is an inverse for  $\EuScript S$.
\end{itemize}

\end{theorem}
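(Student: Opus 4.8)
The plan is to prove part i.\ first, since part ii.\ follows immediately: by the last displayed remark in \ref{subsec:satake}, $\EuScript S$ carries $f_\mu$ to a unique basis element indexed by $\mu$ (up to the triangularity expressed in \cite[Thm 1.2]{satake}), and formula \eqref{invherzig} exhibits $\EuScript T_\lambda$ as a unitriangular combination (with respect to $\underset{F_\chi}\preceq$) of the $f_\mu$; matching this against Herzig's triangular description of $\EuScript S$ shows $\EuScript S\circ \EuScript T=\mathrm{id}$, and since both are isomorphisms between the same pair of algebras, $\EuScript T$ is a two-sided inverse. So the real content is i.

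For part i., I would start from the definition of $\EuScript T_\lambda$ in \eqref{Tl}: it is the Hecke operator corresponding, under the isomorphism \eqref{bcn1}, to $1\otimes \Bb_{F_\chi}^+(\lambda(\varpi^{-1}))\in \Hom_\Hh(\chi\otimes_\H\Hh,\chi\otimes_\H\Hh)$. The key input is \eqref{almostthere}, which expresses $\epsilon_{\bar\chi}\otimes \Bb_{F_\chi}^+(\lambda(\varpi^{-1}))$ in $\chi\otimes_\H\Hh$ as the sum $\sum_{d}\upalpha_\chi(\tilde d)\,\epsilon_{\bar\chi}\otimes \t_{\tilde d}$ over $d\in \W_{F_\chi}\cap\Dd_{F_\chi}$ with $d\underset{F_\chi}\leq e^\lambda$. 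By Lemma \ref{F-positive}ii, these $d$ are partitioned by the double cosets $\Wf_{F_\chi}e^\mu\Wf_{F_\chi}$ for $\mu\in\X_*^+(\T)$ with $\mu\underset{F_\chi}\preceq\lambda$, and by \eqref{connu} (applied to the root system attached to $F_\chi$) together with Lemma \ref{photo}ii one gets, for each such $\mu$, exactly the set $\Dd_{F_\chi}\cap e^\mu \Wf_{F_\chi}$ of minimal-length representatives. So I would regroup the sum in \eqref{almostthere} as $\sum_{\mu\underset{F_\chi}\preceq\lambda}\big(\sum_{d\in \Dd_{F_\chi}\cap e^\mu\Wf_{F_\chi}}\upalpha_\chi(\tilde d)\,\epsilon_{\bar\chi}\otimes\t_{\tilde d}\big)$ and observe that the Hecke operator attached to the inner sum has support exactly $\K\mu(\varpi^{-1})\K$ (using the Cartan decomposition \ref{cartan} and that $\Dd_{F_\chi}\cap e^\mu\Wf_{F_\chi}$ indexes the part of a basis of $\chi\otimes_\H\Hh$ sitting over that double coset, cf.\ Remark \ref{support} and the decomposition $\Hh=\oplus_\mu\Hh[\mu]$ from the proof of Proposition \ref{prop:main}). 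By Corollary \ref{CoroIso}ii and Remark \ref{dim1}, each such piece is one-dimensional and equals $f_\mu$ up to scalar; the normalization of $f_\mu$ (its value at $\mu(\varpi^{-1})$ being the projection of \cite{satake}, which is the identity on $\V^\I$) matches the fact that $\upalpha_\chi$ coincides with $\bar\chi^{-1}$ on $\T^0$ and takes value $1$ on $\mu(\varpi)$, so the scalar is $1$. This yields \eqref{invherzig}.

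The main obstacle, I expect, is the bookkeeping in the last step: verifying that the coefficient of the "leading" term $\t_{\widetilde{e^\mu}}$ (equivalently, the value of $T_{f_\lambda}(\1_{\K,v})$ at $\mu(\varpi^{-1})$) is precisely $1$ and not some other unit of $k$, i.e.\ reconciling my normalization of $\Bb_{F_\chi}^+$ and $\upalpha_\chi$ with Herzig's normalization of $f_\mu$. This requires carefully tracking the isomorphism \eqref{isofond0} (which sends $1\otimes 1\mapsto \1_{\K,v}$), the identification of $\upalpha_\chi(\widetilde{e^\mu})=\upalpha_\chi(\mu(\varpi^{-1}))=1$, and the fact (quoted from step 3 of the proof of \cite[Thm 1.2]{satake}) that Herzig's projection acts as the identity on $\rho^\I$. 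A secondary point is to make sure the regrouping of indices in \eqref{almostthere} really does give all of $\Dd_{F_\chi}\cap e^\mu\Wf_{F_\chi}$ for each $\mu\underset{F_\chi}\preceq\lambda$ and nothing more — but this is exactly the content of \eqref{connu} and Lemma \ref{photo}, so it is routine once set up correctly. Part ii.\ is then immediate as explained above.
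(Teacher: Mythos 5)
Your part i is in substance the paper's own proof: both arguments start from \eqref{almostthere}, use Lemma \ref{F-positive}ii to bound the support, use $\upalpha_\chi(\mu(\varpi^{-1}))=1$ together with the fact that Herzig's projection is the identity on $\V^\I$ to pin down the value at $\mu(\varpi^{-1})$, and then invoke the one-dimensionality of the support pieces (Corollary \ref{CoroIso}ii, Remark \ref{dim1}) to identify each piece with $f_\mu$. Your regrouping of the index set by the double cosets $\Wf_{F_\chi}e^\mu\Wf_{F_\chi}$ via \eqref{connu} versus the paper's direct evaluation at the elements $e^\mu\in\Dd_{F_\chi}$ (Remark \ref{rema:D}) is only a difference of bookkeeping, so this part is fine.

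Part ii, however, has a genuine gap. From the unitriangularity of \eqref{invherzig} together with the triangularity of $\EuScript S$ asserted in \cite[Thm 1.2]{satake} you can only conclude that $\EuScript S\circ\EuScript T$ is unitriangular, hence bijective -- not that it is the identity: a composition of two unitriangular maps is unitriangular, and nothing in your argument forces the lower-order terms to cancel. Concretely, if $\EuScript S(f_\mu)$ happened to equal the basis element $\mu$ on the nose, your reasoning would give $\EuScript S(\EuScript T_\lambda)=\sum_{\mu\underset{F_\chi}\preceq\lambda}\mu\neq\lambda$. What is actually needed is the exact identity $\sum_{\mu\underset{F_\chi}\preceq\lambda}\EuScript S(f_\mu)=\lambda$, i.e. the explicit computation of the Satake transform of the $f_\mu$ under the simply-connected hypothesis; this is \cite[Proposition 5.1]{Parabind}, and it is precisely the input the paper invokes to deduce ii from i. Note also that the triangularity in \cite[Thm 1.2]{satake} is with respect to the dominance order attached to the full root system, whereas \eqref{invherzig} is triangular with respect to $\underset{F_\chi}\preceq$, so the two triangular structures you propose to \enquote{match} are not even relative to the same partial order. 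Without the exact formula from \cite{Parabind}, ii does not follow from i.
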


\begin{rem} In particular, the matrix coefficients of $\EuScript T$ in the bases $\{\lambda\}_{\lambda\in \X_*^+(\T)}$ and $\{f_\lambda\}_{\lambda\in \X_*^+(\T)}$    depend only on the facet $F_\chi$, and not on $\chi$ itself.

\end{rem}

\begin{proof}  In this proof, we write $F$ for $F_\chi$.
For i, we have to show that $\EuScript T_\lambda$ has support the  set of  all double cosets $\K \mu(\varpi^{-1})\K$ for  $\mu\underset{F}\preceq \lambda$ and that for  $v\in \rho^\I$ and  such a $\mu$, the value of $\EuScript T_\lambda(\1_{\K, v})$ at $ \mu(\varpi^{-1})$  is $v$.  By
 \eqref{almostthere},  we have
 $$\EuScript T_\lambda(\1_{\K, v})=  \sum_{d\in \W_F\cap \Dd_F,  d   \underset{F}\leq e^\lambda}  \upalpha_{\chi}(\tilde d)\1_{\K, v}\t _{\tilde d}$$  and by Lemma \ref{F-positive}ii, this element  has support in 
 the expected set. Furthermore, by the proof of Proposition \ref{apple}, it has value $\upalpha_{\chi}(\tilde d)v$ at $\hat {\tilde d} $ for all
 $d\in \W_F\cap \Dd_F$,   $d   \underset{F}\leq e^\lambda$.
 Now recall that any $\mu\in \X_*^+(\T)$ seen in $\W$  is an element in $ \Dd_F$ (Remark \ref{rema:D}), that $\mu(\varpi^{-1})$ is a lift in $\Gp$ for $\mu$ and that $\upalpha_\chi(\mu(\varpi^{-1}))=1$. It proves that 
 $\EuScript T_\lambda(\1_{\K, v})$ has value $v$ at $\mu(\varpi^{-1})$ for $\mu\underset{F}\preceq \lambda$: we obtain the formula \eqref{invherzig}.
 Finally,  let $\lambda\in  \X^+_*(\T)$.  Under the hypothesis that the derived subgroup of $\mathbf G$ is  simply connected,  $\sum_{\mu\underset{F}\preceq \lambda} \EuScript S( f_\mu)$ is equal to the element $\lambda$ seen in $k[\X_*^+(\T)]$  \cite[Proposition 5.1]{Parabind}. It proves ii.

\end{proof}

\bibliographystyle{smfplain}

\end{document}